\newcommand{\PP}{\mathbb{P}}
\newcommand{\ZZ}{\mathbb{Z}}
\newcommand{\RR}{\mathbb{R}}
\newcommand{\CC}{\mathbb{C}}
\newcommand{\CP}{\mathbb{C}P}
\newcommand{\RP}{\mathbb{R}P}
\newcommand{\CCC}{\mathscr{C}}
\renewcommand{\L}{\mathcal{L}}
\newcommand{\X}{\mathcal{X}}
\newcommand{\C}{\mathcal{C}}
\newcommand{\Y}{\mathcal{Y}}
\renewcommand{\H}{\mathcal{H}}
\renewcommand{\O}{\mathcal{O}}
\renewcommand{\epsilon}{\varepsilon}
\newtheorem{theo}{Theorem}[section]
\newtheorem*{theom}{Theorem}
\newtheorem{prop}[theo]{Proposition}
\newtheorem*{propo}{Proposition}
\newtheorem{coro}[theo]{Corollary}
\newtheorem{lem}[theo]{Lemma}
\theoremstyle{definition}
\newtheorem{defi}[theo]{Definition}
\newtheorem{prob}[theo]{Problem}
\theoremstyle{remark}
\newtheorem{remark}[theo]{Remark}
\newenvironment{rem}[1]{
    \begin{remark}#1}{
    \xqed{\blacklozenge}\end{remark}
}
\theoremstyle{remark}
\newtheorem{example}[theo]{Example}
\newenvironment{expl}[1]{
    \begin{example}#1}{
    \xqed{\lozenge}\end{example}
}
\newcommand{\xqed}[1]{
    \leavevmode\unskip\penalty9999 \hbox{}\nobreak\hfill
    \quad\hbox{\ensuremath{#1}}}
\DeclarePairedDelimiter{\floor}{\lfloor}{\rfloor}
\title{Bitangents of real algebraic curves: signed count and constructions}
\author{Thomas Blomme}
\email{thomas.blomme@unige.ch}
\address{Universit\'e de Neuch\^atel, avenue \'Emile Argan 11, Neuch\^atel 2000, Suisse}
\author{Erwan Brugall\'e}
\email{erwan.brugalle@math.cnrs.fr}
\address{Nantes Universit\'e, Laboratoire de
  Math\'ematiques Jean Leray, 2 rue de la Houssini\`ere, F-44322 Nantes Cedex 3,
France}
\author{Cristhian Garay}
\email{cristhian.garay@cimat.mx}
\address{Centro de Investigaci\'on en Matem\'aticas, A.C. (CIMAT),
Jalisco S/N, Col. Valenciana CP. 36023 Guanajuato, Gto, M\'exico.}
\keywords{Enumerative geometry, real algebraic geometry, real bitangents\\}
\begin{document}
 
%\renewcommand{\qedsymbol}{\filledbox}
%Good resources for looking up how to do stuff:
%Binary operators: http://www.access2science.com/latex/Binary.html
%General help: http://en.wikibooks.org/wiki/LaTeX/Mathematics
%Or just google stuff

\begin{abstract}
  We study real bitangents of real algebraic plane curves from two
  perspectives. We first show that there exists a signed count of such
  bitangents that only depends on the real topological type of the
  curve. From this follows that a generic real algebraic curve of
  even degree $d$ has at least $\frac{d(d-2)}{2}$ real
  bitangents. 
  Next we explain how to locate (real) bitangents of a (real)
  perturbation of a multiple (real) conic in $\CP^2$.
As main applications, we exhibit a real sextic with a total of $318$ real
bitangents and 6 complex ones, and  perform asymptotical
constructions that give the best, to our knowledge, number of real
bitangents of real algebraic plane curves of a given degree.
\end{abstract}

\maketitle

\tableofcontents

\setstcolor{red}

An algebraic curve in $\CP^2$ is said to be \emph{generic} if it is non-singular and if its dual curve has only nodes and cusps as its singularities. Given a real plane algebraic curve $C$ in $\CP^2$, i.e. defined by a polynomial with real coefficients, we define the \emph{real scheme} of $C$ as the topological type of the pair $(\RP^2,\RR C)$.

\section{Results}

\subsection{A brief historical overview}
Plücker proved in 1834  \cite{Plu34} that a generic complex algebraic plane curve of degree $d$ in  $\CC P^2$ has exactly

\begin{enumerate}
\item $i_\CC(d)=3d(d-2)$ inflection points;
\item  $t_\CC(d)=\frac{1}2 d(d-2)(d-3)(d+3)$ bitangents.
\end{enumerate}

For real curves the situation quickly becomes quite intricate regarding their  real  bitangents and real inflection points. Curves of degree 4 constitute the first non-trivial situation for bitangents, and Plücker noticed in \cite{Plu39} the following interesting features:

  \begin{itemize}[label=$\ast$]
  \item    there exists  plane quartics  whose $t_\CC(4)=28$ bitangents are all real;
\item  not all possible even values between $0$ and $28$ can be realized as the number of real bitangents of a general real quartic curve. 
  \end{itemize}

Based on these observations, Zeuthen  extensively studied in \cite{Zeu74}  real plane quartics and their real bitangents. He proved in particular that the number of real bitangents of such curves is determined by its real scheme, as indicated on Figure \ref{fig:quartic}, see also \cite[Section 7]{GrHa83}.

  \begin{figure}[!htb]
\centering
\begin{tabular}{ccccccccccc}
  \includegraphics[height=2cm, angle=0]{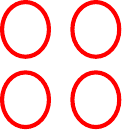}
  & \hspace{1ex} &
  \includegraphics[height=2cm, angle=0]{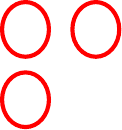}
  & \hspace{1ex} &
  \includegraphics[height=2cm, angle=0]{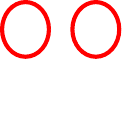}
  \\   28 real bitangents& & 16  real bitangents && 8  real bitangents
   \\\\ 
  \includegraphics[height=2cm, angle=0]{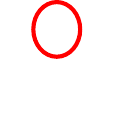}
  & \hspace{1ex} &
  \includegraphics[height=2cm, angle=0]{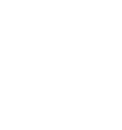}
  & \hspace{1ex} &
  \includegraphics[height=2cm, angle=0]{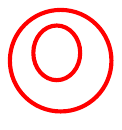}
  \\ 4  real bitangents && 4 real bitangents && 4  real bitangents  
\end{tabular}
\caption{Number of real bitangents to real quartic curves in terms of their real scheme.}\label{fig:quartic}
  \end{figure}
 
  Following Zeuthen's work, Klein proved   in \cite{Kle2}  that a generic  real plane algebraic  curve $C$ of degree $d$   satisfies the  remarkable formula
\begin{equation}\label{eq:klein}
i+2t_0=d(d-2),
\end{equation}
where $t_l$ is the number of real bitangents of $C$ that are tangent to $C$ at $l$ real points ($l=0$ or $l=2$), and  $i$ denotes the number of real inflection points of $C$. Note that even if the quantities $t_2,t_0,$ and $i$  depend on the curve $C$,  we do not indicate this dependency in order to lighten up notations. Klein's Formula \eqref{eq:klein} was later generalized by Schuh \cite{Schuh} to singular curves. Klein's proof  has eventually not been considered as entirely rigorous, and  several complete proofs (for modern standards) of the Klein Formula have been given since the 80's, see \cite{Viro-Euler,Wal96,Ron1}. 

As  a consequence of Klein's Formula \eqref{eq:klein},  one obtains that
\begin{itemize}[label=$\ast$]
\item $i\le d(d-2)$, i.e. the number of real inflection points is no more than  one third of the number of complex inflection points;
\item  $t_0\le \frac{d(d-2)}2$, i.e. there can be just a few real bitangents of  type $t_0$.
\end{itemize}
Furthermore, these bounds are easily seen to be sharp, provided that $d$ is even for the second inequality\footnote{Exercise:  check  that $i\ge 3$ for odd degrees.}.

Quartic plane curves and their bitangents have been popular subjects since at least two centuries, due to their connections with other important topics in algebraic geometry (del Pezzo surfaces, theta characteristics of algebraic curves, etc.) Subsequently, an abundant literature has been devoted to them from various (semi-)field perspectives. We refer  for example to \cite{Edge94,CapSer03,PSV11,BLMPR16,ChaJir17,LeeLen18,LenMar20,LarVog21,CueMar23,MPS23,MPS23b,GeiPan21} for a glimpse of such works from the last decades.

Studies of real bitangents of real plane curves of higher degrees are much more sporadic. Coolidge showed in \cite{Coo31} that any non-trivial polynomial relation among the quantities $i, t_0, t_2$ and $d$ is a consequence of Plücker and Klein relations. A signed count of  real bitangents of a real plane  sextic, that eventually  depends only  on  its   real scheme,  was obtained by   Kharlamov and R\u{a}sdeaconu in \cite{KhaRas13} as a special case of enumeration of rational curves in real $K3$-surfaces. Still in degree 6,  Nidhi,  Kummer, Plaumann, Namin, and Sturmfels made in \cite{NKPNS19} a thorough experimental study of several aspects of  real plane sextics. In particular they could realize several even numbers between 12 and 306 as the number of real bitangents of a real plane sextic (note that $t_\CC(6)=324$).

These are just about all the general results we are aware of regarding real bitangents of real plane  algebraic  curves.      To our knowledge, the following problem is still widely open as soon as $d\ge 5$.

\begin{prob}
Which integer numbers can occur as the number of real bitangents of a generic real plane algebraic curve of degree $d$? In particular what is the maximal number $t_{max}(d)$ of real bitangents that  real   algebraic curve of degree $d$ can have? For which numbers $d$ does the equality  $t_{max}(d)=t_\CC(d)$ hold?
\end{prob}

{\bf From now on, we focus our attention to  algebraic plane curves  of even degree.}
In this paper we provide  two contributions to the study of real bitangents of real plane algebraic curves, namely:
\begin{itemize}
\item[$(1)$] a signed enumeration that eventually only depends on the degree and the real scheme of the curve; this enumeration encompasses the two previous enumerations by Zeuthen and  Kharlamov-R\u{a}sdeaconu,
\item[$(2)$] the construction of real algebraic plane curves with a controlled topology and position with respect to its real bitangents; in particular we show that $t_{max}(6)\ge 318$,  disproving \cite[Conjecture 5.8]{NKPNS19}.
\end{itemize}

\subsection{Signed count}\label{sec:signed}
Our first main result is the existence of a signed count of real bitangents of a real plane algebraic curve $C$ of even degree that only depends on its degree and real scheme. Let $\RR P^2\setminus \RR C=\RR P^2_+\sqcup \RR P^2_-$ be a partition of $\RR P^2\setminus \RR C$ satisfying $\partial \RR P^2_\pm=\RR C$. Such a partition exists since $C$ has even degree, and it is unique since $\RP^2$ is connected. 
%{\color{red} We should be more specific on the definition or  properties  of this partition. This has to be with the orientable and non-orientable components of the complement.}\Rwan{No, it is correct as it is. What is the problem?}

Given  a real line $L$ tangent to $C$ at a real point $p$, we define  $\varepsilon_L(p)\in\{\pm 1\}$   by the condition that
 $\RR L$ is locally contained in $\RP^2_{\varepsilon_L(p)}$ in a neighborhood of $p$. 
The sign $\varepsilon(L)$ of a real bitangent $L$ of $C$ is then defined as follows:
\begin{enumerate}
  \item if $L$ is tangent to $C$ at two complex conjugated points of
  $C$, then $\varepsilon(L)=+1$;

\item if $L$ is tangent to $C$ at two real points $p_1,p_2\in\RR P^2$,
  then
  $\varepsilon(L)=\varepsilon_L(p_1)\varepsilon_L(p_2)$.
\end{enumerate}

We denote by $t_s(C)$ the corresponding signed count of real bitangents to the real plane algebraic curve $C$.
Recall that any connected component $o$ of $\RR C$, called an \emph{oval}, divides $\RP^2$
into two connected components, one of which, called the \emph{interior} of $o$, is homeomorphic to a disk. Such oval is said to be \emph{even} or \emph{odd}  depending on whether it is contained in the interior of an even or odd number of other ovals of $\RR C$.
The distinction between even and odd ovals of a real plane algebraic curve of even degree goes back to the work of Ragsdale \cite{Rag}, and played an important role in the development of real algebraic geometry since then. Interestingly, this distinction is also relevant when considering real bitangents. Section \ref{Sect:ProofFirstMainTheorem} is devoted to the proof of next theorem.
% ({\color{red} Is the next statement Theorem \ref{theo_signed_count}}).

\begin{theo}\label{thm:signed}
  Let $C$ be a generic real algebraic curve of even degree $d$ in  $\CC P^2$ with $p$ even ovals and $n$ odd ovals. Then one has
  \[
  t_s(C)= 2(p-n)(p-n-1)+\frac{d(d-2)}2.
  \]
\end{theo}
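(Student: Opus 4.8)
The plan is to prove the formula by a deformation argument: the signed count $t_s(C)$ is invariant under deformations of $C$ within the space of generic real curves of degree $d$, hence it only depends on the connected component of this space, which by a theorem of the Klein/Zeuthen circle of ideas is determined by the real scheme. To see invariance, first I would note that as $C$ varies in a generic one-parameter family, the only events affecting the set of real bitangents are (i) a pair of real bitangents colliding and becoming complex conjugate, or vice versa, and (ii) a bitangent passing through a moment where one of its two real tangency points degenerates (an inflectional event, i.e.\ crossing the dual cusp locus). In case (i) the two colliding real bitangents are tangent to $C$ at real points, and near the collision the local picture of $\RR C$ near each tangency point forces the two bitangents to carry opposite signs, so their contribution to $t_s$ cancels exactly the $+1$ that replaces them after they become conjugate; this is the key local sign computation. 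In case (ii), as a real tangency point $p$ runs through an inflection point, $\varepsilon_L(p)$ changes sign precisely when the line crosses from one side of $\RR C$ to the other at $p$; pairing such inflectional events along the family shows the signed count is unchanged. This gives that $t_s(C)$ is a deformation invariant, hence a function $t_s(p,n)$ of the real scheme data — at minimum of $(p,n)$ and the nesting tree.

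Next I would pin down the actual value of $t_s(p,n)$. The cleanest route is to relate $t_s(C)$ to a quantity that is manifestly computable, using Klein's formula \eqref{eq:klein}. Write the real bitangents as $t_2$ of type $t_0$-complement (tangent at two real points) plus $t_0$ (tangent at two real points and counted in Klein's $t_0$) — more precisely, with Klein's notation, $t_0$ is the number of real bitangents tangent at two real points and locally on the ``pseudoline'' side, while the remaining real bitangents tangent at two real points, together with those tangent at conjugate points, make up the rest. The signed count rewrites as $t_s(C) = (\#\{\text{conjugate-tangent bitangents}\}) + \sum_{\text{real-real } L} \varepsilon_L(p_1)\varepsilon_L(p_2)$. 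The sub-sum over real-real bitangents splits according to whether the two tangency points lie on the same side of $\RR C$ or on opposite sides; using $\partial\RR P^2_\pm = \RR C$ and the interior/exterior structure of ovals, ``$p_1,p_2$ on the same side'' is governed by a $\ZZ/2$ homological condition — namely whether the open arc of the bitangent between the tangency points crosses $\RR C$ an even or odd number of times, which in turn is read off from how the bitangent threads through the even and odd ovals. Carrying this bookkeeping, the contribution of real-real bitangents reduces to a count controlled by $p-n$ (the even ovals contribute with one sign to the ``side'' of the pseudoline/outer region, the odd ovals with the other), and the conjugate-tangent ones, always $+1$, supply the complementary term that assembles via Klein's relation into $\frac{d(d-2)}2$.

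Finally, to fix the quadratic term $2(p-n)(p-n-1)$ I would compute $t_s$ on an explicit family of reference curves for which both the topology and all real bitangents are understood — for instance small perturbations of a union of $d/2$ nested ellipses, or of a product of conics of the kind treated later in the paper, where one can arrange the oval configuration to realize any prescribed $(p,n)$ and directly tabulate the signs of the (few) real bitangents and the (many) conjugate-tangent ones. Matching the deformation-invariant $t_s(p,n)$ against these base cases determines the constants, and one checks against Zeuthen's quartic table ($d=4$: the six schemes give $p-n \in \{2,1,0,1,0,-1\}$ up to the even/odd split, returning $t_s = 8, 4, 0, \ldots$ consistent with Figure \ref{fig:quartic}) and against Kharlamov--R\u{a}sdeaconu in degree $6$.

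The main obstacle I anticipate is the local sign computation in case (i): one must check, with care about orientations of $\RR P^2_\pm$ near a tangency point that is about to become a flex-like degenerate point, that the two merging real bitangents genuinely contribute $\varepsilon(L_1) + \varepsilon(L_2) = 0$ and not $\pm 2$, uniformly over the different ways two tangency points can collide (two points on the same oval, on different ovals, nested or not). A secondary difficulty is making the homological ``same side vs.\ opposite side'' bookkeeping in the second paragraph precise enough that the even/odd oval dichotomy (rather than just the total oval count) emerges with the correct coefficient $2$; this is exactly where the Ragsdale even/odd distinction enters, and getting the combinatorics to collapse to the clean expression $2(p-n)(p-n-1)$ will require a careful inclusion–exclusion over pairs of ovals.
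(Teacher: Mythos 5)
Your strategy (wall-crossing invariance plus base cases) is not the paper's, which instead computes $t_s$ directly by evaluating the Euler characteristic of the real relative Hilbert square $\RR\CCC^{[2]}$ of the double cover $S\to\CP^2$ branched along $C$ in two ways, via the two projections to $\RR V$ and to $\RR S^{[2]}$; that computation needs no deformation argument and no knowledge of which real schemes occur. More importantly, your reduction contains a gap that cannot be repaired within the proposed framework. A wall-crossing argument shows at best that $t_s$ is constant on each connected component of the space of curves you allow yourself to move in; if you stay among non-singular curves these components are the rigid isotopy classes, and it is false that a rigid isotopy class is determined by the real scheme (already for sextics the scheme $\langle 1\sqcup 1\langle 4\rangle\rangle$ is realized by two distinct classes, distinguished by whether $C\setminus\RR C$ is connected, as recalled in the paper's remark after Theorem \ref{thm:sextics}). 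So your final step would require a reference curve, with all real bitangents tabulated, in \emph{every} chamber --- and for $d\ge 8$ neither the chambers nor even the set of realizable real schemes is classified (this is Hilbert's 16th problem). The alternative route you sketch via Klein's formula also cannot close up: the number of conjugate-tangent bitangents is $t_0=\tfrac{d(d-2)-i}{2}$, which depends on the number $i$ of real inflection points, a quantity \emph{not} determined by the real scheme; consequently the real--real contribution must equal $2(p-n)(p-n-1)+\tfrac{i}{2}$ and is not ``controlled by $p-n$'' as you claim.

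There are also local inaccuracies worth flagging. When two distinct real bitangent lines collide and become a complex-conjugate pair of lines, they contribute $0$ to $t_s$ afterwards, not $+1$; the ``$+1$ that replaces them'' pertains to a different wall, where the two tangency points of a \emph{single} real bitangent collide (fourth-order contact) and become conjugate, turning a $t_2$-bitangent into a $t_0$-bitangent --- there one must check that $\varepsilon_L(p_1)\varepsilon_L(p_2)=+1$ just before the collision. Your list of walls omits this event and also does not address tritangent lines (harmless, but it should be said why). Finally, your quartic sanity check is miscomputed: the six schemes have $p-n=4,3,2,1,0,0$, giving $t_s=28,16,8,4,4,4$ in agreement with Figure \ref{fig:quartic}, not $8,4,0,\ldots$. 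None of these local issues is fatal on its own, but the global reduction from ``deformation invariant'' to ``function of $(p,n)$'' is, and it is precisely what the Hilbert-square computation in Section \ref{Sect:ProofFirstMainTheorem} is designed to avoid.
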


Note that $t_s(C)\ge \frac{d(d-2)}2$. Since by definition $t_s(C)$ bounds from below the number of real bitangents of $C$, we get the following result. 
\begin{coro}\label{cor:lowerbound}
A generic real plane algebraic curve of even degree $d$ has at least $\frac{d(d-2)}2$ real bitangents.
\end{coro}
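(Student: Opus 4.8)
The plan is to deduce Corollary~\ref{cor:lowerbound} immediately from Theorem~\ref{thm:signed}. Let $t(C)$ denote the number of real bitangents of $C$ and $t_s(C)$ their signed count as defined in Section~\ref{sec:signed}. First I would record the elementary comparison $t(C)\geq |t_s(C)|$: writing $t(C)=t_+ + t_-$, where $t_\pm$ is the number of real bitangents carrying the sign $\pm 1$, one has $t_s(C)=t_+ - t_-$, hence $|t_s(C)|=|t_+-t_-|\leq t_+ + t_- = t(C)$.

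Next I would observe that the right-hand side of the formula in Theorem~\ref{thm:signed} is automatically bounded below by $\tfrac{d(d-2)}2$. Setting $m=p-n\in\ZZ$, the term $2(p-n)(p-n-1)=2m(m-1)$ is twice the product of two consecutive integers, hence non-negative; consequently
\[
t_s(C)=2m(m-1)+\frac{d(d-2)}2\ \geq\ \frac{d(d-2)}2\ \geq\ 0 .
\]
Chaining the two steps gives $t(C)\geq |t_s(C)| = t_s(C)\geq \tfrac{d(d-2)}2$, which is the assertion.

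There is essentially no obstacle here: all the substance is already contained in Theorem~\ref{thm:signed}, and what remains is the trivial inequality $m(m-1)\geq 0$ for $m\in\ZZ$ together with the observation that an unsigned cardinality always dominates the absolute value of any $\{\pm1\}$-weighted count of the same finite set. (Note that the bound is vacuous when $d=2$, in accordance with a smooth conic having no bitangents.)
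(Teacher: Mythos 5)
Your proof is correct and follows exactly the paper's reasoning: the paper likewise notes that $2(p-n)(p-n-1)\ge 0$ forces $t_s(C)\ge \frac{d(d-2)}2$ and that the signed count bounds the actual number of real bitangents from below. Nothing further is needed.
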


As mentioned above, Theorem \ref{thm:signed} and Corollary \ref{cor:lowerbound} recover results by Zeuthen and Kharlamov-R\u{a}sdeaconu in the case of quartics and sextics, respectively.

\begin{expl}[The case $d=4$]
In this case  Theorem \ref{thm:signed} gives
  \[
  t_s(C)=2(p-n)(p-n-1)+4.
  \]
  Note that $n=0$ for a real quartic curve unless $n=p=1$,   see Figure \ref{fig:quartic}.  
Let $L$ be a real bitangent to a generic real quartic $C$. By Bézout theorem,
the real part $\RR L$  is contained in the  topological closure of  the non-orientable component of
  $\RR P^2\setminus\RR C$, in particular $\varepsilon(L)=+1$. Hence
 $t_s(C)$ is the actual number of real bitangents of $C$, and
 Theorem \ref{thm:signed} specializes to  
Zeuthen's result \cite{Zeu74}.
\end{expl}

\begin{expl}[The case $d=6$]
  In this case, Theorem \ref{thm:signed} gives
  \[
  t_s(C)=2(p-n)(p-n-1)+12.
  \]
  Taking the double covering  $\pi:\CC S\to \CC P^2$ of $\CC P^2$ branched along a real sextic
$C\subset \CP^2$, endowed with any of its two real structures, we  obtain that
the real algebraic surface $S$ contains
\[
t_s=\frac 12 \chi(\RR S)\left( \chi(\RR S)-2 \right)+\chi(\CC S)
\]
  real rational curves realizing the class $\pi^*(\O_{\CP^2}(1))$, counted with
  Welschinger sign. This is \cite[Theorem A, Section 4.1]{KhaRas13}
  applied to 
  $g=2$.
  %% In particular one has  
  %% \begin{itemize}
  %% \item   $t_{s}(Harnack)= 166$;
  %%     \item   $t_{s}(Gudkov)=12$;
  %% \item   $t_{s}(Hilbert)=124$.
  %% \end{itemize}
\end{expl}

The invariant $t_s$ is  in general far from being a sharp lower bound, in the sense that a curve $C$ usually has real bitangents of
different signs. The next result is obtained using the construction method detailed below and in
Section \ref{sec:conic}.
\begin{prop}\label{prop:optimality}
For the following real schemes, there exists a generic real plane algebraic curve $C$ of degree $2k$  with exactly $t_s(C)$ real bitangents:
\begin{enumerate}
  \item $\RR C$ is  a disjoint union
    of $p\le \frac{3k(k-1)}{2}+1$ empty ovals;
  \item $k=2l+1$ and  $\RR C$  a disjoint union
   of one oval enclosing $n$ empty ovals (i.e. $\RR C$ has exactly $n+1$
  connected components), with  $n\le \frac{3k(k-1)}{2}$.  
  \end{enumerate}
\end{prop}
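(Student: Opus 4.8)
We would read off the target value $t_s(C)$ from Theorem~\ref{thm:signed}, and then realize it by an explicit perturbation of a multiple conic, using the bitangent-location recipe of Section~\ref{sec:conic}. Writing $d=2k$: in case~(1) every oval is empty, hence even, so $n=0$ and
\[
t_s(C)=2p(p-1)+\frac{d(d-2)}{2}=2p(p-1)+2k(k-1),
\]
while in case~(2) the outer oval is even and the $n$ inner ones are odd, so $p=1$ and
\[
t_s(C)=2(1-n)(-n)+2k(k-1)=2n(n-1)+2k(k-1).
\]
Since each sign is $\pm1$ and $t_s(C)>0$, the number of real bitangents of $C$ is at least $t_s(C)$; so it suffices to produce a generic real curve $C$ with the prescribed real scheme all of whose real bitangents have sign $+1$, and for which the recipe of Section~\ref{sec:conic} exhibits exactly $t_s(C)$ real bitangents (the completeness of that recipe then rules out further cancelling pairs).

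For the construction, start from a real conic $Q$ with $\RR Q$ an oval, and build $C$ from the non-reduced curve $Q^k$ by the perturbation procedure of Section~\ref{sec:conic}, carried out if necessary in several steps, perturbing $Q\cup C'$ where $C'$ is the curve produced at the previous stage, until the degree reaches $2k$. At each step we would choose the perturbation so as to: keep control of the real isotopy type, inserting a prescribed number of small ovals along $Q$ — all empty in case~(1), or all inside one large oval in case~(2); and leave the bitangents already present, and produce the new ones, at the positions dictated by Section~\ref{sec:conic}. The hypothesis $k=2l+1$ in case~(2) enters because nesting all the new ovals inside a single oval requires the real branch of $Q^k$ to persist under the perturbation as one oval, which occurs exactly when $k$ is odd: locally $v^k=tg(u)$ has a single real branch when $k$ is odd, but splits into small ovals when $k$ is even.

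Then one enumerates and signs the real bitangents of $C$. By Section~\ref{sec:conic} they split into two families: those inherited from the multiple conic, attached to pairs of points of $Q$ and accounting for the $2k(k-1)$ term; and those born from the perturbation, located near the new ovals, accounting for the $2p(p-1)$, resp.\ $2n(n-1)$, term. For the first family the sign is $+1$ by the Bézout argument used for $d=4$ in the excerpt, $\RR L$ being forced near each tangency point onto the non-orientable side of $\RP^2\setminus\RR C$; for the second the sign is read off the local normal form of Section~\ref{sec:conic}, the new ovals and their tangents being produced on the side of $C$ making $\varepsilon(L)=+1$. The bounds $p\le\frac{3k(k-1)}{2}+1$ and $n\le\frac{3k(k-1)}{2}$ are then exactly the ranges in which this bookkeeping closes up: $\frac{3k(k-1)}{2}=\sum_{j=1}^{k-1}3j$ is the number of new ovals one may insert ($3j$ at the step raising the degree from $2j$ to $2j+2$) while keeping every created bitangent real of sign $+1$, the extra $+1$ in case~(1) being the persisting oval of $Q$.

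The main obstacle is not the arithmetic but the simultaneous control, through the entire perturbation, of three things: the real isotopy type of $C$ (a patchworking-type argument), the fate of each of the $t_\CC(2k)$ complex bitangents under the degeneration — in particular which of them remain real — and the sign of each surviving real bitangent. The most delicate is the sign of the perturbation-born bitangents near the new ovals, since it depends on the precise local model of Section~\ref{sec:conic} and on which side of $C$ the new ovals and their tangent lines sit; getting this side right, consistently for all of the up to $\frac{3k(k-1)}{2}$ new ovals, is exactly what forces the stated bounds on $p$ and $n$.
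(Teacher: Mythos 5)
Your overall strategy is the right one and matches the paper's: compute $t_s(C)$ from Theorem~\ref{thm:signed} (your values $2p(p-1)+2k(k-1)$ and $2n(n-1)+2k(k-1)$ are correct), observe that it suffices to exhibit a curve of the given real scheme all of whose real bitangents have sign $+1$, and produce that curve as a perturbation of $\C_0^k$ controlled by Theorem~\ref{thm:bit}. But the proof has a genuine gap: everything hinges on exhibiting a \emph{specific} $k$-section whose $\mathcal L$-scheme forces, for the appropriate sign of $t$, that Theorem~\ref{thm:bit} produces no negative real bitangent, and you never construct it. In the paper this is Lemma~\ref{lem:harn 4k+2}: one starts from the rational simple Harnack curve $\theta\mapsto(\theta^k,(\theta-1)^{4k})$, whose $\frac{3k(k-1)}{2}$ relevant real hyperbolic nodes sit in controlled positions (this is where the bound on $p$ and $n$ actually comes from, via \cite{Bru14b}, not from a count of $3j$ ovals added per degree step), and smooths a chosen subset of them into ovals by Brusotti's theorem. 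The decisive point, which your sketch does not isolate, is the reality criterion of Theorem~\ref{thm:bit}(ii): the pair of bitangents attached to a real critical fiber $F_i$ and another real intersection point $q_{i,l}$ is real or not according to the sign of $t$ and whether $q_i$ lies above or below $q_{i,l}$. The whole construction is arranged so that with $t>0$ these extra bitangents are either non-real or real of positive sign, and so that the surviving real $L_{i,j,t}$'s are all positive; without specifying the $\mathcal L$-scheme there is nothing to check this against.

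Two of the ingredients you substitute for this are incorrect or inapplicable. First, the iterated perturbation of $Q\cup C'$ in $\CP^2$ is not what Section~\ref{sec:conic} provides: Theorem~\ref{thm:bit} tracks bitangents only for the one-step perturbation of $\C_0^k$ governed by a single $k$-section of $\O_{\CP^1}(4)$ in general position with respect to the vertical pencil (iteration, if any, happens at the level of the $k$-section, as in Proposition~\ref{prop:asymconstr1}, not in $\CP^2$, where you would lose all control of the bitangents). Second, the Bézout argument giving $\varepsilon(L)=+1$ is special to $d=4$ (a line meets a quartic in at most $4$ points, so $\RR L$ cannot enter an oval's interior); it fails already for sextics, where negative bitangents do occur. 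The correct mechanism is that bitangents at complex conjugate points are positive by definition, while the signs of bitangents at two real points are read off the explicit mutual position given by Theorem~\ref{thm:bit} applied to the $\mathcal L$-scheme of Lemma~\ref{lem:harn 4k+2} — which is exactly the piece your proposal leaves out.
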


\begin{remark}
  It follows form Comessati Inequality, see for example  \cite{Mang17}, that a non-singular real algebraic  curve of degree $2k$ with either $p$ or $n+1$ ovals satisfies  $p, n\le \frac{3k(k-1)}{2}+1$. Our proof of Proposition
  \ref{prop:optimality}  does not cover the case $k=2l+1$ and
  $n=\frac{3k(k-1)}{2}+1$.  Assuming that any pair   of  ovals each one outside the other
have at
  least one common bitangent, a necessary condition for all the
  bitangents to have the same sign is for the real scheme to consist
  of a union of empty ovals contained in the interior of a nest of
  ovals, see Figure \ref{fig:sharp}. It   would be interesting to
  investigate whether there exists a generic real plane algebraic
  curve $C$ with exactly $t_s(C)$ real bitangents in that case.  
 
\begin{figure}[h!]
\centering
\begin{tabular}{c} 
  \includegraphics[height=4cm, angle=0]{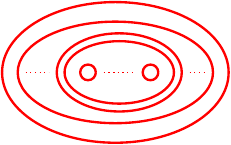}
\end{tabular}
\caption{Is the number $t_s$ sharp for this isotopy type?}
\label{fig:sharp}
\end{figure} 
\end{remark}

\subsection{Constructions}\label{sec:intro constr}
Our second main result is a method for constructing real algebraic curves that makes it possible to locate real bitangents. The precise statement requires a non-negligible amount of definitions, and will be given in Section \ref{sec:real bit}. For now, we confine ourselves to the following  informal formulation.

\begin{theom}\ref{thm:bit}
By perturbing a multiple plane conic, one can construct real algebraic plane curves of even degree with a controlled real scheme and position with  respect to its real bitangents.
\end{theom}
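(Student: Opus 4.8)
The plan is to place the desired curves in a one-parameter real family degenerating to a multiple conic and to control both the real scheme and the bitangents through this degeneration, iterating the move when a single pass does not suffice. Fix a smooth real conic $Q$ with $\RR Q\neq\emptyset$, and for a real homogeneous polynomial $R$ of degree $2k$ and small $t>0$ set $C_t=\{Q^k+tR=0\}$. For $|t|$ small, $\RR C_t$ lies in an arbitrarily thin tubular neighbourhood of the oval $\RR Q$, and in a coordinate in which $\RR Q$ is the zero section of its (topological) normal bundle, with normal coordinate $w$ and base coordinate $u\in\RR Q$, the equation of $C_t$ reads $w^k=-t\,\overline R(u)\bigl(1+o(1)\bigr)$, where $\overline R=R|_Q$ is a section of a degree-$4k$ line bundle on $Q\cong\CP^1$. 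Reading off the parity of $k$ and the sign of $\overline R$ along $\RR Q$ then gives the real scheme of a single pass (a chain of ovals over the arcs $\{\overline R<0\}$ when $k$ is even, a single oval crossing $\RR Q$ when $k$ is odd); repeating the construction near the curve built so far — this accounts for most of the definitions of Section~\ref{sec:real bit} — assembles the nested, respectively empty, configurations of Proposition~\ref{prop:optimality}.

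The delicate point is the bitangents. Since $\RR C_t$ hugs $\RR Q$, every real tangent line of $C_t$ is $C^0$-close to a tangent line $\ell_a$ of $Q$, and both of its real contact points must cluster near the single point $a\in\RR Q$ where $\ell_a$ touches $Q$; so locating the real bitangents of $C_t$ reduces to a local problem near each $a$, namely finding the lines of the natural two-parameter family $\{w=\psi(u)\}$ that are tangent to $\RR Q$ at $a$ and doubly tangent to the local model $w^k=-t\overline R(u)$. Because $Q^k$ meets every such line with multiplicity at least $2k$ at the contact point, the naive limit of the bitangents is the whole positive-dimensional family of lines meeting $Q$, so the finitely many bitangents can only be extracted by rescaling $w$ by a suitable fractional power of $t$; after this rescaling the double-tangency equations near $a$ have solutions governed by the Newton polygon of $\overline R$ at $a$, equivalently by the way $\RR R$ meets $\RR Q$ near $a$. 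This identifies each bitangent with a combinatorial datum attached to a zero (or a cluster of zeros) of $\overline R$, pins down its position relative to $\RR Q$, and hence, by comparing the relevant local branch of $C_t$ with the line, computes the sign $\varepsilon_L$ of each contact and the sign $\varepsilon(L)$ of the bitangent.

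The main obstacle, and the technical heart of the construction, is this second step: one must verify that these combinatorial data account for \emph{exactly} $t_\CC(2k)$ bitangents, with none lost in the non-reduced limit and none hidden among complex contacts, and then that the perturbations can be steered — through the choice of $R$ and of the successive moves — so that the contacts lie on the prescribed side $\RR P^2_\pm$; combined with Theorem~\ref{thm:signed} this gives the sharpness statement of Proposition~\ref{prop:optimality}, and a different choice gives large counts such as $t_{max}(6)\ge 318$. An equivalent bookkeeping device, which I would run in parallel as a check, is the dual family: $C_t^\vee$ has degree $2k(2k-1)$ and degenerates to the multiple conic $k(2k-1)\cdot Q^\vee$ in the dual plane, its $t_\CC(2k)$ nodes collapsing onto $Q^\vee$; separating these nodes by a base change $t=\tau^m$ followed by blow-ups recovers the same combinatorics together with the real signs.
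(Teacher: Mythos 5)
Your localisation of the bitangents is where the argument breaks down. You assert that every real bitangent of $C_t$ is $C^0$-close to a tangent line $\ell_a$ of $Q$ with both contact points clustering near the single tangency point $a$, and, dually, that $C_t^\vee$ degenerates to the multiple conic $k(2k-1)\cdot Q^\vee$. Both claims are false, and they miss almost all of the bitangents. The tangent direction of $C_t$ is close to that of $Q$ only away from the points $p_i\in Q$ over which the fibre of the normal bundle is tangent to the perturbing section; near such a point the branch of $C_t$ folds back and its tangent lines sweep out, in the limit, the entire pencil of lines through $p_i$. Accordingly the limit of $C_t^\vee$ is not a multiple of $Q^\vee$ but the divisor $k\,Q^\vee+\widehat L_1+\cdots+\widehat L_{d(d-2)}$, where $\widehat L_i=p_i^\vee$ is the tangent line of $Q^\vee$ at $L_i^\vee$. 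Hence for every pair $i\ne j$ the secant line $L_{i,j}$ through $p_i$ and $p_j$ --- which is \emph{not} tangent to $Q$ --- is the limit of a bitangent whose two contacts cluster near the two \emph{distinct} points $p_i$ and $p_j$; these account for $\binom{d(d-2)}{2}$ of the $t_\CC(d)$ bitangents, i.e.\ the dominant term, and your reduction to a local problem at a single point of $\RR Q$ cannot produce them. The paper's proof of Theorem \ref{thm:bit} is organised around precisely this dichotomy: secant-type bitangents $L_{i,j,t}$ (real whenever $p_i,p_j$ are both real), versus the $2(k-2)$ bitangents collapsing onto each tangent line $L_i$, whose reality depends on the sign of $t$ and on whether the tangency point lies above or below the other branches in its fibre --- a fact extracted from the order of vanishing in $t$ of the discriminant of an explicit local equation, which your sketch does not reach. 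The paper's final sanity check is that the identified bitangents already sum to $t_\CC(d)$; with your list that count would come out far too small.

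A second, independent problem is the perturbation itself. With $C_t=\{Q^k+tR=0\}$ the topology is governed by $\overline R=R|_Q$, which has at most $4k$ real zeros, so a single pass yields only $O(k)$ ovals --- far short of the $O(k^2)$ ovals needed for Proposition \ref{prop:optimality} and Theorem \ref{thm:asym1}. Once you iterate, the curve is no longer close to a multiple conic and the degeneration argument for the bitangents no longer applies. The paper avoids iteration by perturbing in one step with a full $k$-section of $\O_{\CP^1}(4)$ (all monomials $a_{i,j}u^iw^j$ with $i+4j\le 4k$, weighted by $t^{k-j}$), whose real topology is prescribed in advance by its $\L$-scheme via patchworking, Harnack's method or Brusotti's theorem, and then tracks the bitangents through the dual-curve analysis described above.
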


The idea of perturbing a multiple plane conic to  construct real algebraic plane curves of even degree with a controlled topology can be traced back to the work of Chevallier \cite{Che}, and has been successfully employed by different authors since then \cite{O2,Ore6,FinKha15,Bru11}.  The added value of our work is to show that this method also allows to keep track of the  real bitangents. The idea behind Theorem \ref{thm:bit} is quite simple: while it is  difficult to study the position of a plane curve with respect to a net of lines, there exist powerful  tools to study its position  with respect to a pencil of lines \cite{O1,O2,O3,Ore2,IS,Br3,Lop}. Perturbing a multiple of a conic $C_0\subset\CP^2$ amounts  to working in the normal bundle $\mathcal N_{C_0/\CP^2}$ rather than in $\CP^2$, and replaces the net of lines of this latter with the fibers of the bundle map
 $\mathcal N_{C_0/\CP^2}\to C_0$.

\medskip
Now we present  three applications of Theorem \ref{thm:bit}. We start by applying thoroughly this method to sextics.
 \begin{figure}[!htb]
\centering
\begin{tabular}{c|c|c}
 $h$ & $\langle p\rangle$ &$\langle a \sqcup 1  \langle b\rangle\rangle$

\\ \hline &&\\ \includegraphics[height=1cm, angle=0]{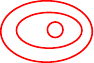}
  &
  \includegraphics[height=1cm, angle=0]{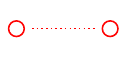}
  \put(-55,-5){$\underbrace{\qquad\qquad\quad
      }_{\mbox{$a$ ovals}}$}
  &
  \includegraphics[height=1cm, angle=0]{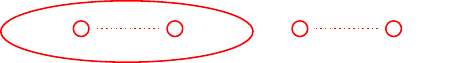}
  \put(-175,-5){$\underbrace{\qquad\qquad\quad
      }_{\mbox{$a$ ovals}}$}
  \put(-74,-5){$\underbrace{\qquad\qquad\quad
      }_{\mbox{$b$ ovals}}$} 
\\&& \\ &   $a\le 10$
& $a+b\le 10$ and   
$\left\{\begin{array}{l}
a=b\mod 8 \mbox{ if }a+b=10
\\ a=b\pm 1\mod 8 \mbox{ if }a+b=9
\end{array}\right.$
\end{tabular}
\caption{Real schemes realized by real plane sextics.}\label{fig:rc sextic}
  \end{figure}
  
 It is a result of Gudkov \cite{Gud}, see also \cite{DK,Mang17}, that the  real schemes realized by real plane sextics are exactly those depicted on Figure \ref{fig:rc sextic}. Given such real scheme  $\mathcal R$, we denote   by $ t_{max}^{\mathcal R}(6)$ the maximal number of real bitangents that  a non-singular real plane sextic with real scheme $\mathcal R$ may have.
 
\begin{theo}\label{thm:sextics}
  Let  $\mathcal R\ne \emptyset,h$ be a real scheme realized by a  real plane sextic with $p$ even ovals and $n$ odd ovals. Then
  \[
  t_{max}^{\mathcal R}(6)\ge 2(p-n)+298.
  \]
\end{theo}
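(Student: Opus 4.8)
The plan is to realize every admissible real scheme by a perturbation of a triple conic and to locate, via Theorem~\ref{thm:bit}, enough of the resulting curve's real bitangents: one uses the construction of Section~\ref{sec:conic} in the direction opposite to Proposition~\ref{prop:optimality}, arranging the construction data so as to make real as many of the $324$ bitangents as the method allows, rather than as few as the signed count permits.

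First I would reduce to a construction problem. By Gudkov's classification \cite{Gud} (Figure~\ref{fig:rc sextic}), a real scheme $\mathcal R\neq\emptyset,h$ of a real plane sextic is either $\langle p\rangle$ with $1\le p\le10$ --- so $n=0$ and $\mathcal R$ has $p$ even ovals --- or $\langle a\sqcup1\langle b\rangle\rangle$ with $a+b\le10$ --- so $p=a+1$ and $n=b$. For each such $\mathcal R$ it suffices to produce a generic real perturbation $C$ of $3C_0$, where $C_0\subset\CP^2$ is a fixed smooth real conic with $\RR C_0\neq\emptyset$, with real scheme $\mathcal R$ and at least $2(p-n)+298$ real bitangents. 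By Section~\ref{sec:conic} such a perturbation is encoded by a real curve in the ruled surface compactifying $\mathcal N_{C_0/\CP^2}$, and Theorem~\ref{thm:bit} converts the combinatorics of that curve --- its position relative to the ruling and the local models at its ramification points over $C_0$ --- into both the real scheme of $C$ and the location of its real bitangents.

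Next I would choose the construction data so as to maximise the number of real bitangents, and then track how this number depends on $\mathcal R$. The bitangents located by Theorem~\ref{thm:bit} split into those coming from the real solutions of the auxiliary one-variable enumerative problem attached to the construction on $C_0$ (the ones transverse to the ruling) and those carried by the clusters of branches of $C$ over $\RR C_0$ (the ones along the ruling). Making the auxiliary problem totally real and the local clusters as real as possible, I expect a total of $298$ real bitangents when $p=n$ (e.g.\ for $\mathcal R=\langle1\langle1\rangle\rangle$). A general $\mathcal R$ is joined to a $p=n$ scheme by a chain of localized elementary modifications of the data, each creating or destroying one oval of $\RR C$ and staying within the realizable sextic schemes; near the short arc of $\RR C_0$ carrying such a modification a complex-conjugate pair of bitangents crosses the real locus, while elsewhere the number of real bitangents is unchanged (although their signs may be reshuffled, since the partition $\RR P^2\setminus\RR C=\RR P^2_+\sqcup\RR P^2_-$ changes globally when an oval appears). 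A local inspection of how the new arc of tangent lines crosses $\RR C$, through the definition of $\varepsilon(L)$, should show that the net change is $+2$ when an even oval is created and $-2$ when an odd one is; as creating an even (resp.\ odd) oval changes $p-n$ by $\pm1$, summing over the chain yields $2(p-n)+298$. Theorem~\ref{thm:signed}, which forces $t_s(C)=2(p-n)(p-n-1)+\frac{d(d-2)}2$ to equal the difference between the numbers of positive and negative real bitangents all along the chain, furnishes a consistency check.

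The main obstacle will be to make this modification bookkeeping rigorous: one must verify that the elementary modifications can be carried out \emph{independently}, so that none of them destroys a real bitangent created by an earlier one, and that each contributes exactly $\pm2$ and not $\mp O(1)$. This requires controlling how the real bitangents move under a perturbation of the combinatorial data --- slightly more than the topological conclusion of Theorem~\ref{thm:bit} --- and it is here that the bound $a+b\le10$ on sextic schemes, i.e.\ the finiteness of the room available in the construction, is genuinely used. A secondary obstacle, already visible in the $p=n$ base case, is to pin down the constant $298$ exactly: this amounts to counting, as $C$ degenerates to $3C_0$, how many of the $324$ complex bitangents of $C$ converge to honest bitangent configurations located by Theorem~\ref{thm:bit}, and how many are irremediably forced into complex-conjugate pairs.
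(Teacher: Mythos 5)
Your high-level strategy is the paper's: perturb a triple conic, arrange for all $24$ vertical tangent fibers of the $3$-section to be real (so that all $\binom{24}{2}=276$ lines $L_{i,j}$ deform to real bitangents), and read off the remaining bitangents from Theorem~\ref{thm:bit}. Your arithmetic is also consistent with the paper's: with all $24$ tangent fibers real, the direct count is $276+2\bigl(12-(p+n-1)\bigr)+4(p-1)=2(p-n)+298$, where the middle term comes from the zig-zags of the non-contractible component of the $3$-section (each contributing one $L_i$ that deforms to two real bitangents and one that deforms to a conjugate pair) and the last from the $p-1$ ovals of the $3$-section that become even ovals of the sextic. But there are two genuine gaps.

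First, the existence statement you take for granted is the actual content of the proof. You need, for \emph{every} admissible $\mathcal R\ne\emptyset,h$, a real $3$-section realizing $\mathcal R$ after perturbation \emph{and} having all $24$ vertical tangencies real. This is Corollary~\ref{cor:max tang 3-sec}, and it is not automatic: it is established by an explicit cut-and-glue procedure on $\mathcal L$-schemes whose realizability by genuine algebraic $3$-sections rests on Viro's patchworking theorem (Lemma~\ref{lem:viro}), with a case-by-case check covering all of Gudkov's schemes. Your "chain of elementary modifications" hides exactly this difficulty: each intermediate $\mathcal L$-scheme in your chain must itself be realized by an algebraic $3$-section with $24$ real vertical tangencies, and nothing in your argument produces these curves. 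The chain is also unnecessary once the construction exists, since the count above is a single closed formula rather than an induction.

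Second, the reality criterion you invoke is the wrong one. Whether the two bitangents $L^1_{i,l,t},L^2_{i,l,t}$ near a tangent fiber $F_i$ are real is governed by Theorem~\ref{thm:bit}(ii): both $q_i$ and $q_{i,1}$ must be real and the sign of $t$ must match whether $q_i$ lies above or below $q_{i,1}$. The quantity $\varepsilon(L)$ from Section~\ref{sec:signed} is the Welschinger-type sign entering the invariant $t_s$ of Theorem~\ref{thm:signed}; it is defined only for bitangents that are already real and says nothing about which bitangents become real. Using $t_s$ as a consistency check is fine, but your "local inspection \dots through the definition of $\varepsilon(L)$" cannot yield the $\pm2$ you need; the correct local input is the above/below dichotomy at the $24$ tangency points, which is what distinguishes ovals deforming to even ovals (contributing $4$ real bitangents) from those deforming to odd ovals (contributing $0$) for a fixed sign of $t$.
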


\begin{rem}
  If $\mathcal R= \emptyset$, then Klein Formula \eqref{eq:klein} implies that a real  sextic with empty real part has exactly 12 real bitangents.
\end{rem}

The lower bound from Theorem \ref{thm:sextics}  is maximal when
$p=10$ and $n=0$, this is:
\[
t_{max}^{\langle 10\rangle}(6)\ge 318.
\]
Our Theorem \ref{thm:sextics} improves almost all lower bounds on
$ t_{max}^{\mathcal R}(6)$ from \cite{NKPNS19}, with the notable exception of the Gudkov real scheme $\langle 5 \sqcup 1  \langle 5\rangle\rangle$. In this case we obtain the lower bound $t_{max}^{\langle 5 \sqcup 1  \langle 5\rangle\rangle}(6)\ge300$, which  is smaller than the lower bound $t_{max}^{\langle 5 \sqcup 1  \langle 5\rangle\rangle}(6)\ge 306$ obtained also in \cite{NKPNS19}.
Note that Theorem \ref{thm:sextics} is the optimal result that can be achieved by perturbing a triple non singular conic. Hence there should remain some room left to improve Theorem \ref{thm:bit} to a more efficient systematic way of constructing real plane algebraic curves with a controlled position with respect to their real bitangents.

  In the case of maximal sextics, Theorem \ref{thm:sextics} gives the following lower bounds:
\[
t_{max}^{\langle 9 \sqcup 1  \langle 1\rangle\rangle}(6)\ge 316,
\qquad
t_{max}^{\langle 5 \sqcup 1  \langle 5\rangle\rangle}(6)\ge 300,
\qquad
t_{max}^{\langle 1 \sqcup 1  \langle 9\rangle\rangle}(6)\ge 284.
\]
Note that the two lower bounds on
$t_{max}^{\langle 9 \sqcup 1  \langle 1\rangle\rangle}(6)$ and
$t_{max}^{\langle 10\rangle}(6)$
 disprove \cite[Conjecture 5.8]{NKPNS19}.

\begin{rem}
One can refine Theorem \ref{thm:sextics} by considering $ t_{max}^{\mathcal U}(6)$ for  \emph{rigid isotopy classes} $\mathcal U$ of real plane sextics. By Nikulin's rigid isotopy classification of real sextics  \cite{Nik,DK,Mang17}, the data of such $\mathcal U$ is equivalent to the data of  a real scheme with the additional information whether $C\setminus \RR C$ is connected or not.
The refined statement for rigid isotopy classes is the same as Theorem \ref{thm:sextics} for all rigid isotopy classes except for $\emptyset$, $h$, and  the class of dividing curves realizing the real scheme $\langle 1 \sqcup 1  \langle 4\rangle\rangle$, which cannot be obtained by perturbing   a non-reduced conic.
\end{rem}

Next we focus on asymptotical constructions, which provide the best, that we are aware of up to now,  lower bounds regarding the number of real bitangents.  Note that these may not be the optimal constructions for a specific  degree,
as one sees already in next statement.
\begin{theo}\label{thm:asym1}
One has
\begin{enumerate}
\item $t_{max}(6)\ge 318 =t_\CC(6)-6$; 
  \item $t_{max}(d)\ge t_\CC(d)-\frac{d(d-2)(d-4)}{6}=t_{\CC}(2k)
 -\frac{4k(k-1)(k-2)}{3},
 \qquad \forall d=2k\ge 8.$
\end{enumerate}
\end{theo}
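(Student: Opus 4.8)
The plan is to treat the two parts separately: $(1)$ is a one-line consequence of Theorem~\ref{thm:sextics}, and $(2)$ is an application of the technique of perturbing a multiple plane conic (Theorem~\ref{thm:bit}). For $(1)$, I would apply Theorem~\ref{thm:sextics} to the real scheme $\mathcal R=\langle 10\rangle$: it is neither $\emptyset$ nor $h$, it has $p=10$ even ovals and $n=0$ odd ovals, so $t_{max}(6)\ge t_{max}^{\langle 10\rangle}(6)\ge 2(10-0)+298=318$. Since $t_\CC(6)=\tfrac12\cdot 6\cdot 4\cdot 3\cdot 9=324$, this is exactly $t_{max}(6)\ge t_\CC(6)-6$, so $(1)$ is done.

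For $(2)$, I would perturb a $k$-fold real conic. Fix a real conic $C_0\subset\CP^2$ with non-empty real part, set $d=2k$, and apply Theorem~\ref{thm:bit} to $kC_0$; this produces a generic real plane curve $C$ of degree $d$, close to $kC_0$, together with the data needed to locate its real bitangents. The relevant dictionary, recalled in Sections~\ref{sec:real bit} and~\ref{sec:conic}, is the following: a neighborhood of $C_0$ in $\CP^2$ is identified with a neighborhood of the zero section of $\N_{C_0/\CP^2}\cong\O_{\PP^1}(4)$; the curve $C$ lifts to a curve $\widetilde C$ meeting every fiber of the bundle map $\N_{C_0/\CP^2}\to C_0$ in $k$ points; and the $2$-dimensional net of lines of $\CP^2$ is traded for the $1$-dimensional family of these fibers, which makes the pencil-of-lines techniques of \cite{O1,O2,O3,Ore2,IS,Br3,Lop} applicable. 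Under this dictionary the bitangents of $C$ are sorted by the line they degenerate to as the perturbation parameter tends to $0$: either a secant line of $C_0$ through two distinct points, or a tangent line of $C_0$; in the latter case the bitangent is entirely described by the local ``fiberwise degree $k$'' model of $C$ near a single point of $C_0$.

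The count then proceeds in two steps. First, the bitangents of $C$ that this method cannot render real are the ones intrinsic to the local models along $C_0$, and a Pl\"ucker-type bookkeeping for the fiberwise-degree-$k$ curve $\widetilde C$ — using the degree and the Euler characteristic of $\widetilde C$ together with the global Pl\"ucker relations for $C$ — should show that there are exactly $\tfrac16 d(d-2)(d-4)=\tfrac{4k(k-1)(k-2)}{3}=8\binom k3$ of them. Second, I would exhibit a chart realizing Theorem~\ref{thm:bit} that, at the same time, produces a non-singular curve with a prescribed real scheme \emph{and} positions the fiberwise data so that every one of the remaining bitangents is real; the existence of such a chart is precisely the sort of statement that the constructions behind Theorem~\ref{thm:sextics} are tailored to prove. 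Together these give $t_{max}(d)\ge t_\CC(d)-\tfrac16 d(d-2)(d-4)$, which becomes the asserted formula after substituting $t_\CC(2k)=2k(k-1)(4k^2-9)$. For $k=3$ this only yields $316<318$, which is precisely the phenomenon anticipated in the sentence preceding the statement and the reason part $(1)$ is recorded separately; an alternative, more combinatorial derivation of the same count is an induction on $k$ in which one conic layer is added at a time, each step preserving the reality of the bitangents constructed so far and of those newly created among the layers while forcing $8\binom{k-1}{2}=4(k-1)(k-2)$ new complex bitangents, so that the loss telescopes to $8\binom k3$.

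The main obstacle, in either approach, is the \emph{exact} bitangent count. One must prove that the number of non-real bitangents produced by the construction equals $\tfrac16 d(d-2)(d-4)$ and not merely that it is bounded by it; I expect this to reduce to a local computation near each tangent line of $C_0$, summed over the $\PP^1$ of tangent directions and reconciled with the Pl\"ucker relations for $C$. Correlatively, one must actually build a single chart for which \emph{all} the other bitangents are simultaneously real, which is a compatibility problem between the ``non-singular curve with prescribed topology'' constraints of Section~\ref{sec:conic} and the ``maximal number of real tangencies'' constraint; it is exactly this compatibility, and how efficiently it can be arranged, that decides how close the bound of Theorem~\ref{thm:asym1} comes to being sharp.
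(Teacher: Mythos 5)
Part (i) of your proposal is correct and coincides with the paper's argument: apply Theorem \ref{thm:sextics} to $\mathcal R=\langle 10\rangle$ with $p=10$, $n=0$.

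For part (ii) your overall strategy (perturb a $k$-fold conic and read off the real bitangents from Theorem \ref{thm:bit}) is the paper's, but the central step is missing and the substitute you offer for it does not work. You assert that the bitangents ``this method cannot render real'' number exactly $\tfrac16 d(d-2)(d-4)=8\binom{k}{3}$, to be established by a Pl\"ucker-type bookkeeping intrinsic to the local models along $\C_0$. This is not an intrinsic quantity of the method: by Theorem \ref{thm:bit}(ii), each tangent fiber $F_i$ with tangency point $q_i$ contributes one conjugate pair of non-real bitangents for every other intersection point $q_{i,l}$ of $F_i$ with the $k$-section that is either non-real or real but on the wrong side of $q_i$ relative to the sign of $t$, so the total depends entirely on which $k$-section you pick. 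Within the paper's own framework this is visible already at $d=6$: the $3$-sections of Corollary \ref{cor:max tang 3-sec} realizing $\langle 10\rangle$ produce a sextic with only $6$ non-real bitangents, not $8=8\binom{3}{3}$, so your claimed universal count is false as stated (and no Pl\"ucker relation can single out the value $8\binom{k}{3}$). What the paper actually does is exhibit an explicit $k$-section --- the Harnack curve of Proposition \ref{prop:asymconstr1}, built by the iteration $P_{k+1}=vP_k-\varepsilon_k\prod_i(u-u_{k,i})$ --- for which (a) all $d(d-2)$ vertical tangencies are real, so that all $\binom{d(d-2)}{2}$ lines $L_{i,j}$ deform to real bitangents, and (b) the tangency points are distributed among the fibers so that the number of wrong-side pairs is exactly $\sum_{i=1}^{k-2}4i(k-1-i)=\tfrac{2k(k-1)(k-2)}{3}$, whence $\tfrac{4k(k-1)(k-2)}{3}$ non-real bitangents. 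Your inductive ``one conic layer at a time'' variant is close in spirit to Harnack's construction and your telescoping of $8\binom{k-1}{2}$ is arithmetically consistent with the target, but the claim that each step forces exactly that many new complex bitangents while preserving the reality of all earlier ones is precisely the combinatorial verification you have omitted; without it (or without the explicit count for a specific $k$-section) the lower bound is not established. Note also that the logic is simpler than your ``compatibility problem'' framing suggests: one does not need an optimal chart making all renderable bitangents simultaneously real, only one explicit $k$-section together with an exact count of its real bitangents.
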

Part $(i)$ of Theorem \ref{thm:asym1} follows directly from
the lower bound $t_{max}^{\langle 10\rangle}(6)\ge 318$ obtained in
Theorem \ref{thm:sextics}.
We prove  part $(ii)$ of Theorem \ref{thm:asym1} by constructing  a
suitable family
of so-called \emph{Harnack curves} of even degree,
whose real
scheme is
depicted in Figure \ref{fig:harnack}. Real algebraic curves with this topological type  were originally
constructed by Harnack in \cite{Har}, and since then have appeared 
surprisingly  in several
mathematical contexts \cite{Mik11,MikRul01,KenOko06,Bru14b,BouCimTil23,BouCimTil23b}. 
\begin{figure}[h!]
\centering
\begin{tabular}{c} 
  \includegraphics[height=1cm, angle=0]{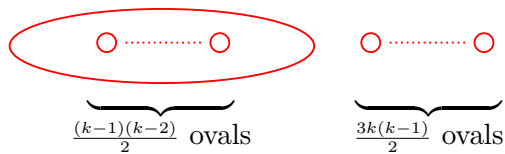}
  \put(-180,-5){$\smash{\underbrace{\qquad\qquad\quad
      }_{\frac{(k-1)(k-2)}{2}\mbox{ ovals}}}$}
  \put(-74,-5){$\smash{\underbrace{\qquad\qquad\quad
      }_{\frac{3k(k-1)}{2}\mbox{ ovals}}}$}
\\ \\ \\
\end{tabular}
\caption{Real scheme of Harnack curves of degree $2k$ in $\RP^2$.}
\label{fig:harnack}
\end{figure} 

As it can be seen from Theorems  \ref{thm:signed} and \ref{thm:sextics}, 
it seems relevant to
study the possible numbers of bitangents of real
plane algebraic curves within a fixed real scheme in addition to fixing a degree $d$.
For example, one may wonder if a poor real scheme imposes the existence of only few real
bitangents. The following result shows that this is not the case.
\begin{theo}\label{thm:asym2}
  There exists a sequence $(C_{d})_{d\ge 2}$ of real plane algebraic curves
of  even degree $d$ with a connected real part and with
$t_\CC(d) +O(d^3)$ real bitangents
  (i.e. all bitangents of $C_d$ are asymptotically real).
\end{theo}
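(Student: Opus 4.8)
The plan is to construct the curves $C_d$ by perturbing a $k$-fold conic, $d=2k$, and to read off the bitangent count from the control provided by Theorem \ref{thm:bit}. Fix a non-singular real conic $C_0\subset\CP^2$ whose real part is a smooth oval, together with a tubular neighborhood of $C_0$ identified with (an affine piece of) the normal bundle $\mathcal N_{C_0/\CP^2}\cong\mathcal O_{\PP^1}(4)$. By Theorem \ref{thm:bit} (see Section \ref{sec:conic}), a suitable real perturbation of $kC_0$ yields a non-singular real algebraic curve $C_d$ of degree $d$, supported in this neighborhood, whose real scheme and whose position with respect to the conic-bundle projection $\mathcal N_{C_0/\CP^2}\to C_0$ --- equivalently, with respect to the pencil of lines tangent to $C_0$ --- are prescribed by combinatorial perturbation data. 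The whole game is to choose these data so as to achieve simultaneously that $\RR C_d$ be connected and that all but $O(d^3)$ of the bitangents of $C_d$ be real.

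For the topology, I would take the perturbation so that, inside the annular neighborhood of $\RR C_0$, the $k$ real sheets of $\RR(kC_0)$ reconnect into a single closed curve winding once around the annulus --- a long ``wavy'' oval --- and so that $\RR C_d$ is empty outside this annulus. (When $k$ is odd this reconnection is already visible on $\{q^k=\epsilon\}$, for $q$ a real equation of $C_0$ and $\epsilon>0$ small: its real locus is a single oval close to $C_0$, even though its complexification is a disjoint union of $k$ conics; one then perturbs further, generically and keeping the real locus connected.) This is the same type of sign and ordering choice in the patchworking-like construction underlying Theorem \ref{thm:bit} that one uses to build maximal curves, here tuned to produce one component rather than many. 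A convenient feature to aim for is that the oval of $C_d$ carry the maximal number $d(d-2)$ of real inflection points, so that by Klein's Formula \eqref{eq:klein} every real bitangent of $C_d$ is tangent to $\RR C_d$ at two real points; in any case the number $t_0$ of real bitangents not of this type is $O(d^2)$ by \eqref{eq:klein}, hence negligible at our scale.

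For the bitangent count, which is the heart of the matter, I would analyze the bitangents of $C_d$ through the degeneration $C_d\rightsquigarrow kC_0$, for instance by tracking the nodes of the dual curve $C_d^\vee$. The leading term $\tfrac12 d^4$ of $t_\CC(d)$ should be carried by bitangents whose reality is governed by one-dimensional data --- the position of $C_d$ with respect to the pencil of tangent lines to $C_0$, and the pairwise interactions of the $k$ sheets --- and using the tools for plane curves in a pencil cited in the introduction (\cite{O1,O2,O3,Ore2,IS,Br3,Lop}) together with the control of Theorem \ref{thm:bit}, these can be made real for an appropriate choice of data, by an estimate broadly parallel at leading order to the one underlying the Harnack construction of Theorem \ref{thm:asym1}$(ii)$. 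The remaining bitangents are localized near the finitely many exceptional fibers of the conic-bundle structure and near the finitely many collisions between sheets: they organize into $O(d)$ families of size $O(d^2)$, hence number $O(d^3)$, and we allow those to be complex. Altogether $C_d$ then has $t_\CC(d)-O(d^3)$ real bitangents.

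The hard part will be the compatibility of the two requirements --- producing a single perturbation of $kC_0$ that both pinches the real part into one oval and preserves the reality of the generic bitangents --- together with a count precise enough to confirm that the uncontrolled bitangents number only $O(d^3)$ and not $O(d^4)$. One must also verify that the constructed $C_d$ is generic in the sense of the introduction, i.e. that $C_d^\vee$ acquires only nodes and cusps, which should follow from a further small perturbation within the family.
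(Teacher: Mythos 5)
Your overall strategy is the paper's: perturb $k\C_0$ via Theorem \ref{thm:bit} and arrange that (a) the real part of the resulting degree-$d$ curve is connected and (b) asymptotically all bitangents are real. But the proposal stops short of exactly the step that carries the content. Through Theorem \ref{thm:bit}, requirement (b) reduces to a statement about the $k$-section $C$ alone: the $\binom{d(d-2)}{2}=t_\CC(d)+O(d^3)$ bitangents of type $L_{i,j}$ are real precisely when the tangent fibers $F_i,F_j$ are both real (or complex conjugate), while the remaining $2(k-2)d(d-2)=O(d^3)$ bitangents of type $L_i$ can simply be discarded. So what must actually be produced is a real $k$-section whose real part is connected \emph{and} which has $d(d-2)+O(d)$ real vertical tangent lines. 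You correctly flag this compatibility as ``the hard part,'' but you do not resolve it, and the one concrete suggestion you make --- starting from $\{q^k=\epsilon\}$ and perturbing generically while keeping the real locus connected --- points the wrong way: that curve is a disjoint union of $k$ conics ($k$ constant sections of $\O_{\CP^1}(4)$, only one of them real for $k$ odd), so its real locus is a single circle with \emph{no} vertical tangencies, and nothing in a generic small perturbation forces more than $O(1)$ of the $4k(k-1)$ tangent fibers to become real; making them real requires a carefully designed, non-generic perturbation. The digression via Klein's formula and inflection points gives no lower bound on the number of real bitangents (it only constrains their type $t_0$ versus $t_2$) and does not contribute to the count.

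The paper's Proposition \ref{prop:asymconstr2} supplies the missing construction. One starts from the union of $k$ real $1$-sections (graphs of degree-$4$ polynomials) arranged so that all $4\binom{k}{2}=2k(k-1)$ pairwise intersection points are real, and smooths it by Brusotti's theorem one node at a time, choosing at each node the smoothing that turns it into two real vertical tangencies while keeping the real part connected (this is done for $k$ odd; the even case is obtained by adjoining one further $1$-section to the curve built for $k-1$). This yields $r_k=d(d-2)+O(d)$ real tangent fibers with connected real part, and Theorem \ref{thm:bit}(i) applied to the curve of Proposition \ref{prop:conic2} then gives $\binom{r_k}{2}=t_\CC(d)+O(d^3)$ real bitangents. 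Until you exhibit such a $k$-section (or an equivalent family), your argument does not close; with it, the rest of your outline is correct and matches the paper.
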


\subsection{Outline of the paper}

We prove Theorem \ref{thm:signed}  in
Section \ref{sec:signedc}, after reviewing basic facts about
integration with respect to the Euler characteristic and Hilbert schemes.
 We state precisely and prove
Theorem \ref{thm:bit} in 
Section \ref{sec:conic}, and present  various applications in
Section \ref{sec:constr}. We conclude by outlining in
Section \ref{sec:persp}
some possible developments of the material presented in this paper.

\subsection{Acknowledgments}

We are grateful to Ilia Itenberg and Oleg Viro for their comments and advices.
Part of this work has been achieved during the visit of C.G. and
T.B. at Nantes
Université. C.G was 
 funded by  ECOS NORD 298995, CONACyT 282937, CONACyT I1200/381/2019. We
 thank  Laboratoire de Mathématiques Jean Leray
 for excellent working
 conditions. The authors also
 thanks the France 2030 framework program Centre Henri Lebesgue
 ANR-11-LABX-0020-01 for creating an attractive mathematical environment.

\section{Signed enumeration of real bitangents}\label{sec:signedc}
Our proof of Theorem \ref{thm:signed}
in inspired by the two works \cite{kool2011short} by
Kool, Shende and Thomas, and \cite{Viro-Euler} by Viro.
Namely, we consider the double covering
$\pi:S\to \CP^2$ ramified along a generic algebraic curve $C$ of
even degree $d$, and the line bundle $\pi^*\O_{\CP^2}(1)$ on
$S$. Let  $V\subset |\pi^*\O_{\CP^2}(1)|$ be the linear system\footnote{In fact
$V= |\pi^*\O_{\CP^2}(1)|$ if $d\ge 4$, but this will play no role.}
that parameterizes
curves $D$ in $S$ of the form $\pi^{-1}(L)$ with $L$ a line in
$\CP^2$. The
\emph{universal curve} $\CCC$ of $V$ is defined as
\[
\CCC=\Big\{(D,p)\ ,\ D\in V\mbox{ and } p\in D
\Big\}\subset V\times S,
\]
and comes with two tautological projections
\[
\begin{tikzcd}
		 & \CCC \arrow{dl}[swap]{\nu_1} \arrow{dr}{\nu_2} &  \\
		{V} &  & {S} \\
\end{tikzcd}
\]
The \emph{Hilbert square} $X^{[2]}$ of an algebraic variety $X$
 parameterizes the $0$-dimensional
closed sub-schemes of $X$
of length $2$. The \emph{relative Hilbert square} $\CCC^{[2]}$ is defined as
\[
\CCC^{[2]}=\left\{(D,p)\ ,\ D\in V\mbox{ and } p\in D^{[2]}
\right\}\subset V\times S^{[2]},
\]
and comes again with two tautological projections
\[
\begin{tikzcd}
		 & \CCC^{[2]} \arrow{dl}[swap]{\nu_1} \arrow{dr}{\nu_2} &  \\
		{V} &  & {S^{[2]}} \\
\end{tikzcd}
\]
When $C$ is a real curve, the surface $S$ carries two tautological
real structures, and any choice of one of them induces real structures
on $V$, $\CCC$,  $S^{[2]}$ and $\CCC^{[2]}$, which turn $\nu_1$ and $\nu_2$ into
real maps.
In particular these maps provide two expressions for the
topological Euler characteristic of $\RR\CCC^{[2]}$. Equating these two
expressions  proves Theorem \ref{thm:signed}.

\medskip
This section is devoted to detailing the 
strategy we have just outlined.
We start by recalling all needed facts about
integration with respect to Euler characteristic,
double coverings of $\CP^2$, and
Hilbert squares. 
Then we prove
Theorem \ref{thm:signed}. We end the section by indicating how our
technique to prove Theorem \ref{thm:signed} 
can be adapted to recover Plücker and Klein formulas in the special
case of even degree curves.

\subsection{Integration with respect to Euler characteristic}\label{sec:int euler}

We start by  recalling  principles of integration with respect to  Euler
characteristic, which
can be traced back at least to \cite{McPher74,Viro-Euler}.
We will  use extensively this technique throughout this section, and 
as we will see in many applications,
it provides a powerful tool to compute Euler characteristic of
algebraic varieties.

We will use the two following Euler characteristics: the topological one, usually denoted by $\chi$, and the one with compact support, usually denoted by $\chi_c$. It follows in particular from resolution of singularities, see for example
\cite[page 141]{Ful} or \cite[Lecture 1]{Peters10},
 that given three complex
algebraic varieties $X,Y,$ and $Z$, one has $\chi_c(X)=\chi(X)$ and
the additional following properties:
\begin{itemize}
\item Additivity: $\chi(X)=\chi(X\setminus Y)+\chi(Y)$  if
  $Y$ is a locally closed subvariety of $X$;
\item Multiplicativity: $\chi(X)=\chi(Y)\chi(Z)$ if
  $X$  is a locally trivial fibration
  with fiber $Z$ over the base $Y$ (in the topological sense).
\end{itemize}
This immediately implies next proposition.
\begin{prop}[(Integration with respect to Euler characteristic for
    complex algebraic varieties)]\label{prop:int}
  Let $f:X\to Y$ be an algebraic morphism between two complex algebraic
  varieties.
  Suppose that there exist two finite
  stratifications $\bigsqcup_{i=1}^n X_i$ and  $\bigsqcup_{i=1}^n Y_i$
into locally closed
  subvarieties  of $X$ and $Y$, respectively, 
   such that $f|_{X_i}:X_i\to Y_i$ is a locally trivial
  fibration with fiber $Z_i$ for $i=1,\ldots,n$. Then one has
  \[
  \chi(X)=\sum_{i=1}^n\chi(Y_i)\chi(Z_i).
  \]
\end{prop}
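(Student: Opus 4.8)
The plan is to derive the formula directly from the additivity and multiplicativity properties of the Euler characteristic recalled above, together with the identity $\chi_c=\chi$ for complex algebraic varieties. The whole content of the proposition is that these two properties combine well under a stratification compatible with the morphism.

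First I would upgrade the two-term additivity to finite additivity over a stratification: if $X=\bigsqcup_{i=1}^n X_i$ with each $X_i$ locally closed in $X$, then $\chi(X)=\sum_{i=1}^n\chi(X_i)$. The identity stated above, $\chi(X)=\chi(X\setminus Y)+\chi(Y)$, concerns a single locally closed $Y$, and iterating it requires a small amount of care because the complement of a locally closed subvariety need not itself be locally closed. I would deal with this either by working with the compactly supported Euler characteristic $\chi_c$, for which additivity over an arbitrary finite partition into locally closed (more generally, constructible) pieces is part of the standard package, and then invoking $\chi_c=\chi$; or, equivalently, by first refining the given stratification so that the closure of each stratum is a union of strata, which lets one peel the strata off one at a time in an order compatible with the resulting partial order and conclude by induction on $n$. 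Either route gives $\chi(X)=\sum_{i=1}^n\chi(X_i)$.

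Then, for each $i$, the hypothesis that $f|_{X_i}:X_i\to Y_i$ is a locally trivial fibration with fiber $Z_i$ lets me apply the multiplicativity property to get $\chi(X_i)=\chi(Y_i)\chi(Z_i)$. Summing over $i$ and using the previous step yields
\[
\chi(X)=\sum_{i=1}^n\chi(X_i)=\sum_{i=1}^n\chi(Y_i)\chi(Z_i),
\]
which is the assertion. There is no genuine obstacle here: the only point that needs a word of justification is the passage from two-term to finite additivity, and that becomes routine as soon as one phrases it in terms of $\chi_c$; everything else is a direct invocation of the additivity and multiplicativity properties listed just before the statement.
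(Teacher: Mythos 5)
Your proof is correct and follows essentially the same route as the paper, which simply asserts that the formula is an immediate consequence of the additivity and multiplicativity of $\chi=\chi_c$ for complex algebraic varieties. Your extra remark on upgrading two-term additivity to finite additivity over a stratification (via $\chi_c$ on constructible pieces, or by ordering the strata) is a reasonable point of care that the paper leaves implicit.
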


If we now assume that $X,Y$, and $Z$ are three real algebraic varieties, and
that we are given  semialgebraic subsets $U_{\RR X}$, $U_{\RR Y}$, and
$U_{\RR Z}$ of
$\RR X$, $\RR Y$, and $\RR Z$, respectively. Then one
has
\begin{itemize}
\item Additivity: $\chi_c(U_{\RR X})=\chi_c(U_{\RR X}\setminus
  U_{\RR Y})+\chi_c(U_{\RR Y})$  if
  $U_{\RR Y}$ is a   semialgebraic subset of $U_{\RR X}$;
\item Multiplicativity: $\chi_c(U_{\RR X})=\chi_c(U_{\RR Y})\chi_c(U_{\RR Z})$ if
  $U_{\RR X}$  is a locally trivial fibration
  with fiber $U_{\RR Z}$ over the base $U_{\RR Y}$.
\end{itemize}
Note that $\chi$ does not satisfy the additivity property in the real setting, as one can
see from the following situation:
\[
\chi(\RP^1)=0\ne 2=\chi(\RP^1\setminus \{p\})+ \chi(\{p\}).
\]

\begin{prop}[(Integration with respect to Euler characteristic for
    real algebraic varieties)]\label{prop:int r}
  Let $f:X\to Y$ be a real algebraic morphism between two real algebraic
  varieties.
  Suppose that there exist two finite
  stratifications $\bigsqcup_{i=1}^n U_{i}$ and  $\bigsqcup_{i=1}^n V_i$
into semialgebraic  subsets  of $\RR X$ and $\RR Y$, respectively, 
   such that $f|_{U_i}:U_i\to V_i$ is a locally trivial
  fibration with semialgebraic fibers $W_i$ for $i=1,\ldots,n$. Then one has
    \[
  \chi_c(\RR X)=\sum_{i=1}^n\chi_c(V_i)\chi_c(W_i).
  \]
\end{prop}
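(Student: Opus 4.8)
The statement to prove is Proposition~\ref{prop:int r}, the real-algebraic analogue of Proposition~\ref{prop:int}. The plan is to mimic the proof of Proposition~\ref{prop:int} but replace the topological Euler characteristic $\chi$ by the compactly supported one $\chi_c$ throughout, since only $\chi_c$ retains additivity and multiplicativity over semialgebraic sets (as the excerpt already stresses with the $\RP^1$ example). First I would invoke the hypothesis: we are given finite semialgebraic stratifications $\RR X=\bigsqcup_{i=1}^n U_i$ and $\RR Y=\bigsqcup_{i=1}^n V_i$ with $f|_{U_i}\colon U_i\to V_i$ a locally trivial fibration with semialgebraic fiber $W_i$. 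By the additivity property of $\chi_c$ on a finite partition into semialgebraic subsets (which follows by induction from the two-piece additivity stated just before the proposition), we get $\chi_c(\RR X)=\sum_{i=1}^n\chi_c(U_i)$.

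Next I would apply the multiplicativity property of $\chi_c$ to each fibration $f|_{U_i}\colon U_i\to V_i$: since $U_i$ is a locally trivial fibration over $V_i$ with fiber $W_i$, multiplicativity gives $\chi_c(U_i)=\chi_c(V_i)\,\chi_c(W_i)$. Substituting into the previous identity yields
\[
\chi_c(\RR X)=\sum_{i=1}^n \chi_c(U_i)=\sum_{i=1}^n \chi_c(V_i)\,\chi_c(W_i),
\]
which is exactly the claimed formula. The one subtlety to address is why additivity extends from two pieces to a finite partition: given $\RR X=\bigsqcup_{i=1}^n U_i$ with the $U_i$ semialgebraic, one writes $U_{\geq j}:=\bigsqcup_{i\geq j}U_i$, which is semialgebraic, observes that $U_{\geq j}=U_j\sqcup U_{\geq j+1}$ with $U_{\geq j+1}$ a semialgebraic subset of $U_{\geq j}$, and applies two-piece additivity repeatedly (descending induction on $j$). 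A matching remark is needed for multiplicativity only in the trivial sense that it is already stated for a single fibration, so no iteration is required there.

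The main (and essentially only) obstacle is a foundational one rather than a computational one: one must make sure that all the objects involved—the strata $U_i$, $V_i$, the fibers $W_i$—are genuinely semialgebraic and that $\chi_c$ is well-defined and enjoys the stated additivity and multiplicativity on the semialgebraic category. These facts are standard (they follow from the existence of semialgebraic triangulations and the cell-decomposition / trivialization theorems for semialgebraic maps, see e.g. the references already cited in the excerpt), and the excerpt explicitly grants us the two-piece additivity and multiplicativity of $\chi_c$ as bullet-pointed properties, so the proof is a short formal bookkeeping argument built on top of them. No genuinely new idea is needed beyond transcribing the complex argument with the appropriate invariant.
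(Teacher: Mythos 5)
Your argument is exactly the one the paper intends: the proposition is stated as an immediate consequence of the additivity and multiplicativity of $\chi_c$ on semialgebraic sets (additivity over the strata $U_i$, then multiplicativity for each locally trivial fibration $f|_{U_i}$), and the paper gives no further proof. Your proposal is correct and follows the same route, with the extension of two-piece additivity to finite partitions being a harmless routine induction.
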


\begin{remark}
 One can  recast the additivity and multiplicativity of
 $\chi_c$ by saying that it provides a \emph{motivic invariant} on the
 \emph{Grothendieck rings} of complex algebraic varieties and real 
 semialgebraic varieties. This is not only a fancy way to formulate
 things, since there exists other motivic invariants, for example
the \emph{compactly supported $\mathbb A^1$-Euler characteristic} 
\cite{AMBOWZ22} for
fields of characteristic 0.
Proposition \ref{prop:int} has an immediate version for any motivic
invariant, replacing ``locally trivial fibration'' by ``trivial fibration''.
The fact that the multiplicativity property for $\chi$ and $\chi_c$ holds 
more generally for locally trivial
fibrations makes the computations easier, nevertheless nothing
prevent in principle to generalize the content of this section to, for example, 
compactly supported $\mathbb A^1$-Euler characteristic.
\end{remark}

%\st{Compact support Euler characteristic and topological Euler characteristic agree if the space is compact. Therefore, as all the  spaces of interest to us in the remainder of the paper are compact,  it does not matter which Euler characteristic we use. Moreover, as   $\chi(X)=\chi_c(X)$ for complex algebraic varieties, it only matters  when we use a real stratification, which requires the one with  compact support to get additivity. Thus, we now stick with the  compact support Euler characteristic, which we denote by $\chi$ to  lighten notations.}

%\Rwan{Je pense que ce paragraphe n'est pas tout à fait correct, et  qu'il vaut mieux de toutes façons éviter de changer les notations en  cours de route. Mais rien n'est grave, puisque dans le cas des  surfaces qui nous intéressent, on a toujours égalité entre les deux  caractéristiques d'Euler. Donc je propose de simplement supprimer  ce paragraphe,  et de laisser les calculs tels qu'ils sont écrit sans rien changer.}

\subsection{Double coverings of $\CP^2$}\label{sec:double}
        Let $C$ be a non-singular algebraic curve  of even degree
        $d=2k$ in
        $\CP^2$ given by the equation $P(x,y,z)=0$.
        The complex algebraic surface $S$ in the weighted projective plane
        $\CP^3(1,1,1,k)$ defined by the equation
        \[
        	t^2-P(x,y,z)=0
\]
is non-singular. %\st{By } 
Forgetting the $t$-coordinate %\st{we can} 
exhibits $S$ as a double
covering $\pi:S\to \CP^2$ ramified along the curve $C$. Note that $S$
only depends on $C$ and not on the equation $P$:
given another equation $\lambda P(x,y,z)=0$ of $C$, with
$\lambda\in\CC^*$, the change of variable $t=\sqrt \lambda t$
identifies the two surfaces defined by the equations
$t^2-P(x,y,z)=0$ and 	$t^2-\lambda P(x,y,z)=0$.

Let us illustrate integration with respect to Euler characteristic by
computing $\chi(S)$. The map $\pi$ is two-to-one over
$\CP^2\setminus C$ and one-to-one over $C$,  so Proposition
\ref{prop:int} gives
	\begin{align*}
	  \chi(\CC S) = & 2\chi(\CC P^2\setminus C)+\chi(C) 
          \\ =& 2\chi(\CC P^2)-\chi(C)
	\\= & 6 +d(d-3).
	\end{align*}

 The surface $S$ comes endowed with the line bundle
 $\pi^*\O_{\CP^2}(1)$, and let $V\subset |\pi^*\O_{\CP^2}(1)|$ be the 2-dimensional
 linear system
corresponding to  the lifts of lines in $\CC P^2$ under the
covering map
 $\pi$.  A general curve $D=\pi^{-1}(L)$ in $V$ is transverse to the ramification
locus $C$, and so is a double cover of $L=\CP^1$ ramified at $d$
points. In particular $D$ is non-singular and
\[
\chi(D)=2\chi(\CP^1)-d=4-d.
\]
The curve $D=\pi^{-1}(L)\in V$ is  singular when $L$ is not transverse
to $C$. If the curve $C$ is generic, then a line $L$ in $\CP^2$
that is not transverse to $C$ is of one of the three following types:
	\begin{itemize}[label=$\circ$]
	\item $L$ has an ordinary tangency at a single point $p$ in
          $C$, and $D$ has a node at $p$ as its only singularity;

	\item  $L$ is tangent to $C$ at an inflection point $p$ of $C$,
          and $D$ has a cusp at $p$ as its only singularity;

	\item $L$ is a  bitangent to $C$ at the points $p$ and $q$, and $D$
          has
          two nodes at $p$ and $q$ as its only singularities.
	\end{itemize}
This description of curves in $V$ provides a stratification of $V$
into the following locally closed subsets:
\[
\begin{array}{ll}
  U_0=\{D\in V \mbox{ is non-singular} \}\qquad &
U_1=\{D\in V \mbox{ has a single node} \}
\\  U_2=\{D\in V \mbox{ has a cusp} \}\qquad \qquad &
U_{11}=\{D\in V \mbox{ has two nodes} \}.
\end{array}
\]
By construction, the dual curve $C^\vee$ of $C$ is isomorphic to
$U_1\cup U_{2}\cup U_{11}$. In particular $U_1$ consists of
non-singular points of $C^\vee$, while $U_2$ and $U_{11}$ are cusps
and nodes of $C^\vee$, respectively.

\bigskip
Suppose now that the curve $C$ is defined by a real polynomial
$P\in\RR[x,y,z]$. As the degree is even, the sets $\{\pm P>0\}$ are well-defined. Thus, the real curve $\RR C$ splits $\RR P^2$ into two components $\{\pm P>0\}$, one of them being non-orientable. Let $\RR P^2_-$ be the non-orientable component and $\RR P^2_+$ its complement. We assume that $P$ is chosen such that $\RR P^2_+=\{P>0\}$. The surface $S$ can be equipped with the two real
structures given by the two equations
\[
        	t^2\mp P(x,y,z)=0
\]
in $\CP^3(1,1,1,k)$,
and we denote by $S_{\pm}$ the corresponding real algebraic surfaces.
By construction $\RR S_{\pm}$ is the double cover of $\RP^2_\pm$
ramified along $\RR C$. In particular 
\[
\chi(\RR S_\pm)=2\chi(\RP^2_\pm)+\chi(\RR C)=2\chi(\RP^2_\pm).
\]
Note that  $\chi(\RR P^2_+)+\chi(\RR P^2_-)=\chi(\RR P^2)=1$. 

The linear system $V$ is also real, and the above stratification of
$V$ induces a stratification of $\RR V$. The stratum
$\RR U_{11}$ may be refined, since a real node of 
a real algebraic curve
may be of two types depending on its local real equation:
$y^2-x^2=0$ (hyperbolic node) or $x^2+y^2=0$ (elliptic node).
 The two local branches of the curve 
 are both real at a hyperbolic node, and
are complex conjugated at an elliptic one.
We refine accordingly the stratum $\RR U_{11}$ as
\[
\RR U_{11}^+=\{\mbox{elliptic nodes of }\RR C^\vee \}
\qquad\mbox{and}\qquad
\RR U_{11}^-=\{\mbox{hyperbolic nodes of }\RR C^\vee \}.
\]
The strata $\RR U_{11}^+$ and $\RR U_{11}^-$ 
correspond to real bitangents of $C$ of
type $t_0$ and $t_2$, respectively.
The stratification of $\RR V$ is depicted
in Figure \ref{figure-stratification-discriminant}. 
	\begin{figure}
	\begin{center}
          \includegraphics[scale=1.5]{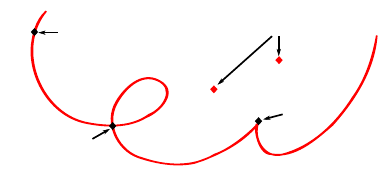}
	  \put(-230,110){curves with}\put(-230,100){a single node}
	  \put(-270,20){curves with}\put(-277,10){two real nodes}
	  \put(-145,115){curves with two complex}\put(-129,105){conjugated nodes}
   	  \put(-80,60){curves with}\put(-80,50){a cusp}
	\caption{\label{figure-stratification-discriminant}Stratification
       of $\RR V$.}
	\end{center}
	\end{figure}

\subsection{Hilbert squares and their Euler characteristic}\label{sec:hilbsch}
The \emph{$n$-pointed Hilbert scheme} $X^{[n]}$ of an algebraic variety
$X$ parameterizes  closed sub-schemes of $X$
of dimension $0$ and length $n$.
It comes equipped with  the \emph{Hilbert-Chow morphism}
$\tau:X^{[n]}\to X^{(n)}$, where
$X^{(n)}=X^n/\mathfrak S_n$ is the $n^{th}$-symmetric power of $X$, that
maps a length $n$ sub-scheme to its support.
The Hilbert scheme $X^{[n]}$
 is a compactification
of the configuration space
of $n$ distinct points on $X$ when $X$ is proper. For example, one has $X^{[1]}=X$, and
the Hilbert
scheme $C^{[n]}$ of a non singular algebraic curve $C$ is nothing but
 $C^{(n)}$. In particular it is
non-singular. All Hilbert schemes  $S^{[n]}$ of a non-singular surface
$S$
are also non-singular by \cite{Fog68}.
More general Hilbert schemes are quite complicated, luckily
the particular ones  needed in this paper
have a simple description.

The  Hilbert square $X^{[2]}$
of a non-singular algebraic variety $X$ is
obtained as follows: the involution $\sigma$ on $X\times X$ permuting the
two factors extends to 
its blow-up 
%of $X\times X$
along the diagonal $\Delta$, and
the quotient is isomorphic to $X^{[2]}$. In particular, points
in $\tau^{-1}(X^{(2)}\setminus\Delta)$ 
correspond to unordered pairs of distinct points on $X$,
while  points in $\tau^{-1}(\Delta)$  correspond to
the choice of a point $p$ on $X$ 
along
with a tangent direction in $\mathbb P(T_pX)$.

\begin{example}
  If $S$ is a non-singular algebraic surface, then one has
  \begin{align*}
  \chi(S^{[2]})=&\frac{\chi([S\times S]\setminus\Delta)}2
  +\chi(\CP^1)\chi(S).
  \\ =& \frac{\chi(S)^2+3\chi(S)}2.
  \end{align*}
\end{example}

It follows from \cite{ran2005note} and \cite{lee2012note} that the
same description of $C^{[2]}$ holds true for a  singular
algebraic curve
with
nodes and cusps as its only singularities. Let us assume 
 that $C$ is contained in a non-singular algebraic surface $S$ to
 lighten up he exposition.
Recall that  nodes and cusps of a complex algebraic curve are points
for which the curve admits the local equation 
\[
y^2+x^2=0 \qquad\mbox{and}\qquad
y^2+x^3=0
\]
in $\CC^2$, respectively. In particular $C$ has a 2-dimensional
tangent space at such singular point, and we \st{see} have that
 $\tau^{-1}(\Delta)\subset C^{[2]}$
 is homeomorphic to
$\mathrm{bl}^{-1}(C)$, where $\mathrm{bl}:\widetilde S\to S$ is the blow-up of $S$ at
all singular points of $C$.
Thanks to this description, we have the following lemma.
\begin{lem}\label{lem:cx ec nc curve}
Given $C$  a compact complex algebraic curve of arithmetic genus $g$,
with $k$ nodes as its only singularities, one
 has
\[
\chi(C^{[2]})={{2g-2}\choose{2}} +k(2-2g) + \frac{k(k+3)}{2}.
\]
Given $C$  a compact complex algebraic curve of arithmetic genus $g$,
with a  cusp as its only singularity, one
 has
\[
\chi(C^{[2]})={{2g-2}\choose{2}} +4(2-g).
\]
\end{lem}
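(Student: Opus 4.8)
The plan is to compute $\chi(C^{[2]})$ by decomposing $C^{[2]}$ according to the Hilbert--Chow morphism $\tau : C^{[2]} \to C^{(2)}$ and using additivity and multiplicativity of $\chi$ (Proposition \ref{prop:int}). The key geometric input, recalled just before the statement, is that $\tau^{-1}(\Delta)$ is homeomorphic to $\mathrm{bl}^{-1}(C) \subset \widetilde S$, where $\mathrm{bl}:\widetilde S \to S$ blows up $S$ at the singular points of $C$. So I would split
\[
\chi(C^{[2]}) = \chi\big(\tau^{-1}(C^{(2)}\setminus\Delta)\big) + \chi\big(\tau^{-1}(\Delta)\big),
\]
and treat the two summands separately, in each of the two cases (nodes, or one cusp).

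**The off-diagonal part.** Over $C^{(2)}\setminus\Delta$ the morphism $\tau$ is an isomorphism, so this contribution is $\chi(C^{(2)}\setminus\Delta)$. Now $C^{(2)} = (C\times C)/\mathfrak S_2$, and $\chi(C\times C) = \chi(C)^2$; removing the diagonal and passing to the quotient gives $\chi(C^{(2)}\setminus\Delta) = \tfrac12(\chi(C)^2 - \chi(C))$ since the $\mathfrak S_2$-action on $(C\times C)\setminus\Delta$ is free (this is the standard computation used already in the Hilbert-square example for surfaces). For a compact curve of arithmetic genus $g$ with $k$ nodes as its only singularities, the normalization $\widetilde C$ has genus $g-k$, hence $\chi(\widetilde C) = 2 - 2(g-k)$ and $\chi(C) = \chi(\widetilde C) - k = 2 - 2g + k$; for a curve with a single cusp, $\widetilde C$ has genus $g-1$ and $\chi(C) = \chi(\widetilde C) - 1 = 2 - 2g + 1 = 3 - 2g$. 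Substituting $\chi(C) = (2g-2) \mapsto$ the appropriate value and simplifying $\tfrac12(\chi(C)^2-\chi(C)) = \binom{\chi(C)}{2}$ will produce the binomial term, shifted by the node/cusp corrections.

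**The diagonal part.** Here I use $\tau^{-1}(\Delta) \cong \mathrm{bl}^{-1}(C)$. Away from the singular points, $\mathrm{bl}$ is an isomorphism, and over each node or cusp it replaces the point by an exceptional $\PP^1$; but I must be careful that $\mathrm{bl}^{-1}(C)$ is the preimage of the (possibly singular) curve $C$, i.e. the union of the strict transform $\widetilde C$ and the exceptional divisors. For $k$ nodes: $\mathrm{bl}^{-1}(C) = \widetilde C \cup E_1\cup\cdots\cup E_k$, where $\widetilde C$ is the normalization (smooth, genus $g-k$) meeting each $E_i \cong \PP^1$ in two points. By additivity, $\chi(\mathrm{bl}^{-1}(C)) = \chi(\widetilde C) + \sum_i \chi(E_i \setminus (E_i\cap\widetilde C)) = (2-2(g-k)) + k\cdot(2-2) = 2-2(g-k)$. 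Hmm — that gives $2-2g+2k$, and combined with the off-diagonal term $\binom{2-2g+k}{2}$ this should be reconciled with the claimed $\binom{2g-2}{2} + k(2-2g) + \tfrac{k(k+3)}{2}$; I would expand $\binom{2-2g+k}{2} + (2-2g+2k)$ and check it equals the stated expression (a routine polynomial identity in $g$ and $k$). For a single cusp: $\widetilde C$ is smooth of genus $g-1$ and meets the single exceptional $\PP^1$ in one point, so $\chi(\mathrm{bl}^{-1}(C)) = (2-2(g-1)) + (2-1) = 3-2g+2 = 5-2g$; wait, let me recompute — $\chi(\widetilde C) + \chi(E\setminus(E\cap\widetilde C)) = (4-2g) + (2-1) = 5-2g$. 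Then $\chi(C^{[2]}) = \binom{3-2g}{2} + (5-2g)$, which I would verify equals $\binom{2g-2}{2} + 4(2-g)$.

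**Main obstacle.** The conceptual steps are short; the real work — and the place where a sign or a factor can slip — is (a) getting the genus of the normalization and hence $\chi(C)$ right in each singular case, and (b) correctly identifying $\tau^{-1}(\Delta)$ with $\mathrm{bl}^{-1}(C)$ as the \emph{total} preimage (strict transform plus exceptional curves), not just the strict transform, and computing its Euler characteristic with the correct incidence pattern ($E_i$ meets $\widetilde C$ in $2$ points at a node, in $1$ point at a cusp). Once those are pinned down, the final identity is a mechanical expansion of binomials that I would simply carry out and match to the two displayed formulas.
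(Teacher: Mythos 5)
Your decomposition is exactly the paper's: split $C^{[2]}$ via the Hilbert--Chow morphism into the off-diagonal part $\tfrac12(\chi(C)^2-\chi(C))$ and the diagonal part $\tau^{-1}(\Delta)\cong\mathrm{bl}^{-1}(C)$, and your node case is correct (your diagonal contribution $2-2g+2k$ agrees with the paper's $\chi(C)+k$, and the expansion does give the stated formula).

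However, the cusp case contains a genuine error. You write $\chi(C)=\chi(\widetilde C)-1=3-2g$, i.e.\ you apply the node correction to the cusp. But a cusp is unibranch: the normalization map is a homeomorphism over it, so $\chi(C)=\chi(\widetilde C)=2-2(g-1)=4-2g$ (this is the value $2-2g+2$ the paper starts from). Your diagonal contribution $5-2g$ is right, but with your off-diagonal term $\binom{3-2g}{2}$ the total is $2g^2-7g+8$, whereas the correct total $\binom{4-2g}{2}+(5-2g)=2g^2-9g+11$ is what equals $\binom{2g-2}{2}+4(2-g)$. So the verification you deferred (``which I would verify equals \dots'') would in fact fail, and the fix is to replace $\chi(C)=3-2g$ by $\chi(C)=4-2g$ in the off-diagonal computation.
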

\begin{proof}
  In the first case, one has  $\chi(C)=2-2g+k$. 
Integrating with respects to the Euler characteristic, one obtains
  \begin{align*}
    \chi(C^{[2]})&= \frac{\chi(C)^2-\chi(C)}{2} + \chi(C)+k,
  %  \\ &= \frac{(2-2g+k+2l)^2-}{2} + \chi(C)+k+l
  \end{align*}
  which gives the result after simplification. The case of a cuspidal
  curve is analogous starting from $\chi(C)=2-2g+2$. 
\end{proof}

The above discussion has an analogous real version.
When  $X$ is real, the Hilbert scheme $X^{[n]}$ has a tautological
real structure
inherited from the product real structure on $X^n$.
In particular, real points of $X^{[2]}$ are of three types:
\begin{enumerate}
\item unordered pairs of points in $\RR X$;
\item unordered pairs of complex conjugated points in
  $X\setminus \RR X$;
\item
  points in $\RR \tau^{-1}(\Delta)$.
\end{enumerate}
If $X$ is non-singular, then
$\RR \tau^{-1}(\Delta)=\mathbb P(T(\RR X))=\RR\mathbb P(TX)$, which corresponds to the choice of a real point with a real tangent direction.
\begin{example}
  If $S$ is a non-singular real algebraic projective surface, integrating with respect to the Euler characteristic using the above stratification, one
  has
  \[
  \chi(\RR S^{[2]})=\frac{\chi(S) +\chi(\RR S)(\chi(\RR S)-2)}{2}.
  \]
\end{example}

Suppose now that $C$ is a real algebraic curve with 
nodes and cusps as its only singularities, which
is contained in a non-singular real algebraic surface $S$.
the blow-up $\widetilde S$ of $S$ at
all singular points of $C$ is still a real surface, and
 $\RR\tau^{-1}(\Delta)\subset C^{[2]}$
 is homeomorphic to
 $\RR \mathrm{bl}^{-1}(C)$.
 Next lemma is  a straightforward adaptation of Lemma \ref{lem:cx ec nc curve}.

\begin{lem}\label{lem:r ec nc curve}
Let $C$ be a compact real algebraic nodal curve of arithmetic genus $g$,
with $k_+$ hyperbolic nodes, $k_-$
elliptic nodes, and $k_\CC$ pairs of complex conjugated nodes.
Then one has
\[
\chi(\RR C^{[2]})=1-g +k_\CC -k_+ + {{k_--k_+}\choose{2}}.
\]
Given $C$  a compact real algebraic curve of arithmetic genus $g$,
with a  cusp as its only singularity, one
 has
\[
\chi(\RR C^{[2]})=1-g.
\]
\end{lem}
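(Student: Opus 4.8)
The plan is to mimic exactly the computation carried out in Lemma \ref{lem:cx ec nc curve}, but keeping track of the real structure throughout, using integration with respect to the Euler characteristic $\chi_c$ for real algebraic (here semialgebraic) varieties as in Proposition \ref{prop:int r}. Recall from the discussion preceding the lemma that, for a real algebraic curve $C$ with only nodes and cusps contained in a non-singular real surface $S$, the Hilbert square $C^{[2]}$ carries a tautological real structure whose real locus decomposes, via the Hilbert--Chow morphism $\tau:C^{[2]}\to C^{(2)}$, into: unordered pairs of distinct points of $\RR C$; unordered pairs of complex conjugated points of $C\setminus \RR C$; and the fiber $\RR\tau^{-1}(\Delta)$, which is homeomorphic to $\RR\,\mathrm{bl}^{-1}(C)$ where $\mathrm{bl}:\widetilde S\to S$ blows up the singular points of $C$.

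First I would treat the nodal case. Write $\Delta_{\RR C}\subset (\RR C)^2$ for the diagonal and observe that the first stratum, $\big((\RR C)^2\setminus \Delta_{\RR C}\big)/\mathfrak S_2$, has $\chi_c$ equal to $\tfrac12\big(\chi_c(\RR C)^2-\chi_c(\RR C)\big)$ by additivity and multiplicativity (the free $\mathfrak S_2$-action lets us divide by $2$, as in the complex case). Here $\chi_c(\RR C)=\chi(\RR C)$; since $C$ has arithmetic genus $g$ and $k_++k_-+2k_\CC$ nodes, one computes $\chi(\RR C)$ from the normalization $\widetilde C$: a hyperbolic node identifies two real points, contributing $-1$ each; an elliptic node is an isolated real point of $\RR C$ not in the normalization's real locus, contributing $+1$; a complex conjugate pair of nodes contributes $0$ to $\chi(\RR C)$. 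So $\chi(\RR C)=\chi(\RR \widetilde C)-k_++k_-$. The second stratum, pairs of complex conjugated points, is the quotient of $(C\setminus\RR C)$ by complex conjugation minus the contribution of conjugate pairs of singular points; its $\chi_c$ is $\tfrac12\big(\chi_c(C)-\chi_c(\RR C)\big)+(\text{correction at the }k_\CC\text{ conjugate node pairs})$ — this is the only place where the complex (non-real) singularities enter. For the third stratum I would use the homeomorphism $\RR\tau^{-1}(\Delta)\cong\RR\,\mathrm{bl}^{-1}(C)$: over a smooth real point the fiber is $\RP^1$ ($\chi_c=0$), over a hyperbolic node we get two real lines, over an elliptic node we get a conjugate pair of lines, and over a complex node pair we get a conjugate pair of exceptional lines, so only finitely many points contribute and the count is elementary. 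Adding the three contributions and simplifying should collapse to $1-g+k_\CC-k_++\binom{k_--k_+}{2}$; I expect all the $\widetilde C$-dependent terms to cancel against the $\RP^1$ fibers exactly as in the proof of Klein-type formulas, leaving only the combinatorial terms in the $k$'s.

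For the cuspidal case the argument is the same but simpler: a single real cusp is a single real point of $\RR C$ lying in the real locus of the normalization as well (it is a real branch), so $\chi(\RR C)=\chi(\RR\widetilde C)$, there are no complex singular points, and over the cusp the fiber $\mathrm{bl}^{-1}(C)$ picks up the tangent line to the cuspidal branch, which is real, contributing a bounded correction. The same bookkeeping then yields $\chi(\RR C^{[2]})=1-g$.

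The main obstacle, and the step I would be most careful about, is pinning down the local contribution of $\RR\tau^{-1}(\Delta)$ at the singular points of $C$ — in particular getting the signs right for elliptic versus hyperbolic nodes and for conjugate pairs of nodes — since this is precisely where the real refinement departs from Lemma \ref{lem:cx ec nc curve}, and it is what produces the asymmetric term $-k_+$ (rather than, say, $-k_-$) and the binomial $\binom{k_--k_+}{2}$. A clean way to handle this is to note that, after blowing up, $\RR\,\mathrm{bl}^{-1}(C)$ is a disjoint union of an $\RP^1$-bundle-like piece over the smooth locus (contributing $0$) together with the exceptional $\RP^1$'s, of which exactly $2k_+$ (two per hyperbolic node, both containing a real branch point) contribute $\chi_c=0$ individually but interact with the diagonal stratification, while the elliptic and complex ones are permuted freely by conjugation in pairs; carefully accounting for which exceptional curves are fixed by conjugation and which are swapped gives the stated formula. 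I would double-check the final simplification against the complex formula of Lemma \ref{lem:cx ec nc curve} by the sanity check that summing the real formula with its "imaginary part" should be compatible with $\chi_c(\RR C^{[2]})\equiv \chi(C^{[2]})\bmod 2$ in the appropriate sense.
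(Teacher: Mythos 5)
Your overall strategy --- stratify $\RR C^{[2]}$ into pairs of distinct real points, conjugate pairs of non-real points, and $\RR\tau^{-1}(\Delta)$, then integrate with respect to $\chi_c$ --- is exactly the ``straightforward adaptation'' of Lemma \ref{lem:cx ec nc curve} the paper intends, and your computation $\chi(\RR C)=k_--k_+$ is correct. However, your treatment of the third stratum, which you yourself single out as the delicate step, is wrong in a way that changes the answer. The fiber of $\tau^{-1}(\Delta)\to C$ over a \emph{smooth} point $p$ of $C$ is a single point (the unique tangent direction in $\PP(T_pC)$), not an $\RP^1$; the $\CP^1$-fibers occur only over the singular points, where the Zariski tangent space is $2$-dimensional. (Sanity check against the complex Lemma \ref{lem:cx ec nc curve}: there $\tau^{-1}(\Delta)$ contributes $\chi(C)+k$, namely $\chi(C)-k$ from point-fibers over the smooth locus plus $2k$ from $k$ exceptional $\CP^1$'s --- one per node, not two.) Consequently the smooth real locus contributes $\chi_c\bigl(\RR C\setminus\mathrm{Sing}\bigr)=(k_--k_+)-(k_++k_-)=-2k_+$ to the third stratum, while each real node (hyperbolic \emph{or} elliptic) carries a single real exceptional curve with $\RR E_p\cong\RP^1$, hence $\chi_c=0$; the exceptional curve over an elliptic node is real too, only its two branch points are conjugate. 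So the third stratum gives $-2k_+$, and this is precisely the source of the asymmetric $-k_+$ once combined with the $+k_+$ coming from the second stratum. With your fibers ($\RP^1$ over smooth points, ``two real lines'' per hyperbolic node) the third stratum would come out to $0$ and you would land on $1-g+k_\CC+k_++\binom{k_--k_+}{2}$, with the wrong sign on $k_+$.

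A second, smaller issue: the second stratum needs no ``correction at the $k_\CC$ conjugate node pairs''. A conjugate pair of nodes $\{p,\bar p\}$ is an ordinary point of the stratum of conjugate pairs of distinct points, so that stratum is exactly $(C\setminus\RR C)/\mathrm{conj}$ with $\chi_c=\tfrac12\bigl(\chi(C)-\chi(\RR C)\bigr)=1-g+k_++k_\CC$; the dependence on $k_\CC$ enters only through $\chi(C)=2-2g+k_++k_-+2k_\CC$. Adding $\binom{k_--k_+}{2}+(1-g+k_++k_\CC)+(-2k_+)$ gives the stated formula, and the same bookkeeping (point-fibers over the smooth locus, one exceptional $\RP^1$ meeting the real strict transform in a single real point over a real cusp) gives $0+(2-g)+(-1)=1-g$ in the cuspidal case.
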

\begin{example}\label{exa:ec hs}
  The values of $\chi(\RR C^{[2]})$ listed in Table \ref{tab:ec hs}
  will be of particular interest.
  \begin{table}[!h]
\begin{center}
\begin{tabular}{ c||c| c|c|c|c|c|c}
  $(k_+,k_-,k_\CC)$  & $(0,0,0)$ & $(1,0,0)$
  & $(0,1,0)$& $(2,0,0)$ & $(0,2,0)$  & $(1,1,0)$ & $(0,0,1)$ 
\\\hline
%&&&&&&\\
$\chi(\RR C^{[2]})-(1-g)$& $0$ & $0$ & $0$ & $1$& $1$ &$-1$  & $1$ 
\end{tabular}

\end{center}
\caption{Some values of  $\chi(\RR C^{[2]})$ for nodal curves}\label{tab:ec hs}
\end{table}
\end{example}

\subsection{Proof of Theorem \ref{thm:signed}}
\label{Sect:ProofFirstMainTheorem}
We use  notations of Section \ref{sec:double}. Let
 $C$ be a generic real algebraic curve of
 degree $d=2k$ in $\CC P^2$, and  let $\pi:S\to\CP^2$ be the  double
 covering of $\CP^2$ ramified along $C$  equipped with any of the two
   lifts of the complex conjugation on $\CP^2$.
Let $n_+$ and $n_-$ be the number of binodal curves in $\RR V$ with
an even and odd number of elliptic nodes, respectively.
Notice that the change of real structure over $S$ switches elliptic
and hyperbolic nodes, so that these two numbers only depend on $C$. 
%\st{ Let $n_+$ (resp. $n_-$) be the number of  curves in $\RR V$ with two elliptic, two hyperbolic or two complex conjugate nodes (resp. one elliptic and one hyperbolic node). Notice that the change of real structure over $S$ switches elliptic and hyperbolic nodes, so that these numbers are well-defined. Alternatively, we consider the curves with two nodes, of which there are a finite number, and $n_\pm$ is the number of curves with an even/odd number of elliptic nodes.}
The left hand-side in the next theorem may be interpreted as the number of
binodal real curves in $V$, counted with Welschinger signs
(see \cite{Wel1}).
	\begin{theo}\label{thm:signed2}
          One has
	  \[
          n_+ - n_- =-2\chi(\RP^2_+)\chi(\RP^2_-) + \frac{ d(d-2)}{2}.
          \]
	\end{theo}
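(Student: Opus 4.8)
The strategy is to compute the topological Euler characteristic $\chi(\RR\CCC^{[2]})$ in two different ways, using the two projections $\nu_1:\CCC^{[2]}\to V$ and $\nu_2:\CCC^{[2]}\to S^{[2]}$, and then equate the two expressions. The key technical input is Proposition \ref{prop:int r} (integration with respect to $\chi_c$ in the real semialgebraic setting) together with the fact that $\chi_c=\chi$ on compact spaces.

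First I would compute $\chi(\RR\CCC^{[2]})$ via $\nu_2$. The fiber of $\nu_2$ over a point $\xi\in\RR S^{[2]}$ is the set of curves $D\in\RR V$ passing through $\xi$, which is a projective subspace of $\RR V\cong\RP^2$: generically a $\RP^0$ (a single line through two prescribed points), but jumping to $\RP^1$ when the two points of $\xi$ lie on a common fiber of $\pi$, i.e.\ when $\tau(\xi)$ projects to a single point of $\CP^2$ or $\xi$ lies in a fiber-type subscheme. So I would stratify $\RR S^{[2]}$ according to the dimension of this fiber, apply Proposition \ref{prop:int r}, and use the value $\chi(\RR S^{[2]})=\frac{\chi(S)+\chi(\RR S)(\chi(\RR S)-2)}{2}$ from the Example in Section \ref{sec:hilbsch} for $S_+$ (noting $\chi(\RR S_\pm)=2\chi(\RP^2_\pm)$ and $\chi(\RR S_+)+\chi(\RR S_-)=\chi(\RP^2)=1$, with $\chi(S)=6+d(d-3)$). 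The outcome should be a formula of the shape $\chi(\RR\CCC^{[2]})=\chi(\RR S^{[2]})+(\text{correction from the }\RP^1\text{-fibers})$, and the correction term is where the quantity $-2\chi(\RP^2_+)\chi(\RP^2_-)$ and the $\frac{d(d-2)}{2}$ enter.

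Next I would compute $\chi(\RR\CCC^{[2]})$ via $\nu_1$. The fiber of $\nu_1$ over $D\in\RR V$ is $\RR(D^{[2]})$. Stratifying $\RR V$ by the strata $\RR U_0,\RR U_1,\RR U_2,\RR U_{11}^+,\RR U_{11}^-$ of Section \ref{sec:double}, and invoking Lemma \ref{lem:r ec nc curve} (and the special values in Table \ref{tab:ec hs}) for $\chi(\RR D^{[2]})$ on each stratum, Proposition \ref{prop:int r} gives $\chi(\RR\CCC^{[2]})$ as a sum over these strata. Since $\chi(\RR D^{[2]})$ equals $1-g$ plus a correction that is $+1$ on $\RR U_{11}^-$ (hyperbolic binodal curves), $-1$ on $\RR U_{11}^+$ (elliptic binodal, i.e.\ $t_0$-bitangents), and $0$ on $\RR U_1$ and $\RR U_2$, the $\chi_c$ of each stratum being finite for the $0$-dimensional strata and the correction being locally constant, this expression collapses to $\chi(\text{main term})+\#\{\text{hyperbolic binodal}\}-\#\{\text{elliptic binodal}\}$. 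The arithmetic genus $g$ of $D$ is constant on $\RR V$ (equal to the arithmetic genus of $\pi^{-1}(L)$), so the main term is just $(1-g)\chi(\RR V)=(1-g)\chi(\RP^2)=1-g$ plus the contribution of $\tau^{-1}(\Delta)$-strata; in any case it is a quantity depending only on $d$. The two binodal counts are exactly $n_-$ (hyperbolic $=$ odd number of elliptic nodes $=$ number of $t_2$-curves, wait --- here $k_\CC=0$, so "odd number of elliptic nodes" among two nodes means exactly one, giving $1$ real pair, contributing $-1$; and "even number" means $0$ or $2$ elliptic nodes) $=n_+-n_-$ up to sign bookkeeping that I would carry out carefully using Table \ref{tab:ec hs}: for $(k_+,k_-,k_\CC)=(2,0,0)$ and $(0,2,0)$ the correction is $+1$, for $(1,1,0)$ it is $-1$, and for $(0,0,1)$ (complex conjugate pair of nodes) it is $+1$. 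Thus $\nu_1$ yields $\chi(\RR\CCC^{[2]}) = (\text{term in }d) + n_+ - n_-$.

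Finally, equating the two computations isolates $n_+-n_-$ as a difference of explicit functions of $d$ (via $\chi(S)$ and $\chi(\RR S_+)$), and a short simplification — using $\chi(\RP^2_+)+\chi(\RP^2_-)=1$ to rewrite $\chi(\RP^2_+)\chi(\RP^2_-)$ — should produce exactly $n_+-n_- = -2\chi(\RP^2_+)\chi(\RP^2_-)+\frac{d(d-2)}{2}$. \textbf{The main obstacle} I anticipate is the $\nu_2$-side bookkeeping: correctly identifying the locus in $\RR S^{[2]}$ over which $\nu_1^{-1}$, sorry, $\nu_2^{-1}$ jumps from $\RP^0$ to $\RP^1$ (the "fiber-type" length-2 subschemes of $S$, i.e.\ those contained in a single fiber $\pi^{-1}(x)$), describing it as a semialgebraic set, and computing its $\chi_c$ — this set is essentially a copy of $\RR C$'s data together with the ramification behavior of $\pi$, and getting its Euler characteristic right (and matching the real structure $S_+$ versus $S_-$, which is what makes $n_\pm$ well-defined) is the delicate point. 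A secondary subtlety is checking that all the maps involved are genuinely locally trivial fibrations over the chosen strata, so that Proposition \ref{prop:int r} applies; this uses genericity of $C$ (so that $C^\vee$ has only nodes and cusps).
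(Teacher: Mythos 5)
Your proposal is correct and follows essentially the same route as the paper: compute $\chi(\RR\CCC^{[2]})$ once via $\nu_1$ using Lemma \ref{lem:r ec nc curve} and Table \ref{tab:ec hs} (yielding $(2-k)+n_+-n_-$ after the sign bookkeeping you describe, which you resolve correctly), and once via $\nu_2$ by stratifying $\RR S_+^{[2]}$ according to whether the fiber of $\nu_2$ is a point or a pencil. The "main obstacle" you flag is handled in the paper exactly as you anticipate, by splitting $\RR S_+^{[2]}$ into pairs of real points, pairs of conjugate points, and points with a real tangent direction, with the jump locus governed by whether $\pi$ identifies the two points (equivalently whether the image lies in $\RP^2_+$ or $\RP^2_-$), the last stratum contributing zero.
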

        \begin{proof}
 The proof goes by computing in two ways the Euler characteristic of
 $\RR\CCC^{[2]}$, using  Proposition \ref{prop:int r} and the two
 maps $\nu_1$ and $\nu_2$.
 To that end we first have to choose a real structure on $S$, that we
 assume to be $S_+$.
 We start with the map $\nu_1:\RR \CCC^{[2]}\to\RR V$.
 Since all curves in $V$ have the same arithmetic genus $1-\frac{4-d}{2}=k-1$, one has
by Table \ref{tab:ec hs} and Lemma \ref{lem:r ec nc curve},
	\begin{align*}
	  \chi(\RR\CCC^{[2]}) = & (2-k)\chi(\RR V) +n_+-n_-
         \\ =&  (2-k) +n_+-n_-.\\
	\end{align*}

        Next we use the map $\nu_2:\RR \CCC^{[2]}\to \RR  S_+^{[2]}$.
        Recall that   $\RR S_+^{[2]}$ is stratified along the following
        three types of real points.
	\begin{enumerate}
	\item[(1)] Pairs of distinct real points $p,q\in\RR S_+$.
          \begin{itemize}
            \item   If
          $\pi(p)\ne \pi(q)$, then
          there exists a unique real curve in $V$ passing through
          $\{p,q\}$: the lift of the line passing through $\{\pi(p),\pi(q)\}$.
          \item   If
          $\pi(p)=\pi(q)$, then
          there exists a pencil of real curves in $V$ passing through
          $\{p,q\}$.
          \end{itemize}
          Altogether, since $\chi(\RP^1)=0$,
          this stratum of $\RR S_+^{[2]}$  contributes for
	\[
        \frac{\chi(\RR S_+)^2-\chi(\RR S_+)}{2}-\chi(\RR P^2_+)=-2\chi(\RP^2_+)\chi(\RP^2_-)
        \]
        to $\chi(\RR S_+^{[2]})$.
	\item[(2)] Pairs of complex conjugate points
          $(p,\overline{p})$ in $S_+\setminus\RR S_+$.
          In this case
          \[
          \pi(p)=\pi(\overline p)\Longleftrightarrow
          \pi(p)\in\RP^2 \Longleftrightarrow  \pi(p)\in\RP^2_-. 
          \]
          \begin{itemize}
            \item   If
          $\pi(p)\notin \RP^2_-$, 
          there exists a unique real curve in $V$ passing through
          $\{p,\overline p\}$.
          \item   If
          $\pi(p)\in \RP^2_-$, then
          there exists a pencil of real curves in $V$ passing through
          $\{p,\overline p\}$.
          \end{itemize}
Altogether,
          this stratum of $\RR S_+^{[2]}$  contributes for
	\begin{align*}
	  \frac{ \chi(S_+)-\chi(\RR S_+)}{2} -\chi(\RR P^2_-) 
	 =  2+k(2k-3)
	\end{align*}
        to $\chi(\RR S_+^{[2]})$.
        
 \item[(3)] Points $p$ in $\RR S_+$  equipped with
        a real tangent line $l\in \PP(T_p\RR S_+)$. 
		\begin{itemize}
		\item If $p\notin \RR C$, then there exists
                  a unique real curve $D$ in $V$ passing through $p$ and
                  such that $T_pD=l$:
                  the lift
                  of the line passing through $\pi(p)$ with direction
                  $\mathrm{d}_p\pi(l)$.
                  Since
                  $\mathbb P\left(T\left(\RR S_+\setminus \RR C\right)\right)$
                  is a locally trivial fibration over
                  $\RR S_+\setminus \RR C$ with fiber $\RP^1$,
                  this stratum of $\RR S_+^{[2]}$ does not contribute 
                    to $\chi(\RR S_+^{[2]})$.

		  \item If $p\in \RR C$ and $\mathrm{d}_p\pi(l)=0$, then
                    $l\subset T_pD$ for
                     any curve $D$ passing through
                    $p$. Since there is a
                    pencil of such curves,   this stratum of $\RR
                    S_+^{[2]}$ does not contribute  
                    to $\chi(\RR S_+^{[2]})$.

\item If $p\in \RR C$ and $\mathrm{d}_p\pi(l)\ne 0$,
  then $l\subset T_pD$ 
  only for the lift $D$ of the tangent line of $C$ at $p$. 
  Hence this stratum of $\RR  S_+^{[2]}$  contributes for
  \[
  \chi_c(\RR) \chi(\RR C)=0
  \]
                    to $\chi(\RR S_+^{[2]})$, since $\RR C$ is a
                    disjoint union
                    of circles.
		\end{itemize}
	\end{enumerate}
	In the end, we obtain
	\begin{align*}
	\chi(\RR\CCC^{[2]}) = &-2\chi(\RP^2_+)\chi(\RP^2_-)+ 2+k(2k-3).
	\end{align*}	
        Equating these two computations of $\chi(\RR\CCC^{[2]})$ gives
        \[
        (2-k) +n_+-n_-=-2\chi(\RP^2_+)\chi(\RP^2_-)+ 2+k(2k-3),
        \]
        	which is the desired identity.
        \end{proof}

\medskip
\begin{proof}[Proof of Theorem \ref{thm:signed}]
  To deduce Theorem \ref{thm:signed} from Theorem \ref{thm:signed2},
  it remains to notice the two following trivial
  facts, whatever the chosen real structure on
  $S$ is:
  \begin{enumerate}
  \item[(1)]   a real binodal curve in $\RR V$ has an odd number of elliptic nodes
  if and only if it is the lift of a real bitangent $L$ of $C$ with
  $\epsilon(L)=-1$;
\item[(2)] one has
  \[
  \chi(\RP^2_+)= p-n
  \qquad\mbox{and}\qquad
    \chi(\RP^2_-)=n-p+1.
  \]
  \end{enumerate}
\end{proof}

\subsection{Plücker and Klein Formulas for plane curves of even degree}
Since it may be of interest,
we briefly indicate how our strategy  to prove Theorem \ref{thm:signed} also allows to 
 recover Plücker and Klein Formulas
for plane curves of even degree.

\medskip
\subsubsection{Number of inflection points.}
Using the map $\nu_1:\CCC\to V$, one obtains
\[
\chi(\CCC)=(4-d)\chi(\CP^2) + \chi(U_1) + 2t_\CC(d)+2i_\CC(d).
\]
Recall that  $U_1\cup U_2\cup U_{11}$ is the dual curve $C^\vee$ of
$C$. Since $C$ is the normalization of $C^\vee$, and  $U_{11}$ and
$U_2$ are the nodes and the cusps of $C^\vee$, respectively, one has
\[
\chi(U_1) + 2t_\CC(d)+i_\CC(d)=\chi(C)=-d(d-3).
\]
Since a pencil of lines passes through any point in $\CP^2$,
considering the map $\nu_2:\CCC\to S$ gives
\[
\chi(\CCC)=\chi(\CP^1)\chi(S)=12+2d(d-3).
\]
Combining these three identities, one obtains
\[
i_\CC(d)=3(d-4) +d(d-3)+2d(d-3)+12 =3d^2-6d=3d(d-2).
\]

\medskip
\subsubsection{Number of bitangents.}
Using the map $\nu_1:\CCC^{[2]}\to V$ and Lemma \ref{lem:cx ec nc curve}, one obtains
\begin{align*}
  \chi(\CCC^{[2]})=&
  \frac{(d-4)(d-5)}{2}\chi(\CP^2) + (6-d)\chi(U_1) +
  (13-2d)t_\CC(d)+(12-2d)i_\CC(d)
  \\ =& 3\frac{(d-4)(d-5)}{2} +  d(d-3)(d-6) + 3d(d-2)(6-d)
   + t_\CC(d)
  \\ =& 30- \frac12 d(d-3)(4d-21)
   + t_\CC(d).
\end{align*}
Next we use the map
$\nu_2:\CCC^{[2]}\to S^{[2]}$. Analogously to the proof of Theorem
        \ref{thm:signed}, we use the following stratification of
         $S^{[2]}$ into
        two types of subschemes.
	\begin{enumerate}
	\item[(1)] Pairs of distinct real points $p,q\in S$.
          Then  there exists a unique curve in $V$ passing through
          $\{p,q\}$ if       $\pi(p)\ne \pi(q)$, and
          a pencil of such curves otherwise.
          Hence
          this stratum of $S^{[2]}$  contributes for
	\[
        \frac{\chi(S)^2-\chi(S)}{2}+\chi(\CP^2)-\chi(C)=18+\frac12 d(d-3)(d(d-3)+13)
        \]
        to $\chi(S^{[2]})$.
	\item[(2)]  Point $p$ in $S$  equipped with
        a  tangent line $l\in \PP(T_pS)$. 
		\begin{itemize}
		\item If $p\notin C$, then there exists
                  a unique curve $D$ in $V$ passing through $p$ and
                  such that $T_pD=l$.
                  Since
                  $\mathbb P\left(T\left(S\setminus C\right)\right)$
                  is a locally trivial fibration over
                  $S\setminus C$ with fiber $\CP^1$,
                  this stratum of $S^{[2]}$  contributes 
                  for
	\[
        \chi(\CP^1)(\chi(S)-\chi(C))=12+4d(d-3)
        \]
        to $\chi(S^{[2]})$.

		  \item If $p\in C$ and $\mathrm{d}_p\pi(l)=0$, then
                    $l\subset T_pD$ for
                     any curve $D$ in the pencil passing through
                     $p$. This stratum of
                     $S^{[2]}$ then contributes   
                             for
	\[
        \chi(\CP^1)\chi(C)=-2d(d-3)
        \]
         to $\chi(S^{[2]})$.

\item If $p\in C$ and $\mathrm{d}_p\pi(l)\ne 0$,
  then $l\subset T_pD$ 
  only for the lift $D$ of the tangent line of $C$ at $p$. 
  Hence this stratum of $S^{[2]}$  contributes for
  \[
  \chi_c(\CC) \chi(C)=-d(d-3)
  \]
                    to $\chi(S^{[2]})$.
		\end{itemize}
	\end{enumerate}
	In the end, we obtain
	\begin{align*}
	\chi(\CCC^{[2]}) = &30 +\frac12 d(d-3)(d(d-3)+15).
	\end{align*}	
        Equating these two computations of $\chi(\CCC^{[2]})$, we
        obtain
        \[
 t_\CC(d)=\frac 12 d(d-2)(d-3)(d+3).
 \]

        \medskip
        \subsubsection{Proof of Klein's formula.}
       
        The following is a reformulation in our setting
        of the proof by Viro \cite{Viro-Euler}  of
        Klein Formula in any degree.
 As $\RR\CCC$ is of dimension
 $3$, its Euler characteristic is $0$.  
We consider instead
$A=\nu_1^{-1}(\RR V)\subset \CCC$.
 Note first that
 \[
 0=\chi(\RR C)=\chi(\RR U_1)+ 2t_2 +i.
 \]
Using the map $\nu_1:A\to \RR V$, one obtains
 	\begin{align*}
	\chi(A) = & (4-d)\chi(\RR V) + \chi(\RR U_1)+2t_2+2i+2t_0 \\
	= & (4-d)+i+2t_0.
	\end{align*}

Next we compute $\chi(A)$ using the map $\nu_2:A\to S$.
	\begin{itemize}
	\item If $\pi(p)\notin\RR P^2$, the only real curve in $V$
          passing through $p$ is the lift of the real line passing through $\pi(p)$ and $\overline{\pi(p)}$.
	\item If $\pi(p)\in\RR P^2$, then the lift of any real line
          from the pencil 
          passing through $\pi(p)$ yields a real curve in $V$ passing through $p$.
	\end{itemize}
        Therefore, since $\chi(\RP^1)=0$, we get that
	\begin{align*}
	\chi(A) =  \chi(S-\pi^{-1}(\RR P^2)) = \chi(S)-2\chi(\RP^2)=4+d(d-3).
	\end{align*}
        Equating these two computations of $\chi(A)$ gives
        \[
        i+2t_0=d(d-2).
        \]

\section{Bitangents of a perturbation of a multiple conic}\label{sec:conic}

In this section we explain how to locate  real bitangents of a generic perturbation of a multiple real conic in $\RP^2$.
Our study  refines the control of the topology of the real part of such perturbation,  well known to experts in Hilbert's 16th problem.
Such perturbations can be achieved by a standard application of the deformation to normal cone technique, and they admit a very concrete and constructive formulation.
Unfortunately we could not find such constructive presentation in the literature, which we believe could be of interest outside the scope of Hilbert's 16th problem.
Hence we start by recalling in Section \ref{sec:oldconstr} how to perturb a real multiple conic while controlling its topology, before focusing on  real bitangents of such perturbations in Section \ref{sec:real bit}.

\subsection{Controlling the topology of a perturbation of a multiple real conic}\label{sec:oldconstr}
We first state this construction
in its  abstract version in Proposition \ref{prop:conic1},
before providing a
 constructive version  in Proposition \ref{prop:conic2}.
 
A non-singular compact algebraic curve $C$ in $\mathcal O_{\CC P^1}(n)$ is said to be a \emph{$k$-section} if
the projection
$\pi: \mathcal O_{\CC P^1}(n)\to \CP^1$ restricts to a  degree $k$
ramified covering $C\to \CP^1$.
Throughout the text, the real structure on both $\CP^1$ and $\mathcal O_{\CC P^1}(n)$
is induced by the standard complex conjugation on $\CC$. 
\begin{prop}\label{prop:conic1}
  Let $\C_0$ be a non-singular real conic in $\CP^2$ with
  $\RR\C_0\ne\emptyset$, and let $N_0$ be
  an open
  tubular neighborhood of $\RR \C_0$  in $\RP^2$.
  Let $C$ be a  real $k$-section in
  $\mathcal O_{\CC P^1}(4)$, and let 
  $\phi:\RR \mathcal O_{\CC P^1}(4)\to N_0$ be any homeomorphism.
  %of rank 1 real
  %vector bundle.
  Then  there exists a non-singular real algebraic curve $\C$ of degree $2k$
  in $\CP^2$ such that $\RR \C$ is isotopic to $\phi(\RR C)$ in $\RP^2$. 
\end{prop}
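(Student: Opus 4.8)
The plan is to realize the curve $\C$ as a small perturbation of the double conic $2\C_0$, using the deformation to the normal cone of $\C_0$ in $\CP^2$ to translate the problem from $\CP^2$ into the total space of the normal bundle $\mathcal N_{\C_0/\CP^2}$. First I would recall that $\C_0\cong\CP^1$, and that its normal bundle in $\CP^2$ is $\mathcal O_{\CP^1}(\C_0\cdot\C_0)=\mathcal O_{\CP^1}(4)$, all of this compatibly with the real structures; thus a real $k$-section $C$ in $\mathcal O_{\CP^1}(4)$ is exactly the local model for a curve meeting $\C_0$ in $4k$ points (with multiplicity) and lying in a tubular neighborhood of $\C_0$. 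The deformation to the normal cone gives a flat family $\mathcal X\to\Delta$ (over a small disk) whose central fiber is $\CP^2$ with $\C_0$ blown up — more concretely, a one-parameter family of automorphisms-like degenerations — such that a neighborhood of $\C_0$ in $\CP^2$ is identified, for small nonzero parameter, with a neighborhood of the zero section in $\mathcal O_{\CP^1}(4)$. Concretely: pick an equation $q$ for $\C_0$ and coordinates, so that near $\C_0$ the pair $(\CP^2,\C_0)$ looks like $(\mathcal O_{\CP^1}(4),\text{zero section})$; a real $k$-section $C$, given near the zero section by a real equation, pulls back under this identification to a real polynomial of the form $q^k + (\text{lower order in } q)$, which extends to a global real homogeneous polynomial of degree $2k$ on $\CP^2$ defining a curve $\C_t$.

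The key steps, in order, are: (1) set up the identification of a tubular neighborhood of $\RR\C_0$ with $\RR\mathcal O_{\CP^1}(4)$ via a real trivialization of the normal bundle together with an exponential/tubular-neighborhood map, so that $\phi$ can be taken to be this standard identification after an isotopy; (2) write down, for a chosen defining section of $C\subset\mathcal O_{\CP^1}(4)$, an explicit family of real homogeneous degree-$2k$ polynomials $P_t$ on $\CP^2$ with $P_0 = q^k$ and with $P_t$, for $0<|t|$ small, whose zero locus near $\C_0$ is a reparametrization of (a scaled copy of) $C$; (3) invoke a standard transversality/Ehresmann argument: for $t$ small, $\{P_t=0\}$ is non-singular (since $C$ is non-singular and the family is transverse to the discriminant away from $t=0$, using genericity of $C$), it is contained in the tubular neighborhood $N_0$, and its real part is diffeomorphic to $\RR C$; (4) conclude via Thom's isotopy lemma / the structure of the discriminant that $\RR\C_t$ is isotopic in $\RP^2$ to $\phi(\RR C)$, since both are contained in $N_0$ and realize the same isotopy type of the pair $(\text{annulus or Möbius band}, \text{curve})$, the ambient isotopy being extended by the identity outside $N_0$ (here one uses that $\RP^2\setminus N_0$ is a disk, onto which the curve does not enter). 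The fact that $\RR\C_0\neq\emptyset$ and that $\RR\mathcal O_{\CP^1}(4)$ is the model ensures $N_0$ is a genuine (possibly non-orientable, when $\RR\C_0$ is a single pseudo-line — but a conic's real part is an oval, so $N_0$ is an annulus) neighborhood matching the model.

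The main obstacle I expect is step (3)–(4): controlling that the perturbation stays inside $N_0$ and genuinely realizes the prescribed isotopy type, rather than just \emph{some} perturbation of the double conic. This requires care in choosing the scaling of the perturbing term (it must be small enough to remain in $N_0$, yet the family must cross the discriminant transversally so that the non-singular fiber $\C_t$ inherits exactly the topology of $C$ and not of some partial smoothing), and it requires the genericity hypothesis on $C$ to guarantee that the corresponding point of the parameter space of degree-$2k$ curves is a non-singular point of (the complement of) the discriminant, so that Thom–Ehresmann applies and the real isotopy type is locally constant. A secondary technical point is making the tubular-neighborhood identification in step (1) real-algebraic enough (or at least smooth and compatible with the real structures) that the pullback of a real-algebraic $k$-section becomes, after a controlled correction, the restriction of a real-algebraic degree-$2k$ curve; this is where the deformation-to-the-normal-cone formalism does the bookkeeping cleanly, and where the constructive version promised in Proposition \ref{prop:conic2} will make everything explicit.
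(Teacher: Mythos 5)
Your plan is essentially the paper's own proof: both realize $\C$ by deforming to the normal cone of $\C_0$ (so that the problem moves into $\mathcal O_{\CC P^1}(4)\cong \mathcal N_{\C_0/\CP^2}$), extending the defining section of $C$ --- which avoids the singular locus of the central fiber because a $k$-section is disjoint from the infinity section --- to a section of a suitable line bundle on the total space, and taking a nearby fiber; your explicit family $P_t$ with $P_0=q^k$ is exactly the constructive version the paper defers to Proposition~\ref{prop:conic2}. The one point you gloss over is that an arbitrary $\phi$ may differ from the standard identification by the fiberwise sign flip, so $\phi(\RR C)$ lies in one of two possible isotopy classes; these are realized by the two signs of the deformation parameter $t$, and the sign must be chosen to match the given $\phi$.
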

\begin{remark}
Since $\RP^2\setminus \RR \C_0$ is the disjoint union of a disk an a
Möbius band, there are at most two isotopy classes  for the image
$\phi(\RR C)$ in $\RP^2$. Given $\phi(\RR C)$ one such image, the
potential second isotopy class is obtained  by composing $\phi$ on the left by
the vector bundle automorphism
$s:\mathcal O_{\CC P^1}(4)\to \mathcal O_{\CC P^1}(4)$ 
that restricts to $-\mathrm{Id}_F$  on each fiber $F$.
\end{remark}
\begin{proof}
  As already mentioned, this is a standard application of the deformation to normal cone technique.
 Let $\X$ be the blow-up of $\CP^2\times\CC$ along
 $\C_0\times\{0\}$. The projection
 $\CP^2\times\CC\to \CC$ extends to a projection
 $\pi:\X\to\CC$, and we denote by $X_t$ the fiber over $t\in\CC$. By
 construction $X_t=\CP^2$ for $t\ne 0$. Since $\C_0$ is rational and has
 self-intersection 4 in $\CP^2$, the surface $X_0$ is the union of
 $X_{0,1}=\CP^2$ and $X_{0,2}=\PP(\mathcal O_{\CC P^1}(4) \oplus \mathcal O_{\CC P^1})$ with
 $X_{0,1}\cap X_{0,2}$ being  $\C_0$ in $X_{0,1}$ and the $(-4)$-section $E_4$
 in  $X_{0,2}$.
 We identify $\C_0$ with the $0$-section of  $\mathcal O_{\CC
   P^1}(4)$.

 Now let $\L$ be a line bundle on $\X$ such that
 \begin{itemize}[label=$\ast$]
 \item $\L_{|X_t}=\O_{\CP^2}(2k)$ if $t\ne 0$;
 \item $\L_{|X_{0,1}}=\O_{\CP^2}$ ;
 \item $\L_{|X_{0,2}}=\O_{X_{0,2}}(k\C_0)$.
 \end{itemize}
 and
 let $\sigma_0$ be a real section of $H^0(X_{0,2}; \O_{X_{0,2}}(k\C_0))$
 defining $C$.
 Since $E_4\cap C=\emptyset$ by assumption,
the section
 $\sigma_0$ extends to a real section
 $\sigma\in H^0(\X;\L)$. Denote by $\Y$ the  divisor of $\X$ defined by
 $\sigma$. Note that $\Y\cap X_0=\Y \cap (X_0\setminus X_{0,1})=C$.
 Since $C$ is non-singular,
 the intersection of $\Y $ and $X_{0,2}$ is transverse. 
 Since  $X_t$ is obtained by smoothing $X_0$, whose singular locus
 $X_{0,1}\cap X_{0,2}$ is disjoint from $C$, the curve
$\Y\cap X_t$, with $t\in\RR^*$ either positive or negative,
is the curve $\C$ whose existence in claimed in the proposition. 
Note that if $t_1$ and $t_2$ are of different signs,
the curves $\Y\cap X_{t_1}$ and $\Y\cap X_{t_2}$ realize the two
(potentially equal)
isotopy classes $\phi(\RR C)$ and $\phi\circ s(\RR C)$ in $\RP^2$.
\end{proof}

Interpreting a situation  as a special instance of a
general and powerful theory may provide elegant and conceptual
proofs,
this may  nevertheless not be  as useful to work out  concrete examples. 
To that purpose let us now express Proposition \ref{prop:conic1} and
its proof in
a
more down-to-earth and
constructive way.

Recall that the Newton polygon $\Delta(P)$ of a
polynomial $P(u,v)\in \CC[u,v]$ is
the convex hull in $\RR^2$ of the set of all  $(i,j)\in\ZZ^2$ such that
the $u^iv^j$-coefficient of $P$ does not vanish.
\begin{defi}\label{def:ksection}
  An algebraic curve $C$ in $\CC^2$ defined by a polynomial
$P(u,v)=\sum_{(i,j)\in\ZZ^2} a_{i,j}u^iv^j\in \CC[u,v]$
  is called a \emph{non-singular $k$-section}
  (of degree 4) if the following conditions are satisfied:
\begin{enumerate}
  \item\label{cond1} $\Delta(P)$ is the polygon $\Delta_k$
    depicted in Figure \ref{fig:deltak};
    \begin{figure}[h!]
\centering
\begin{tabular}{c}
  \includegraphics[width=4cm, angle=0]{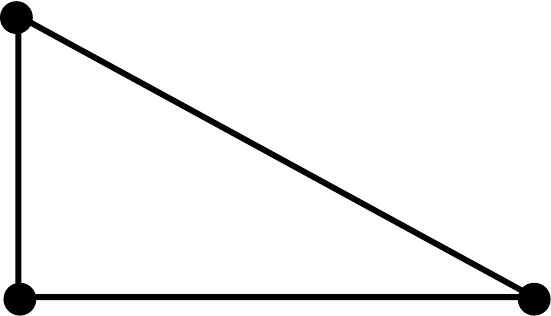}
  \put(-145,0){$(0,0)$}
  \put(5,0){$(4k,0)$}
  \put(-145,60){$(0,k)$}
\end{tabular}
\caption{The polygon $\Delta_k$.}
\label{fig:deltak}
    \end{figure}
    
\item  $C$ is non-singular in $\CC^2$;

\item\label{cond2} the polynomial
  \[
  \sum_{i+4j=4k} a_{i,j}u^iv^j
  \]
  has no multiple factors. 
\end{enumerate}
If $P(u,v)$ can be chosen real, the $k$-section $C$ is said to be real.
\end{defi}

\begin{example}\label{ex:2sec}
  The curve $C_2$ defined by the equation
  \[
  v^2-u(u-1)(u-2)(u-3)(u-4)(u-5)(u-6)(u-7)=0
  \]
  is a real  $2$-section, whose real part is depicted in
  Figure \ref{fig:ex sec}a. 
    \begin{figure}[h!]
\centering
\begin{tabular}{ccccc}
  \includegraphics[width=3cm, angle=0]{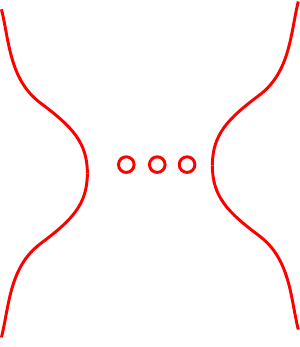}
  &\hspace{5ex}&
  \includegraphics[width=3cm, angle=0]{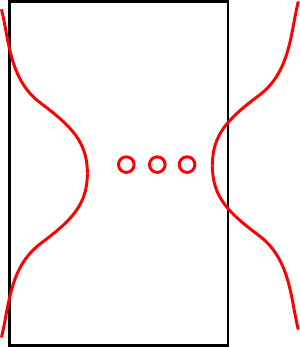}
  &\hspace{5ex}&
  \includegraphics[width=3cm, angle=0]{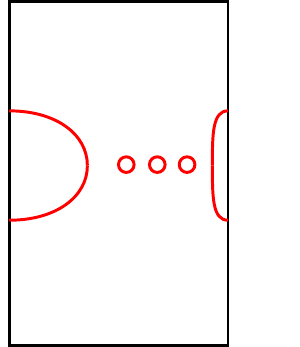}
  %% \put(-210,130){Up}
  %% \put(-220,50){Down}
  %% \put(-250,90){$R_M$}
  %% \put(5,145){$N_1$}
  %% \put(5,35){$N_2$}
  \\
  \\ a) $\RR C_2$ && b)  $R_M$ and $\RR C_2$&& c)$R_M$  and  $h(\RR C_2\cap R_M,1)$ 
\end{tabular}
\caption{The $2$-section $C_2$}
\label{fig:ex sec}
    \end{figure}
\end{example}

Let $C\subset \CC^2$ be a real $k$-section.
By condition (\ref{cond1}) in Definition \ref{def:ksection}, the curve
$C$ has $k$ different (complex) asymptotic directions with equation $v=au^4+b$.
%It follows from 
%condition (\ref{cond1}) in Definition \ref{def:ksection}  
%that
Hence for $M$ a  positive (large enough) real number, the square  $R_{M}$ with
vertices $(\pm M,\pm M^5)$ satisfies the following (see Figure \ref{fig:ex sec}b):
\begin{itemize}
\item the horizontal edges of $R_{M}$ do not intersect $\RR C$;
\item the intersection $R_{M'}\cap \RR C$ and $R_{M}\cap \RR C$
  are isotopic if $M'\ge M$. 
\end{itemize}
By condition  (\ref{cond2}) in Definition \ref{def:ksection}, we may
further assume that, denoting by $e_{v-}$ and $e_{v+}$ the left and right
vertical edges of $R_M$ respectively,
\begin{itemize}
\item there exists an isotopy $h:R_{M}\times [0;1]\to R_M$, commuting with the
  projection
  $(u,v)\mapsto u$ and mapping the square $\partial R_M$ to itself,
   such that the
  $y$-coordinates of the points
  $  h\left(e_{v-}\cap \RR C,1 \right)$ equal the $y$-coordinates of
  the points
  $h\left(e_{v+}\cap \RR C,1\right)$  (see Figure \ref{fig:ex sec}c).
\end{itemize}
Following \cite{O1}, the pair $(R_M,h(\RR C\cap R_M,1))$ considered up
to isotopy as above 
is called the \emph{$\L$-scheme} of $C$.
Identifying the points on the two vertical edges of $R_M$ having the
same $y$-coordinate, we obtain an annulus $N$. Furthermore
the image $\gamma_C$ of $h(\RR C\cap R_M,1)$ under this identification
is a disjoint union of embedded circles in $N$. 

Up to isotopy, one can embed $N$ in exactly two ways in $\RP^2$, that we
denote by $N_+$ and $N_-$: the bottom horizontal edge of $R_M$ is
mapped inside the top horizontal edge for $N_+$, while   the top
horizontal edge of $R_M$ is
mapped inside the bottom horizontal edge for $N_-$.
We also denote by $\gamma_{C,-}\subset N_-$ and $\gamma_{C,+}\subset N_+$
the image of $\gamma_C$ under these two embeddings.
Writing $\sigma(x,y)=(x,-y)$, the embeddings $N_+$ and $N_-$ are
informally obtained as follows (see Figure \ref{fig:N}):
\begin{itemize}
\item $N_+$: identify the vertical edges of $R_M$ by bending $R_M$
  down;
  \item $N_-$: identify the vertical edges of $\sigma(R_M)$ by bending $\sigma(R_M)$
  down.
\end{itemize}
    \begin{figure}[h!]
\centering
\begin{tabular}{c}
  \includegraphics[width=10cm, angle=0]{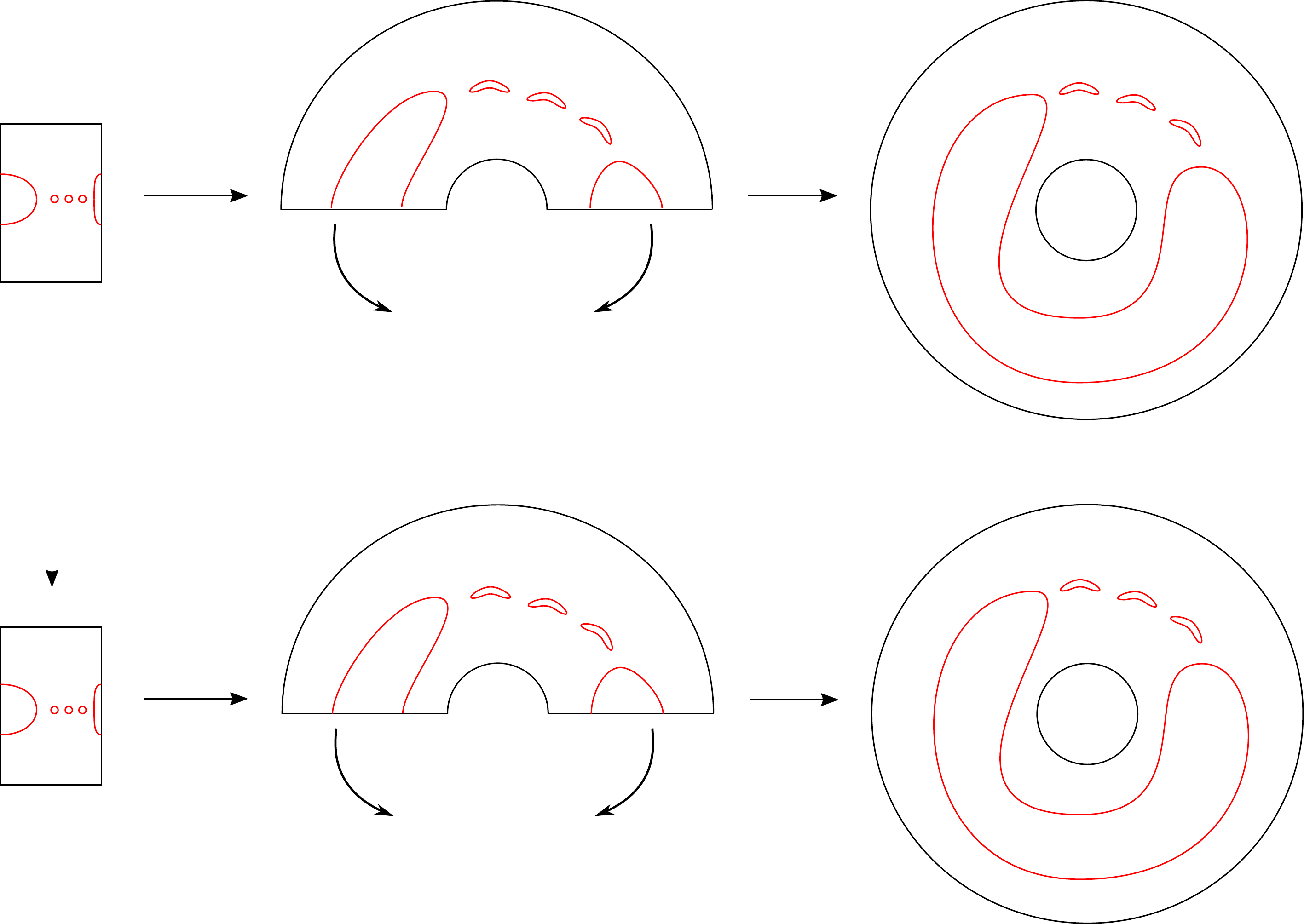}
  \put(-315,155){$R_M$}
   \put(-320,45){$\sigma(R_M)$}
   \put(-267,100){$\sigma$}
 \put(8,155){$N_+$}
  \put(8,45){$N_-$}
\end{tabular}
\caption{$N_-$ and $N_+$}
\label{fig:N}
    \end{figure}

    \begin{example}\label{ex:3sec}
      If $C_2$ is the $2$-section from Example
      \ref{ex:2sec}, the two $\L$-schemes of $C_2$ and $\sigma(C_2)$
      are isotopic, and 
      the two embeddings $\gamma_{C_2,-}$ and   $\gamma_{C_2,+}$ of
      $\gamma_{C_2}$ are isotopic in $\RP^2$, see Figure \ref{fig:N}.
      There exists a real $3$-section $C_3$  with $\L$-scheme 
       as depicted in Figure \ref{fig:3sec}a
      (we stretched $R_M$ in the horizontal direction to make the
      picture more readable).
      The two embeddings 
       $\gamma_{C_3,-}$ and   $\gamma_{C_3,+}$, depicted in Figures
      \ref{fig:3sec}b and \ref{fig:3sec}c respectively,   are not isotopic in $\RP^2$.
      Real sextics with real part isotopic to  $\gamma_{C_3,-}$
      are
     called \emph{Harnack sextics}, and
     real sextics with real part isotopic to   $\gamma_{C_3,+}$ are
     called \emph{Hilbert sextics}.
     \begin{figure}[h!]
\centering
\begin{tabular}{ccccc}
  \includegraphics[width=4cm, angle=0]{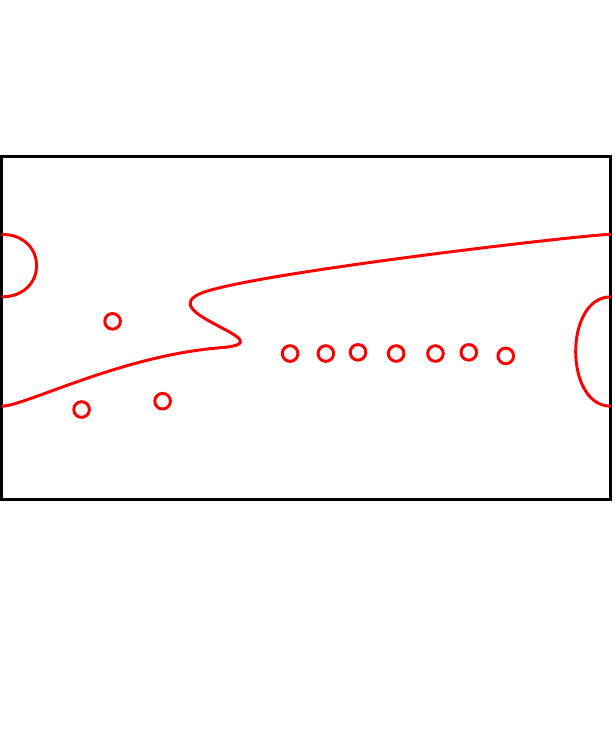}
  &\hspace{5ex}&
  \includegraphics[width=5cm, angle=0]{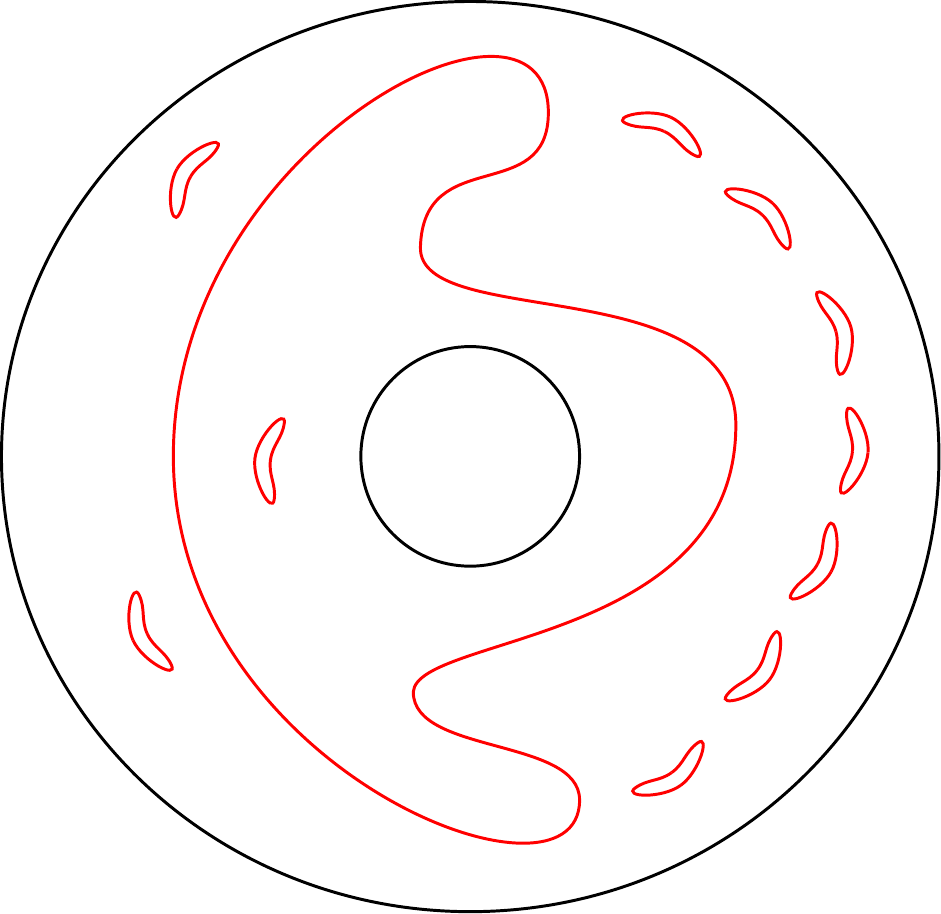}
  &\hspace{5ex}&
  \includegraphics[width=5cm, angle=0]{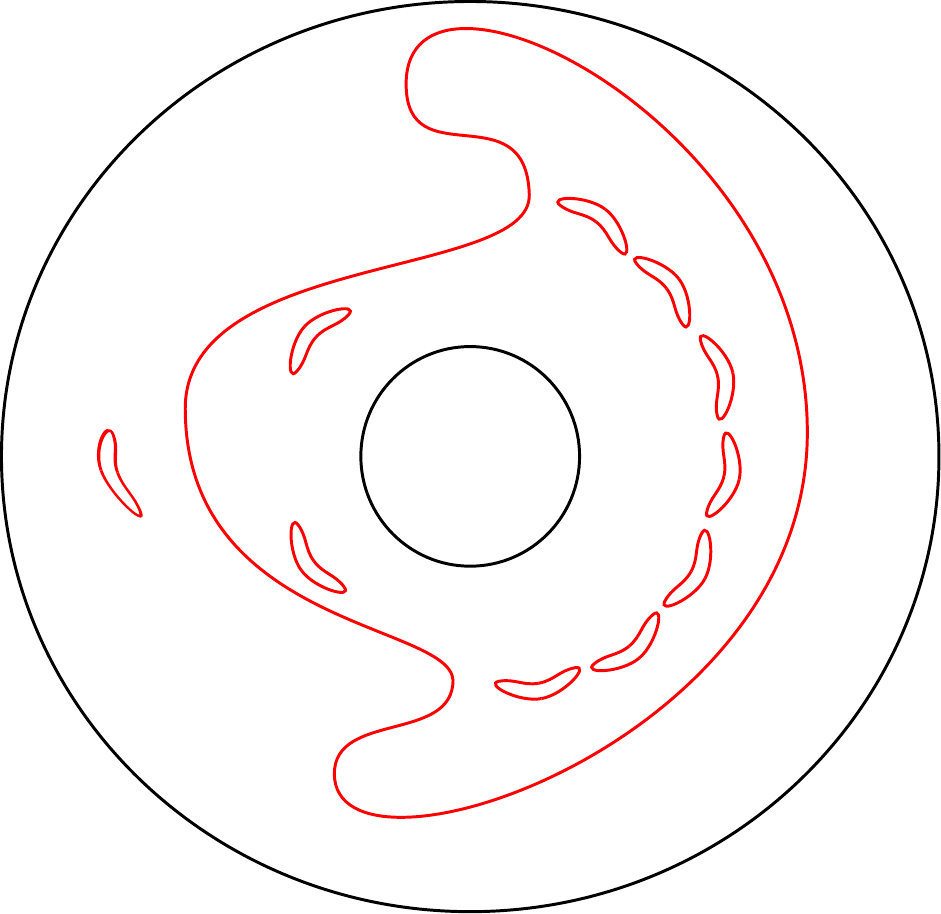}
  \\
  \\ a) $\L$-scheme of $C_3$&& b) $\gamma_{C_3,-}$  && c) $\gamma_{C_3,+}$
\end{tabular}
\caption{The $3$-section $C_3$}
\label{fig:3sec}
    \end{figure}
    
     The curve $C_3$ is a so-called \emph{simple Harnack curve} with
     Newton polygon $\Delta_3$.
     Simple Harnack curves are objects from real algebraic geometry
     that are extremal in
many aspects, see \cite{Mik11,MikRul01} for example.
     There are many ways to construct them
     explicitly, let us mention the three following ones in the case of the 
     $3$-section $C_3$:
     \begin{itemize}
     \item the Harnack method from \cite{Har} using small perturbations; %, mimicking the construction
       %of the maximal smoothing of a $J^-_{10}$-singularity from \cite{V4};
     \item combinatorial patchworking using Harnack sign distribution
       from \cite{IV2};
     \item perturbing a rational simple Harnack curve:
     by  \cite{Bru14b},  the $\L$-scheme of the rational curve $C_{3,rat}$ with parameterization
       \[
       \begin{array}{ccc}
         \CC\setminus\{15,16\} &\longrightarrow & \CC^2
         \\ \theta &\longmapsto &
         \left(\frac{\theta(\theta-1)(\theta-2)}{(\theta-15)(\theta-16)},
         \frac{-\prod_{i=3}^{14}(\theta-i)}{(\theta-15)^4(\theta-16)^4}\right)
       \end{array}
       \]
       gives the one depicted in Figure \ref{fig:3sec}a, except that
       each of the 10 ovals is
       contracted to a point. Let $P_{rat}$ be a real
       equation of $C_{3,rat}$. It follows from
\cite[Remark 8]{Br28} that one can
       choose for $C_3$ the curve with equation 
       \[
      P_{rat}(u,v) +\epsilon\left( \frac{\partial P_{rat}}{\partial u}(u,v)
       + \frac{\partial P_{rat}}{\partial v}(u,v) \right)=0,
       \]
       with $\epsilon$ a small enough real parameter.
     \end{itemize}
   \end{example}

\begin{prop}\label{prop:conic2}
  Let $C$ be a real $k$-section in $\CC^2$ defined by the real polynomial
  \[
  P(u,w)=\sum_{i+4j\le 4k}a_{i,j}u^iw^j.
  \]
  Then for  a small enough real parameter $t$, the 
  real algebraic curve $\C_t$ of degree $2k$ in $\RP^2$
  defined by the homogeneous equation
  \[
  \sum_{i_1+i_2+2j= 2k} t^{k-j} a_{2i_1,j} x^{i_1}(y^2-xz)^jz^{i_2}  +
   \sum_{i_1+i_2+2j+1= 2k} t^{k-j} a_{2i+1,j} x^{i_1}y(y^2-xz)^jz^{i_2} =0
  \]
  is non-singular and
  has a real part isotopic in $\RP^2$ to $\gamma_{C,-}$ if $t<0$, and
  to  $\gamma_{C,+}$ if $t>0$.
\end{prop}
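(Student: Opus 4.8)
The plan is to make Proposition~\ref{prop:conic2} a concrete incarnation of Proposition~\ref{prop:conic1}, by exhibiting the homogeneous polynomial in the statement as the restriction to the fibers $X_t$ of an explicit section $\sigma\in H^0(\X;\L)$ in the degeneration to the normal cone of the conic $\C_0=\{y^2-xz=0\}$. First I would fix the conic $\C_0$ to be this standard smooth conic, so that $y^2-xz$ is its defining section of $\O_{\CP^2}(2)$, and note that $\C_0$ is rational with a rational parametrization sending $[s:u]\mapsto[s^2:su:u^2]$; under this parametrization the normal bundle $\mathcal N_{\C_0/\CP^2}$ is identified with $\mathcal O_{\CP^1}(4)$, and the coordinate $u$ on $\CC^2$ in the definition of a $k$-section corresponds to the affine coordinate on $\CP^1$, while $w$ (resp.\ the fiber coordinate) corresponds to the normal direction, i.e.\ to $y^2-xz$ rescaled. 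This is exactly the dictionary in the paragraph following Theorem~\ref{thm:bit}: perturbing $k\C_0$ means working in $\mathcal N_{\C_0/\CP^2}$, and the fibers of the bundle map are the conics of the pencil $\lambda x + \mu z + \nu\,(\text{something}) $ — more precisely the restriction to $\C_0$ of the degree-one forms.

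Next I would unwind the homogeneous equation displayed in the statement. Writing a monomial $u^i w^j$ of $P$ with $i+4j\le 4k$, one sees that $i$ must be split according to parity: if $i=2i_1$ is even we substitute $u^i w^j \rightsquigarrow x^{i_1}(y^2-xz)^j z^{i_2}$ with $i_1+i_2+2j=2k$, and if $i=2i_1+1$ is odd we substitute $u^i w^j \rightsquigarrow x^{i_1}y(y^2-xz)^j z^{i_2}$ with $i_1+i_2+2j+1=2k$; the power $t^{k-j}$ is the one dictated by the line bundle $\L$, which has degree $0$ on $X_{0,1}=\CP^2$ and degree $k$ along $\C_0$ in $X_{0,2}$. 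I would check three things: (a) the total degree is $2k$ in $x,y,z$, which is immediate from the balancing conditions; (b) setting $t=1$ (or absorbing $t$) and restricting to a neighbourhood of $\C_0$ in $\CP^2$, the leading behaviour as $t\to 0$ recovers, in the affine chart with coordinates $(u,w)=(x/z,\,(y^2-xz)/z^2)$ after the standard rescaling $w\mapsto t w$, exactly the polynomial $P(u,w)$ — here the condition $i+4j\le 4k$ (the Newton polygon $\Delta_k$) is precisely what makes every monomial survive with a nonnegative power of $t$, and condition~(\ref{cond2}) guarantees transversality with the exceptional divisor $E_4$ at infinity; (c) non-singularity of $\C_t$ for small $t\ne 0$ follows from non-singularity of $C$ together with the fact that the singular locus $X_{0,1}\cap X_{0,2}$ of the central fibre is disjoint from $\Y\cap X_0=C$, exactly as in the proof of Proposition~\ref{prop:conic1}.

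Then I would deal with the sign of $t$ and the identification of the two isotopy classes $\gamma_{C,\pm}$. The real locus of $X_t$ for $t\ne0$ is $\RP^2$, and as $t$ crosses $0$ the real part of $\C_t$ degenerates onto $\RR C$ sitting inside a tubular neighbourhood $N_0$ of $\RR\C_0$; the two components of $\RP^2\setminus\RR\C_0$ (a disk and a M\"obius band) correspond to the signs of $y^2-xz$, hence to the sign of $w$ after the rescaling $w\mapsto tw$. Consequently the two signs of $t$ produce the two embeddings $N_+$ and $N_-$ of the annulus $N$ described just before the statement, and a direct check of orientations — comparing the sign of $t^{k-j}$ on monomials with $j$ of different parities, equivalently tracking on which side of $\RR\C_0$ the curve $\C_t$ lies near a given point of $\RR C$ — shows that $t<0$ gives $\gamma_{C,-}$ and $t>0$ gives $\gamma_{C,+}$, matching the convention $\sigma(x,y)=(x,-y)$ used to define $N_-$. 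I expect the main obstacle to be precisely this bookkeeping: getting the substitution $u^iw^j\rightsquigarrow x^{i_1}y^{i-2i_1}(y^2-xz)^jz^{i_2}$ together with the exponent $t^{k-j}$ exactly right, verifying that the $t\to 0$ limit in the chart $(u,w)=(x/z,(y^2-xz)/z^2)$ reproduces $P(u,w)$ on the nose (rather than up to an unwanted unit or a shift of Newton polygon), and pinning down the sign convention so that $t<0\leftrightarrow\gamma_{C,-}$ rather than the reverse; everything else is a transcription of the proof of Proposition~\ref{prop:conic1}.
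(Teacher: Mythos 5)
Your proposal is correct and follows essentially the same route as the paper: the paper realizes the degeneration to the normal cone concretely as the hypersurface $tw-(y^2-xz)=0$ in $\CP^3(1,1,1,2)\times\CC$, embeds the compactified $k$-section via $[u:v:w]\mapsto([u^2:uv:v^2:w],0)$, and obtains the displayed equation by eliminating $w=(y^2-xz)/t$ and multiplying by $t^k$ --- exactly your substitution $u^2\rightsquigarrow x$, $u\rightsquigarrow y$ (minor bookkeeping point: $u$ corresponds to $y/z$, not $x/z$, on the conic, consistent with the substitution you actually write). Non-singularity and the $\gamma_{C,\pm}$ dichotomy are handled as you describe, via transversality to the central fibre away from its singular locus and via the involution $w\mapsto -w$ exchanging the two signs of $t$.
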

\begin{rem}
Plugging $t=0$ in the above equation, we see that  $\C_t$ is a real
perturbation of the $k$-multiple of the conic
$\C_0:y^2-xz=0$.
\end{rem}
\begin{proof}
  Let us start with some general considerations.
  Let $\C_0$ be the real conic in $\CP^2$ with equation
%defined by the polynomial
  %  $P_0(x,y,z)=y^2-xz$.
$y^2-xz=0$.
  Fix the coordinate systems $[x:y:z:w]$ on the weighted projective space $V=\CP^3(1,1,1,2)$, 
  and $t$  on the affine line $\CC$ respectively, and consider
  the real hypersurface $\H$ in $V\times\CC$
  with equation
  \[
  tw - y^2-xz=0.
  \]
  By construction, the hypersurface $\H$ comes equipped with a projection
  $\rho:\H\to\CC$, and we define $H_t=\rho^{-1}(t)$.
  For $t\ne 0$,
  %% the hypersurface $H_t$ of $V$ is simple the graph of the function
  %% \[
  %% \begin{array}{ccc}
  %%   \CP^2&\longrightarrow & V
  %%   \\ \relax [x:y:z]&\longmapsto & \left[ x:y:z:\frac{y^2-xz}{t}\right]
  %% \end{array},
  %% \]
  %% and
  the linear projection $V\setminus\{[0:0:0:1]\}\to\{w=0\}=\CP^2$
  restricts to an isomorphism $H_t\to \CP^2$. We use this
  identification of $H_t$ and $\CP^2$ for the rest of the proof.
  The hypersurface $H_0\subset V$ is the cone over $\C_0$ with vertex
  $[0:0:0:1]$, and
  $H_0\setminus\{[0:0:0:1]\}$ is isomorphic to the normal bundle of $\C_0$ in
  $\CP^2$, see Figure \ref{fig:defnc}a for a schematic picture in
  dimension 1. 
  In fact, the deformation to normal cone $\X$ constructed in the
  proof of Proposition \ref{prop:conic1} is the strict transform
  of $\H$ under the blow up of  $V\times\CC$ along  $\{[0:0:0:1]\}\times \CC$.
  In particular the complement $N_0$ in $\RR H_0$ of a small
  neighborhood of $[0:0:0:1]$ deforms in $\RR H_t$ (and so in
  $\RP^2$), with 
  $t\in\RR^*$, to a tubular neighborhood $N_t$ of $\RR\C_0$,
  see Figures \ref{fig:defnc}b and \ref{fig:defnc}c.
       \begin{figure}[h!]
\centering
\begin{tabular}{ccc}
  \includegraphics[width=4.5cm, angle=0]{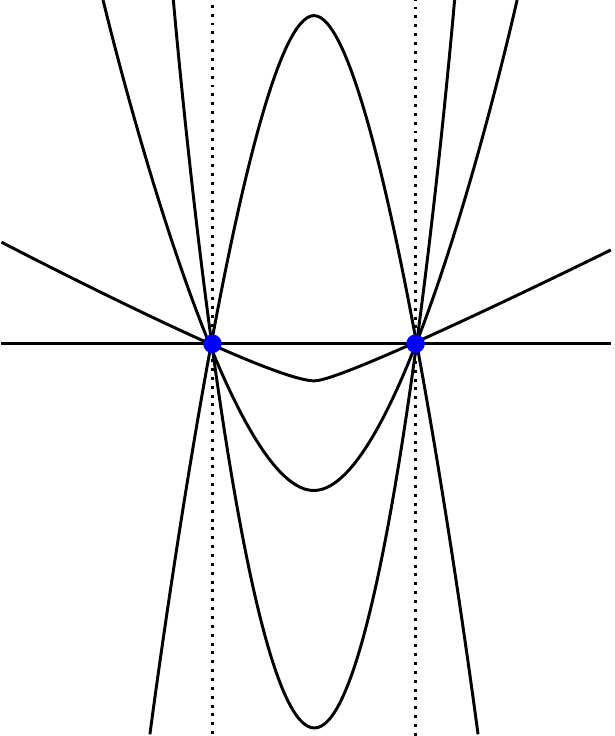}
  \put(-152,100){$\RR H_{t_1}$}
  \put(-152,80){$\RP^2$}
  \put(-126,120){$\RR H_{t_2}$}
  \put(-72,-10){$\RR H_{t_3}$}
   \put(-125,10){$\RR H_{t_4}$}
   \put(-90,160){$\RR H_{0}$}
    \put(-55,160){$\RR H_{0}$}
  \put(-35,70){\textcolor{blue}{$\RR\C_{0}$}}
  \put(-110,70){\textcolor{blue}{$\RR\C_{0}$}}
 &
  \includegraphics[width=4.5cm, angle=0]{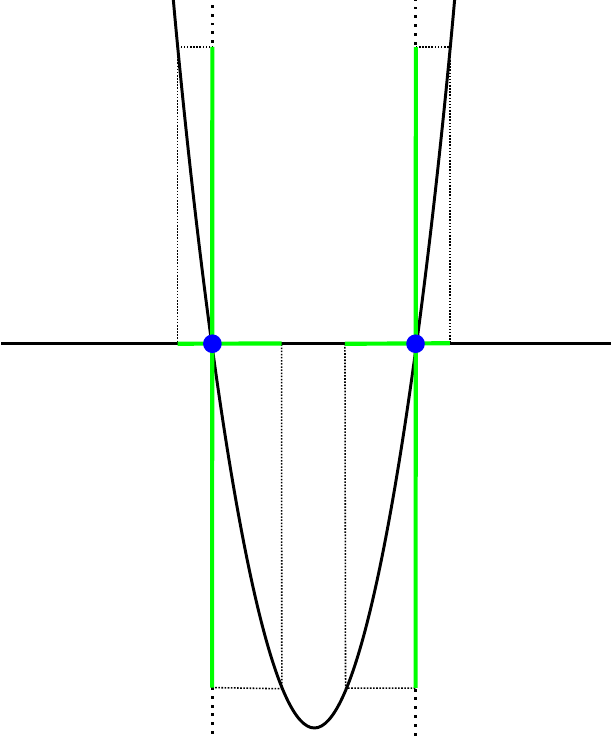}
   \put(-80,130){\textcolor{black}{$N_0$}}
  \put(-75,87){\textcolor{black}{$N_t$}}
 &
  \includegraphics[width=4.5cm, angle=0]{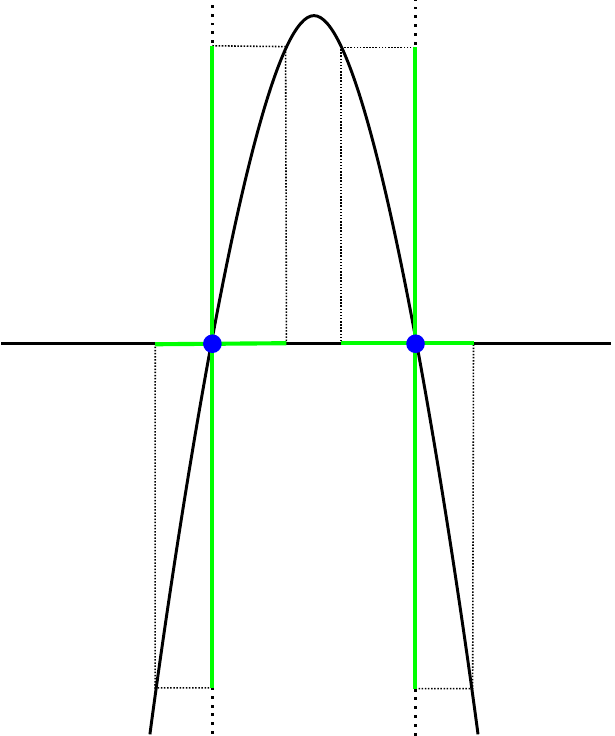}
   \put(-100,130){\textcolor{black}{$N_0$}}
  \put(-77,72){\textcolor{black}{$N_t$}}
  \\
  \\ a) $t_1>t_2>t_3>0>t_4$& b)  & c) 
\end{tabular}
\caption{Deformation to normal cone in image in $\RR V\setminus\{[0:0:0:1]\}$.}
\label{fig:defnc}
    \end{figure}

Now suppose that  $\Y$ is a real algebraic hypersurface in $V\times\CC$
intersecting transversally $H_0$ along a $k$-section $\widetilde C$, and define
$\C_t=H_t\cap \Y$ if $t\ne 0$. If follows from the preceding remarks that 
there exists a homeomorphism of pairs
$\phi_t:(N_0,\RR \widetilde C)\to (N_t,\RR \C_t)$
 for
$t$ real and small enough, see  Figure \ref{fig:defnc2}.
       \begin{figure}[h!]
\centering
\begin{tabular}{ccc}
  \includegraphics[width=5cm, angle=0]{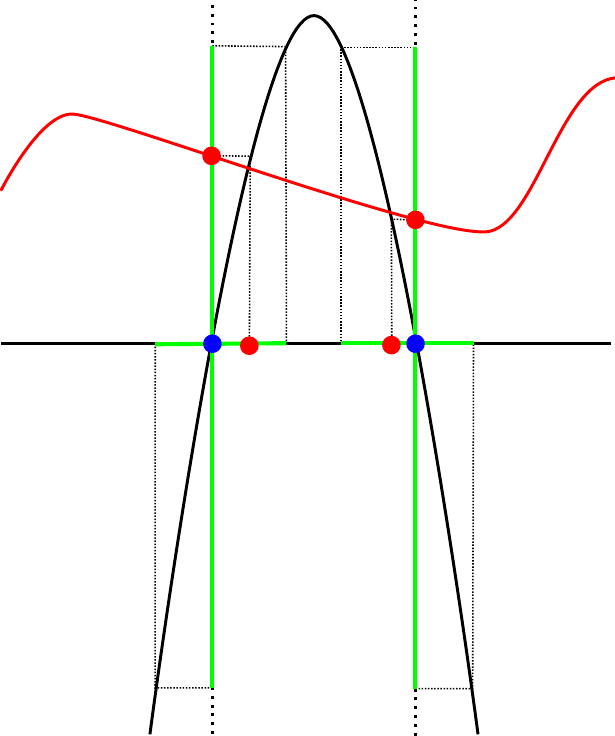}
   \put(-150,140){\textcolor{red}{$\RR\Y$}}
  \put(-110,122){\textcolor{red}{$\RR \widetilde C$}}
  \put(-90,78){\textcolor{red}{$\RR \C_t$}}
 &\hspace{5ex} &
  \includegraphics[width=5cm, angle=0]{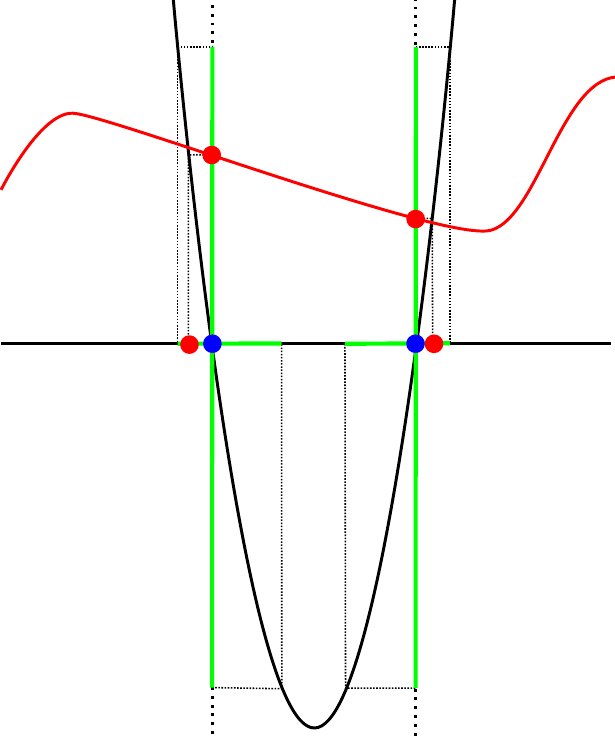}
    \put(-150,140){\textcolor{red}{$\RR\Y$}}
  \put(-88,138){\textcolor{red}{$\RR \widetilde C$}}
  \put(-112,78){\textcolor{red}{$\RR \C_t$}}
 \\
  \\ a) & &b) 
\end{tabular}
\caption{Deformation to normal cone and deformation of hypersurfaces
  ($k=1$ here).}
\label{fig:defnc2}
    \end{figure}
 Furthermore if  $t_1$ and $t_2$ are 
 two small enough real numbers of different signs, the two
 curves $\phi_{t_1}\circ s(\RR \C_{t_1})$ and
 $\phi_{t_2} (\RR \C_{t_2})$
 are isotopic in $\RP^2$, where
 \[
 \begin{array}{cccc}
   s:& V&\longrightarrow & V
   \\ & [x:y:z:w]&\longmapsto &[x:y:z:-w]
 \end{array}.
 \]

\medskip
Now let us go back to the particular setting of the proposition. 
To finish the proof, it is then enough to show that there exists such
real algebraic hypersurface $\Y$ in $V\times\CC$ such that the pair
$(N_0,\RR \widetilde C)$ is homeomorphic to the
pair $(N,\gamma_C)$.

The line bundle  $\mathcal O_{\CC P^1}(4)$ is the
complement of the point $[0:0:1]$ in the weighted projective plane
$W=\CP^2(1,1,4)$. The projection $\mathcal O_{\CC P^1}(4)\to\CP^1$ is
simply the linear projection
$W\setminus \{[0:0:1]\}\to\{w=0\}=\CP^1$. Let us fix a coordinate system
$[u:v:w]$ on $W$ (and hence also on  $\mathcal O_{\CC P^1}(4)$).
In the chart $v=1$,
a $k$-section in  $\mathcal O_{\CC P^1}(4)$ is defined by a polynomial
whose Newton polygon $\Delta$ is contained in
the triangle $\Delta_k$. If in addition
the $k$-section does not pass neither through $[1:0:0]$ nor $[0:1:0]$, 
 then the equality $\Delta=\Delta_k$ holds.
Conversely, the $k$-section $C$ in $\CC^2$, viewed as the chart
$v=1$ of $W$,
compactifies to
a $k$-section $\widetilde C$ in $\mathcal O_{\CC P^1}(4)$ that intersects
transversally the fiber with equation $v=0$, and the pair
$(\RR \mathcal O_{\CC P^1}(4),\RR\widetilde C)$ is homeomorphic to the
pair
$(N,\gamma_C)$.

The embedding
\[
\begin{array}{cccc}
  \psi: & W&\longrightarrow & V\times \CC
  \\ & [u:v:w]&\longmapsto & ([u^2:uv:v^2:w],0)
\end{array}
\]
%commutes with the two linear projections
%$\mathcal O_{\CC P^1}(4)\to\CP^1$ and 
%$V\setminus\{[0:0:0:1]\}\to\CP^2$, and
has image $H_0$.
The closure $\widetilde C$ of $C$ in $W$ has equation
\[
  P(u,v,w)=\sum_{i_1+i_2+4j= 4k}a_{i_1,j}u^{i_1}v^{i_2}w^j.
  \]
If $ i_1+i_2+4j= 4k$, then $i_1$ is even if and only if $i_2$ is
even, so we can rewrite 
  \[
  P(u,v,w)=\sum_{i_1+i_2+2j= 2k}a_{2i_1,j}(u^2)^{i_1}(v^2)^{i_2}w^j
  + \sum_{i_1+i_2+1+2j= 2k}a_{2i_1+1,j}(u^2)^{i_1}(uv)(v^2)^{i_2}w^j.
  \]
  Hence if we define
  \[
  Q(x,y,z,w)=\sum_{i_1+i_2+2j= 2k}a_{2i_1,j}x^{i_1}z^{i_2}w^j
  + \sum_{i_1+i_2+1+2j= 2k}a_{2i_1+1,j}x^{i_1}yz^{i_2}w^j,
  \]
we see that the curve $\psi(\widetilde C)$ is defined  in
$V\times\CC$ by the system of equations
\[
\left\{
\begin{array}{ll}
  t=0
  \\ y^2-xz=0
  \\ Q(x,y,z,w)=0.
\end{array}
\right.
\]
Let $\Y$ be the hypersurface of $V\times\CC$ with equation
\[
 Q(x,y,z,w)=0.
\]
We have seen that $\Y\cap H_0=\widetilde C$, and this intersection is
transverse since $\widetilde C$ is non-singular.
The curve $\C_t=\Y\cap H_t$, with $t\ne 0$, is defined in $V$ by the system of equations
\[
\left\{
\begin{array}{ll}
tw- (y^2-xz)=0
  \\ Q(x,y,z,w)=0.
\end{array}
\right.
\]
Replacing $w$ with
$\frac{y^2-xz}{t}$ in the second equation and multiplying by $t^k$,
yields the equation
of $\C_t$ in $\CP^2$ stated in the proposition.
\end{proof}

\subsection{Controlling real bitangents of a perturbation of a multiple real conic}\label{sec:real bit}

In this section we tweak Proposition \ref{prop:conic2} by describing
the position of the resulting real algebraic curve 
with respect to \emph{all} its real
bitangents.
As before, we consider an even
integer $d=2k$, and $C$  a real $k$-section in $\CC^2$ defined by
the real polynomial
  \[
  P(u,w)=\sum_{i+4j\le 4k}a_{i,j}u^iw^j.
  \]
  We suppose in addition that the curve $C$ is in
  \emph{general position} with respect to the vertical pencil of lines, that is
 the  projection 
\[
\begin{array}{cccc}
\pi:&  C\subset\CC^2&\longrightarrow& \CC
  \\& (x,y) &\longmapsto & x
\end{array}
\]
admits  exactly $d(d-2)$ distinct fibers  $F_1,\ldots,F_{d(d-2)}$ tangent to $C$. 
In other words, all critical points of $\pi$ are non-degenerate and
have distinct critical values.

We need to introduce some notation before giving the main statement of
this section.
Recall the embedding
$\psi:W\to V\times\{0\}$ from the proof of Proposition
\ref{prop:conic2}, and
denote by $p_i$ the intersection of $\psi(F_i)$ with the conic $\C_0$.
Given $i\in\{1,\ldots,d(d-2)\}$,
we denote by $q_i$  the
tangency point of  $F_i$ and $C$, and by
$q_{i,1},\ldots, q_{i,k-2}$ the other points in $C\cap F_i$.
 If $q_i$ and $q_{i,l}$ are both real, we
say that    $q_i$ is \emph{above} or \emph{below} $q_{i,l}$
if the second coordinate of $q_i$ is greater or smaller that
the second coordinate of $q_{i,l}$ respectively.
We further choose $U_i$
 a small neighborhood in $V\times\CC$  of $\psi(q_i)$,
and  $U_{i,l}$ a small neighborhood in
$V\times \CC$ of $\psi(q_{i,l})$.
All neighborhoods $U_i$ and $U_{i,l}$ are chosen to be pairwise disjoint.
Finally given $i\ne j\in\{1,\ldots,d(d-2)\}$, we denote by $L_{i,j}$ the line
in $\CP^2$  through $p_i$ and $p_j$, and by $L_i$ the line
tangent to $\C_0$ at $p_i$.
Since a line in $\CP^2$ corresponds to a point in the dual plane $(\CP^2)^\vee$, the
notion of a limit of a sequence of lines in $\CP^2$ makes sense.

\begin{example}\label{ex:3sec2}
  Let us consider again the 3-section from Example \ref{ex:3sec}.
  We depicted in Figure \ref{fig:bit0}a two real tangent fibers $F_i$
  and $F_j$ of
  $C_3$, the corresponding lines $L_{i,j}$, $L_i$, and $L_j$ in $\CP^2$ are
  depicted in Figure \ref{fig:bit0}b. 
\begin{figure}[h!]
\centering
\begin{tabular}{ccc}
  \includegraphics[width=4cm, angle=0]{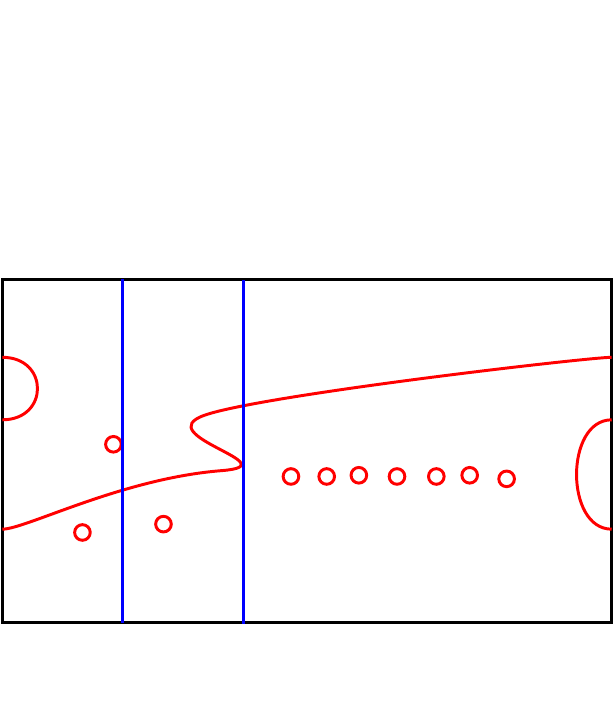}
  \put(-95,5){$F_j$}
   \put(-73,5){$F_i$}
 & \hspace{8ex} &
  \includegraphics[width=4cm, angle=0]{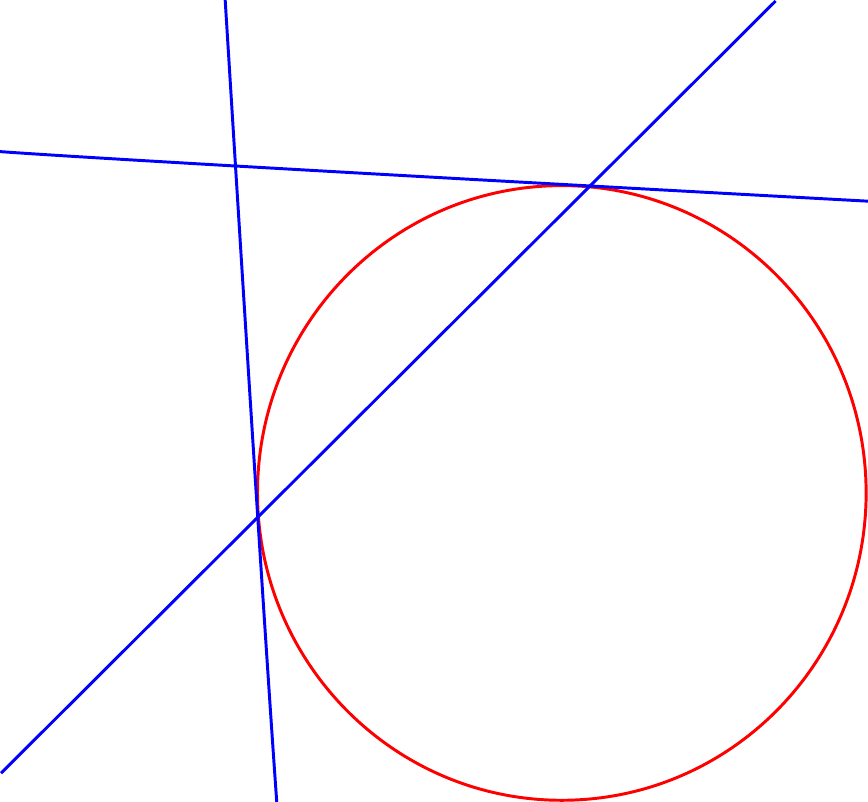}
   \put(-50,-10){$\C_0$}
   \put(-35,73){$p_i$}
   \put(-76,35){$p_j$}
      \put(-115,90){$L_i$}
  \put(-81,100){$L_j$}
   \put(-10,100){$L_{i,j}$}
\\
  \\ a) && b)
\end{tabular}
\caption{The $3$-section $C_3$ and two of its tangent fibers.}
\label{fig:bit0}
    \end{figure}
\end{example}

As in Proposition \ref{prop:conic2}, we consider  the 
  real algebraic curve  $\C_t$ of degree $d$
  defined by the homogeneous equation
  \[
  \sum_{i_1+i_2+2j= 2k} t^{k-j} a_{2i_1,j} x^{i_1}(y^2-xz)^jz^{i_2}  +
   \sum_{i_1+i_2+2j+1= 2k} t^{k-j} a_{2i+1,j} x^{i_1}y(y^2-xz)^jz^{i_2} =0.
  \]

  \begin{theo}\label{thm:bit}
A line $L$ in $\CP^2$ is the limit of a bitangent of $\C_t$ as $t$
tends to $0$ if and only if $L=L_i$ or $L=L_{i,j}$.
Furthermore:
\begin{enumerate}
\item $L_{i,j}$ is the limit of exactly
  1 bitangent $L_{i,j,t}$ of $\C_t$.
 The line $L_{i,j,t}$  is tangent to $\C_t$ at one point in $U_{i}$
 and one in $U_j$.
 In particular $L_{i,j,t}$ is real if and only if
  $p_i$ and $p_j$ are both real or complex conjugated, and its
 position with respect to $\RR\C_t$ is determined by $\RR C$, see
 Figure
 \ref{fig:bit}.
       \begin{figure}[h!]
\centering
\begin{tabular}{ccc}
  \includegraphics[width=6cm, angle=0]{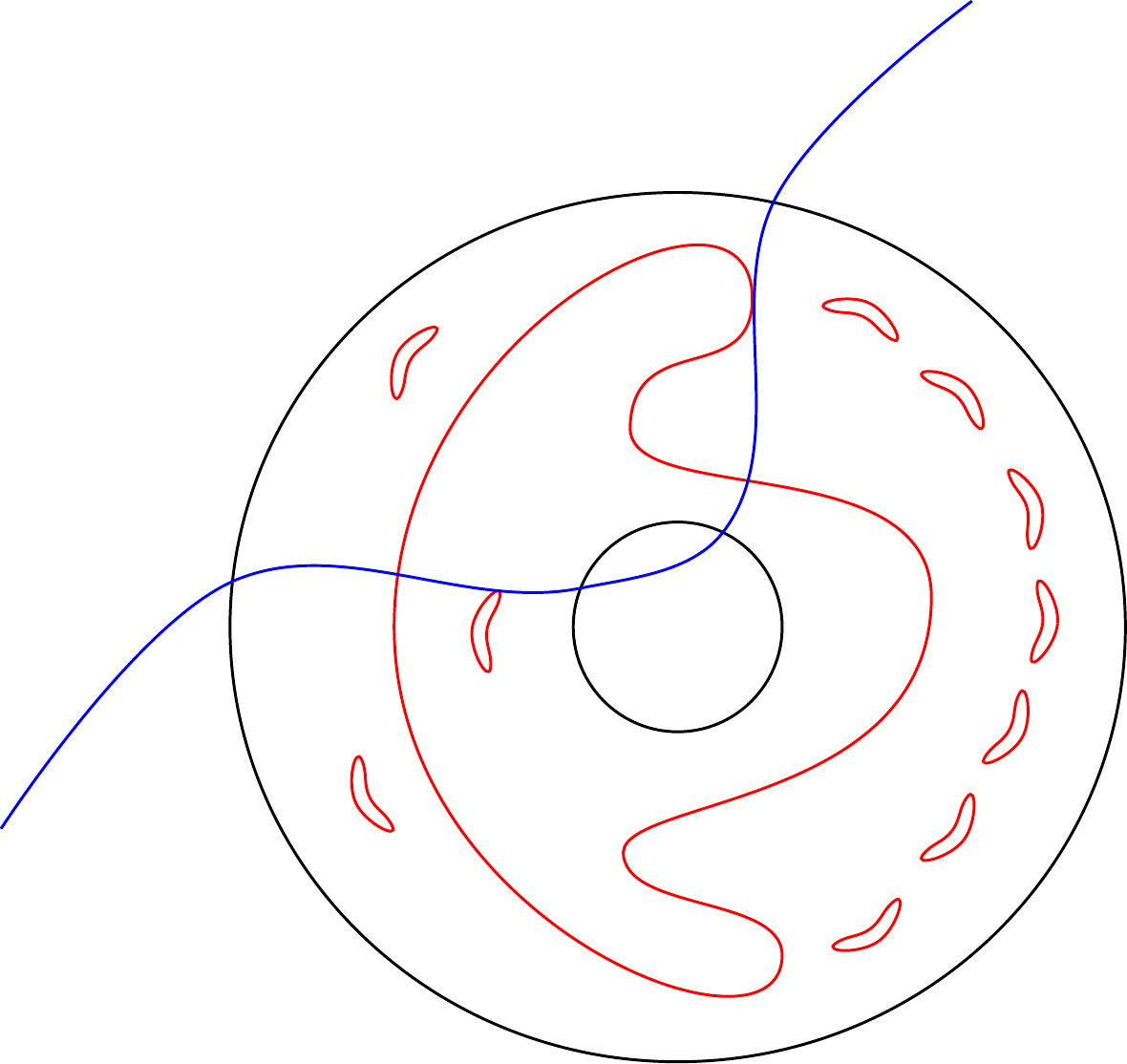}
  &\hspace{5ex}&
  \includegraphics[width=6cm, angle=0]{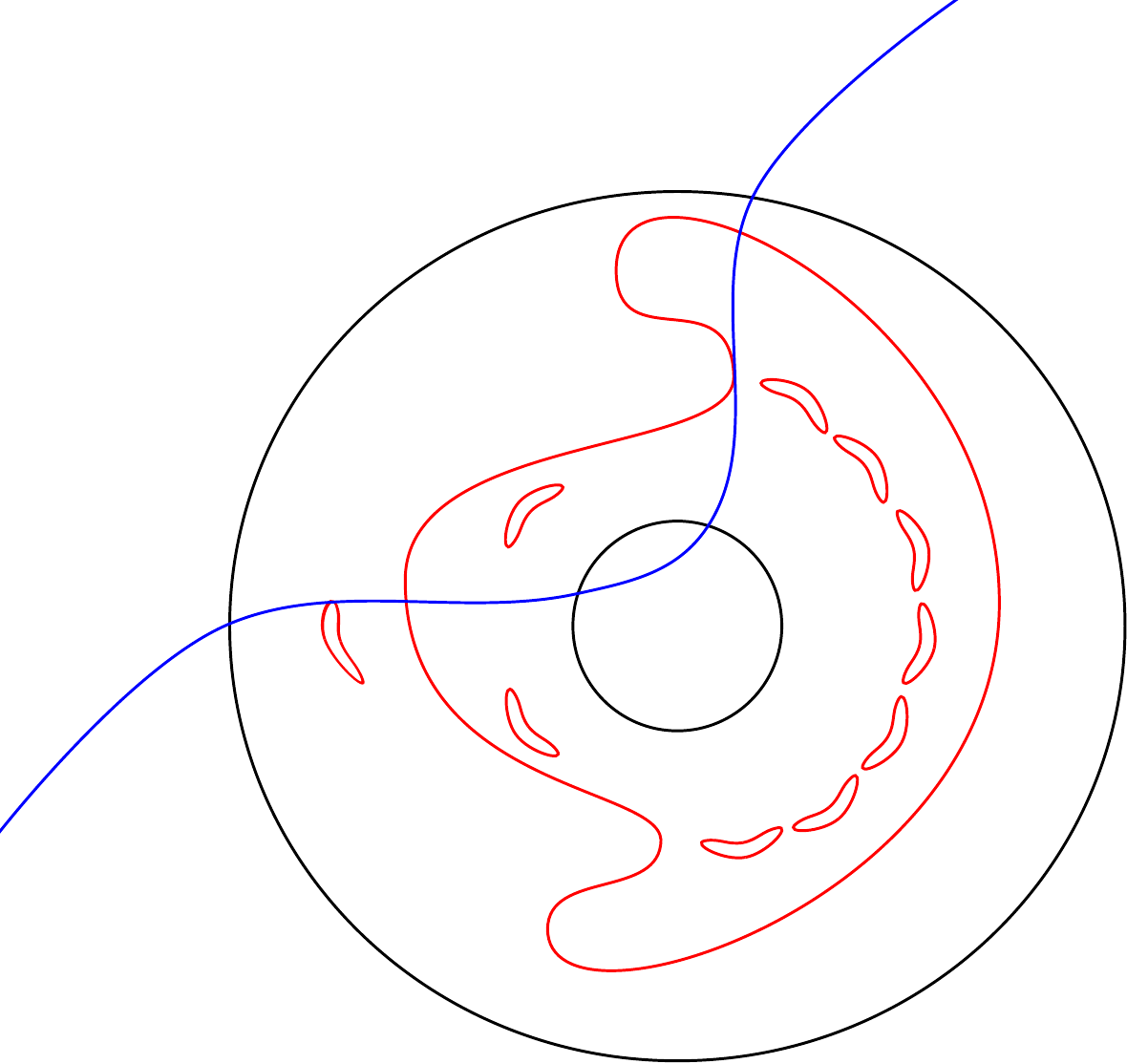}
  \\
  \\ $t<0$ && $t>0$
\end{tabular}
\caption{Deformation of a line $L_{i,j}$ to a bitangent of $\RR \C_t$,
 Example \ref{ex:3sec2} continued. In order to make the picture
 intelligible without a microscope, the small neighborhood of
 $\RR \C_0$  containing $\RR \C_t$
 has been dramatically stretched. As a result the deformation of $L_i$
does not look straight anymore; nevertheless the arrangement
of $\RR\C_t$ and $\RR L_{i,j,t}$ in $\RP^2$ is isotopic to this picture.}
\label{fig:bit}
    \end{figure}

\item
  $L_i$ is the limit of exactly
  $2(k-2)$ bitangents  of $\C_t$. For each $l\in\{1,\ldots, k-2\}$,
  there exist exactly 2 bitangents $L^1_{i,l,t}$ and $L^2_{i,l,t}$
  of $\C_t$ that are  tangent to $\C_t$  at one point in $U_{i}$
 and one  in $U_{i,l}$, and converging to $L_i$.
The bitangents  $L^1_{i,l,t}$ and $L^2_{i,l,t}$ are  both real
 if and only if
both $q_i$ and $q_{i,l}$ are real and
\begin{enumerate}
\item $t>0$ and $q_i$ is above $q_{i,l}$;
\item   $t<0$ and $q_i$ is below $q_{i,l}$;
\end{enumerate}
otherwise, both are non-real.
When  $L^1_{i,l,t}$ and $L^2_{i,l,t}$ are real, their
position with respect to $\RR\C_t$ is determined by $\RR C$, see
 Figure
 \ref{fig:bit2}.
       \begin{figure}[h!]
\centering
\begin{tabular}{ccc}
  \includegraphics[width=6cm, angle=0]{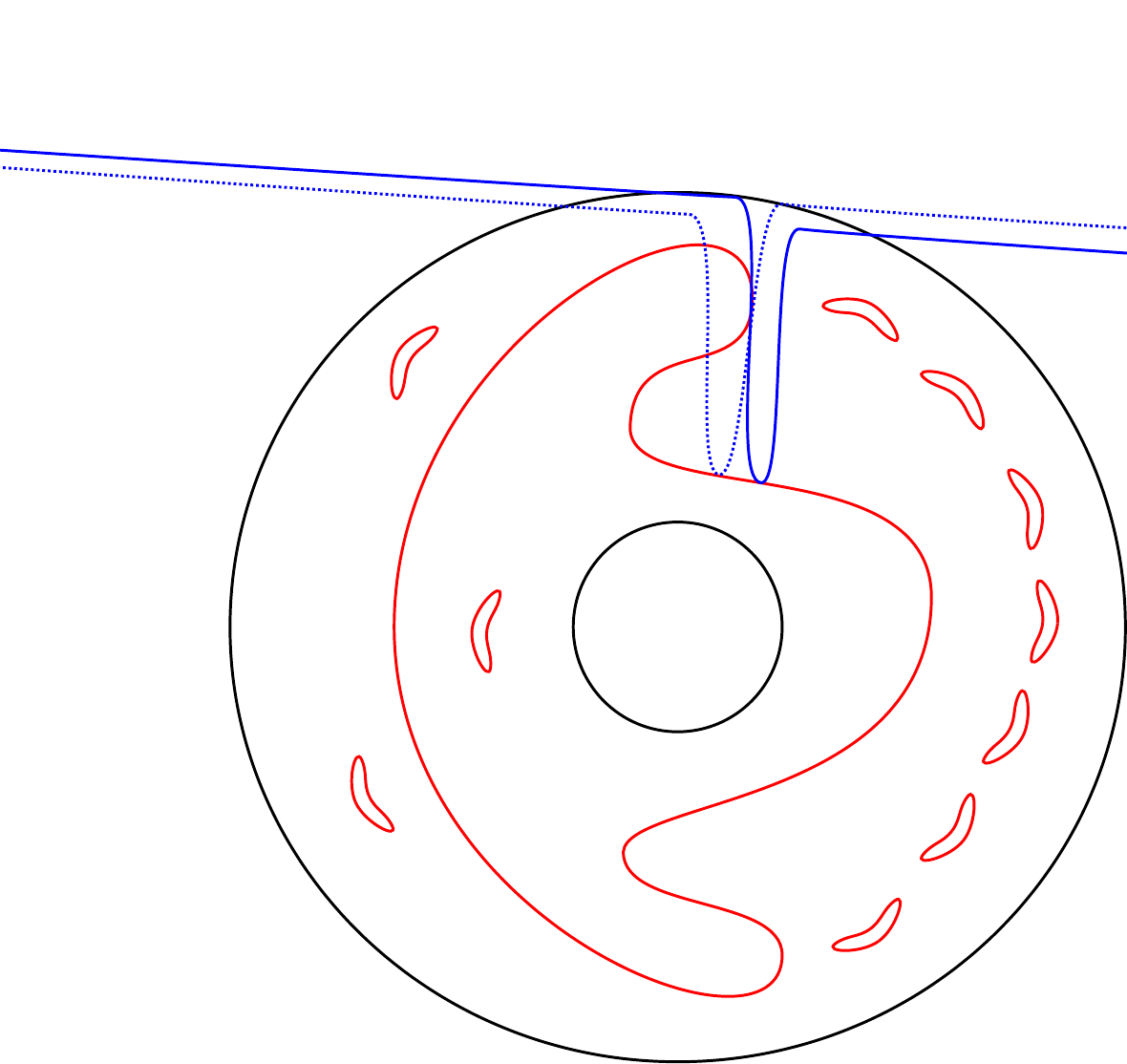}
   \put(-170,145){$L^1_{j,1,t}$}
   \put(-170,122){$L^2_{j,1,t}$}
 &\hspace{5ex}&
  \includegraphics[width=6cm, angle=0]{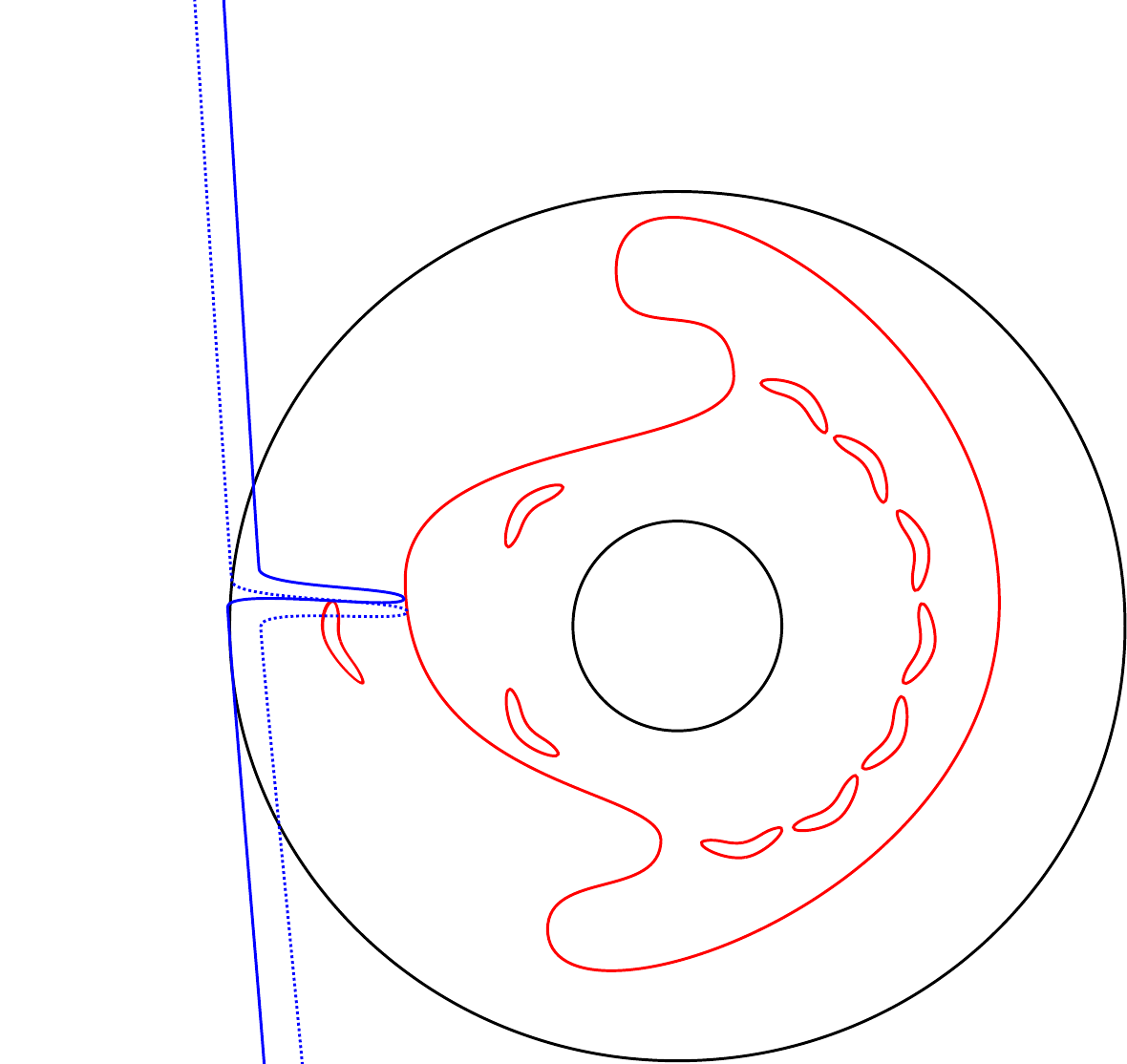}
    \put(-167,145){$L^2_{i,1,t}$}
   \put(-128,145){$L^1_{i,1,t}$}
  \\
  \\  $t<0$ && $t>0$
\end{tabular}
\caption{Deformation of a line $L_{i}$ to real bitangents of $\RR \C_t$,
 Example \ref{ex:3sec2} continued.}
\label{fig:bit2}
    \end{figure}
\end{enumerate}
  \end{theo}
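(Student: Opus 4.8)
The plan is to track the bitangents of $\C_t$ through the degeneration $t\to 0$, working inside the family $\H\subset V\times\CC$ built in the proof of Proposition~\ref{prop:conic2}. For small $t\neq 0$ a fixed tubular neighbourhood of $\RR\C_0$ in $H_t=\CP^2$ is, by that proof, the zero section of the normal bundle $\mathcal O_{\CP^1}(4)=\mathcal N_{\C_0/\CP^2}$ with the fibre coordinate rescaled by $t$; concretely, in the chart $z=1$ the relation $tw=y^2-xz$ reads $x=y^2-tw$, so that $w=(y^2-x)/t$ is a local normal coordinate and, to leading order in $t$, $\C_t$ is the graph of the $k$-section $C$ under this substitution. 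The first step is to record the three resulting local models of $\C_t$ near $\C_0$, which follow from the genericity of $C$ with respect to the vertical pencil: over a point of $\C_0$ distinct from all the $p_i$ the fibre of $\pi$ is transverse to $C$, so $\C_t$ is a disjoint union of $k$ smooth arcs, each a small deformation of a strictly convex arc of $\C_0$; near $\psi(q_i)$ the fibre $F_i$ is simply tangent to $C$, so $\C_t$ is a smooth \emph{fold} arc of the form $u-u_i=\mu(w-\omega_i)^2+\cdots$ with $\mu\neq 0$, whose two local branches collide at a point tending to $\psi(q_i)$; and near $\psi(q_{i,l})$ the fibre meets $C$ transversally, so $\C_t$ is a single smooth arc through a point tending to $\psi(q_{i,l})$.

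Next I would show that every line occurring as a limit of a bitangent of $\C_t$ is one of the $L_i$ or $L_{i,j}$. A bitangent $L_t$ has two (possibly complex conjugate) contact points $P_t,Q_t$; along a sequence $t\to0$ these converge, say $P_t\to P_0\in\C_0$, and since $L_t=T_{P_t}\C_t$ converges to $T_{P_0}\C_0$ we get $L_0=T_{P_0}\C_0$. If $P_0$ were not one of the $p_i$, then near $P_0$ the curve $\C_t$ is a disjoint union of $k$ strictly convex arcs all close to the same arc of $\C_0$; a short convexity-and-closeness estimate (of the kind already used above, where common tangents of two such arcs are shown to have contact points at bounded distance from $P_0$) shows that no line is tangent to this configuration at two points both close to $P_0$. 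On the other hand $T_{P_0}\C_0$ meets the conic $\C_0$ only at $P_0$, so $Q_t\to P_0$ as well, a contradiction. Hence each contact point converges to some $p_i$; if both converge to the same $p_i$ then $L_0=T_{p_i}\C_0=L_i$, and if they converge to $p_i\neq p_j$ then $L_0$ passes through both, so $L_0=L_{i,j}$. As a tangent of $\C_0$ meets it only at the contact point and a secant through $p_i,p_j$ meets it only there, the lines $L_i$ and $L_{i,j}$ are pairwise distinct.

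The main work is then to count, for each $L_i$ and each $L_{i,j}$, the bitangents of $\C_t$ converging to it, together with their reality and position; this I would do by studying the dual curve $\C_t^\vee$ in a fixed small neighbourhood of the corresponding point of $(\CP^2)^\vee$. From the local models one reads off that the arc of $\C_t^\vee$ dual to the fold near $\psi(q_i)$ converges to $p_i^\vee\cup\C_0^\vee$ (the line dual to $p_i$ and the dual conic, joined at $[L_i]$), the $p_i^\vee$ part coming from the contact points that approach the fold point, that the arc dual to a sheet near $\psi(q_{i,l})$ converges to $\C_0^\vee$, and that a tangent of $\C_t$ close to $L_i$ or to $L_{i,j}$ can touch $\C_t$ on a sheet near $\psi(q_{i,l})$ only if its slope is close to that of $L_i$. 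Near $[L_{i,j}]$ — where the slope differs from that of $L_i$ — the curve $\C_t^\vee$ is therefore just the union of the two fold‑duals at $\psi(q_i)$ and $\psi(q_j)$, converging to the two distinct lines $p_i^\vee,p_j^\vee$, which cross transversally at $[L_{i,j}]=p_i^\vee\cap p_j^\vee$; smoothing this transverse crossing yields exactly one node of $\C_t^\vee$ there, i.e. exactly one bitangent $L_{i,j,t}$, tangent to $\C_t$ in $U_i$ and in $U_j$, hence real precisely when $p_i,p_j$ are both real or complex conjugate, its position relative to $\RR\C_t$ being then determined by the two local fold pictures, i.e. by $\RR C$ and the sign of $t$ (which, by Proposition~\ref{prop:conic2}, fixes whether $\RR\C_t$ is isotopic to $\gamma_{C,+}$ or $\gamma_{C,-}$). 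Near $[L_i]$, the curve $\C_t^\vee$ is the union of the fold‑dual and the $k-2$ sheet‑duals; since $p_i^\vee$ is tangent to $\C_0^\vee$ at $[L_i]$, the fold‑dual and the $l$-th sheet‑dual meet in exactly $2$ points near $[L_i]$ (the $\C_0^\vee$ part of the fold‑dual meeting the sheet‑dual only away from $[L_i]$), giving $2$ bitangents tangent in $U_i$ and in $U_{i,l}$, hence $2(k-2)$ in all; the remaining limiting crossings near $[L_i]$ — a sheet‑dual with another sheet‑dual, or the $\C_0^\vee$ part of the fold‑dual with a sheet‑dual — are common tangents of two nearby convex arcs close to $\C_0$ and so have contact points at bounded distance from $p_i$, contributing no bitangent near $[L_i]$, while the fold‑dual and each sheet‑dual are themselves smooth there, being duals of convex arcs. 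Whether the two bitangents $L^1_{i,l,t},L^2_{i,l,t}$ are real, and where they sit when real, is then governed by whether this length‑$2$ contact smooths hyperbolically or elliptically, which the local model pins down in terms of the relative height of $q_i$ and $q_{i,l}$ and the sign of $t$, as in the statement.

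Finally, since the neighbourhoods of the $[L_i]$ and $[L_{i,j}]$ are pairwise disjoint and the local counts add up to $\binom{d(d-2)}{2}+2(k-2)\,d(d-2)=t_\CC(d)$, the total number of bitangents of the generic curve $\C_t$, every bitangent of $\C_t$ lies in one of them; this gives the ``only if'' direction and confirms that all the local counts above are exact, completing the proof. The step I expect to be the main obstacle is the local count near $L_i$: extracting the factor $2$ and the precise reality and position criteria amounts to a careful Puiseux‑type analysis of the common tangents of the fold $u-u_i=\mu(w-\omega_i)^2+\cdots$ and a transverse sheet, keeping track of the correct rescalings of the tangency parameters as $t\to0$; by contrast the convexity estimate of the second step and the identification of the limits of the fold‑ and sheet‑duals are comparatively routine once the local models are in place.
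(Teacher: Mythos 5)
Your proposal follows essentially the same route as the paper: degenerate to the normal cone of the conic, track the limit of the dual curve $\C_t^\vee$ (which the paper identifies as $\tfrac d2 \C_0^\vee + \sum_i \widehat L_i$ with $\widehat L_i = p_i^\vee$ the tangent of $\C_0^\vee$ at $L_i^\vee$), locate the nodes near $\widehat L_i\cap\widehat L_j$ and near $[L_i]$, and close the argument with the global count $\binom{d(d-2)}{2}+2(k-2)d(d-2)=t_\CC(d)$. The step you flag as the main obstacle --- extracting exactly two bitangents near each $L_i$ per transverse sheet and their reality --- is precisely where the paper computes explicit local dual parameterizations $\theta_{\nu,t}^\vee$, reduces the common-tangency equation to a polynomial $R_t(u)$ whose specialization $R_0$ has a double root at the origin, and reads off the reality criterion from the sign of $t$ via the order of vanishing of the discriminant.
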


%%  We prove Theorem \ref{thm:bit}
%%   by locating (real) nodes of the curve $\C_t^\vee$ dual
%%   to $\C_t$.
%% Next statement describes
%%   $\C_t^\vee$ for small values of $t$, and implies  Theorem \ref{thm:bit}.
%% Since  $\C_t^\vee$ is an algebraic curve of degree $d(d-1)$ in
%% $(\CP^2)^\vee$, the limit  $\C^l_0^\vee$ of  $\C_t^\vee$ as $t$ goes to $0$ makes
%% sense in the linear system $\big|\O_{(\CP^2)^\vee}(d(d-1))\big|$.
%% Recall that the dual $\C_0^\vee$ of the conic $\C_0$ is also a conic.
%% \begin{theo}\label{thm:lim dual}
%%   As a divisor, one has
%%   \[
%%   \C^l_0^\vee= \frac{d}{2} \C_0^\vee + L_1+\ldots + L_{d(d-2)},
%%   \]
%%   where $L_i$ is the tangent line of $\C_0^\vee$ at $p_i^\vee$.
%%   Furthermore
%%   \begin{itemize}
%%   \item   a local branch of
%%   $\C_0^\vee\setminus \{p_1^\vee,\ldots,p_{d(d-2)}^\vee \}$
%% is the limit of   $\frac{d}{2}$ pairwise disjoint
%% non-singular local branches of $\C_t^\vee$;

%% \item a local branch of $L_i\setminus\{p_i^\vee\}$ is the limit 
%% of a unique non-singular
%% local  branches of $\C_t^\vee$;

%% \item in a neighborhood of $p_i^\vee$, 

%%   a local branch of
%%   $\C_0^\vee\setminus \{p_1^\vee,\ldots,p_{d(d-2)}^\vee \}$
%% deforms to  $\frac{d}{2}$ pairwise disjoint
%% non-singular local branches of $\C_t^\vee$;
  
%% \end{itemize}
%% \end{theo}

  \begin{proof}
The proof goes by locating (real) nodes of the curve $\C_t^\vee$ dual
to $\C_t$. Recall that  nodes of $\C_t^\vee$ correspond to
bitangents of $\C_t$, and that cusps of $\C_t^\vee$ correspond to
flexes of $\C_t$. Furthermore if $\C_t^\vee$ has exactly
$\frac12 d(d-2)(d-3)(d+3)$ nodes, then $\C_t^\vee$ has only nodes and
cusps as its singularities.
To locate the nodes of $\C_t^\vee$, we write local parameterizations
of $\C_t^\vee$ that are dual of local parameterizations
of $\C_t$.
    
    We define
    $\Omega=\{z\in \CC\  \ |z|<1\}$, and we denote by 
    $\pi:D(4)\to\CP^1$ the $\Omega$-bundle which is the restriction of
    $\pi:\O_{\CP^1}(4)\to \CP^1$ for some choice of a Hermitian metric on
    $\O_{\CP^1}(4)$.  Up to rescaling by the action of $\CC^*$ on
    $\O_{\CP^1}(4)$, we may assume that
    that $C\subset D(4)$.
The projection $\pi=\pi_0: D(4)\to \CP^1$ deforms to
a disk bundle $\pi_t:N_t\to \C_0$ for $t$ small enough,
where $N_t$ is the tubular
neighborhood of $\C_0$ from the proof of Proposition
\ref{prop:conic2}.
Let $\nu\in C\subset \O_{\CP^1}(4)$
 and $\rho_{\nu,0}: \Omega\to C$   be a local holomorphic parameterization
of $C$ at $\nu$, with  $\rho_{\nu,0}(0)=\nu$.
We denote by $\nu_t$ a deformation of $\nu$ in $\C_t$, and
by  $\rho_{\nu,t}: \Omega\to \C_t$ a deformation of $\rho_{\nu,0}$ such that
$\rho_{\nu,t}(0)=\nu_t$. 
We define $\theta_{\nu,t}=\pi_t\circ\rho_{\nu,t} :\Omega\to \C_0$. 
Without loss of generality, we may further assume
that
\begin{enumerate}
%\item if $\nu_0$ is not a ramification point of $\pi_{|C}:C\to\CP^1$,
%  then $\pi_t\circ \rho_t:\Omega_r\to \CP^1$ has no ramification
%  point for $t$ small enough;
  \item the fiber of $\O_{\CP^1}(4)$ passing through $\nu$
    intersects $\CP^2$ at the point $[0:0:1]$.
\item
if $\nu$ is  a critical  point of $\pi|_{C}:C\to\CP^1$,
  then    $0$ is a critical point of $\theta_{\nu,t}$
  with critical value $[0:0:1]$ for $t$ small enough.  
%\item  $0$ is a critical point of $\theta_t$ for $t$ small enough if it
%  is for $t=0$;
\end{enumerate}

Hence in the chart $z=1$, the map $\theta_{\nu,t}:\Omega\to \CP^2$ can be
written as
\[
\theta_{\nu,t}(u)=(u^{2n_0}+t\alpha_t(u), u^{n_0}+ t\beta_t(u))
\]
%\begin{enumerate}
%\item $\theta_t(u)=(u^2+t\alpha_t(u), u+ t\beta_t(u))$ if $\nu_0$ is not
%  a critical point of $\pi_0$;
% \item $\theta_t(u)=(u^4+t\alpha_t(u), u^2+ t\beta_t(u))$ if $\nu_0$ is 
%  a critical point of $\pi_0$; 
%\end{enumerate}
where  $\alpha$ and
$\beta$ are two holomorphic functions in $(t,u)$, and $n_0$ is the
ramification index of $\pi_0$ at $\nu$. Recall that by assumption on
$C$, the integer $n_0$ can only take the values $1$ and $2$.

\bigskip
Let $[a:b:c]$ be the coordinate system on $(\CP^2)^\vee$ dual to the
 chosen  coordinate system $[x:y:z]$ on $\CP^2$. That is, the point
 $[a:b:c]\in(\CP^2)^\vee$ corresponds to the line in $\CP^2$ with
 equation
 \[
 ax+by+cz=0.
 \]
 One computes easily that the conic  $\C_0^\vee$ in   $(\CP^2)^\vee$ dual to the
 conic $\C_0$ in $\CP^2$ has equation
 \[
 b^2-4ac=0.
 \]

We denote by $\theta^\vee_{\nu,t}$ the dual of the above map $\theta_{\nu,t}$, that
is to say the map that associates
to $u$ the line in $(\CP^2)^\vee$ tangent to $\theta_{\nu,t}(u)$.
In what follows, we denote by $O_\H(t^ku^l)$ a
function defined on a neighborhood $U$ of $0$ in $\CC^2$ and
of the form $t^ku^l\gamma(t,u)$ with $\gamma$ holomorphic on $U$.
The line in $\CC^2$ passing through the point $(x_0,y_0)$ with
directing vector $(v_1,v_2)$ has equation
\[
v_2x - v_1y -v_2x_0 + v_1y_0.
\]
Hence we deduce from
\[
\theta_{\nu,t}'(u)=\left(
2n_0u^{2n_0-1}+t\alpha'_t(u) , n_0u^{n_0-1}+t\beta'_t(u) 
\right)
\]
that
\begin{align*}
  \theta_{\nu,t}^\vee(u)&=
  \left[
    n_0u^{n_0-1}+t\beta'_t(u) : -2n_0u^{2n_0-1}-t\alpha'_t(u):\right.
 \\ &\qquad\qquad \left.  -(   n_0u^{n_0-1}+t\beta'_t(u))(u^{2n_0}+t\alpha_t(u))
 +(2n_0u^{2n_0-1}+t\alpha_t'(u))(u^{n_0}+ t\beta_t(u))
    \right]
 %% \\ \\ &=
 %%   \left[
 %%    n_0u^{n_0-1}+t\beta'_t(u) : -2n_0u^{2n_0-1}-t\alpha'_t(u):  n_0 u^{3n_0-1}-n_0tu^{n_0-1}\alpha_t(u) +O_\H(tu^{n_0}) +O_\H(t^2)    \right].
\end{align*}

\noindent {\bf Case 1: $n_0=1$.} One has then
\begin{align*}
  \theta_{\nu,t}^\vee(u)&=
   \left[
    1+O_\H(t) : -2u-t\alpha'_t(u):   u^{2}-t\alpha_t(u) +O_\H(tu) +O_\H(t^2)    \right].
\end{align*}

In the chart $a=1$ of  $(\CP^2)^\vee$, this gives
\begin{align*}
  \theta_{\nu,t}^\vee(u)&=\left(
  \frac{-2u-t\alpha'_t(u)}{1+O_\H(t)},
   \frac{ u^{2}-t\alpha_t(0) + O_\H(tu) +O_\H(t^2) }{1+O_\H(t)}
   \right)
   \\
   \\&=\left(
  -2u-t\alpha'_t(0) +O_\H(tu) +O_\H(t^2) ,
   u^{2}-t\alpha_t(0) +O_\H(tu)  +O_\H(t^2) 
   \right) .
\end{align*}
In particular we find that $\theta_{\nu,t}^\vee$ converges when $t$ goes to
$0$ to the local parameterization $u\mapsto (-2u,u^2)$ at $(0,0)$ of
$\C_0^\vee$, in accordance with the fact that $\pi_{0}|_C:C\to\CP^1$ is a local
biholomorphism  at $\nu$.
%% In the following of the proof, 
%% Let us compute the number of intersection points of $\C_0^\vee$ and
%% $\theta_t(u)$ for $(t,u)$ in a small neighborhood  of $0$ in
%% $\CC^2\setminus\{0\}$.
%% To that aim, it is enough to only look at the terms of order
%% at most one in $u$ and $t$. One has
%% \begin{align*}
%%  \left(
%%   -2u-t\alpha'_t(u) ,
%%    u^{2}-t\alpha_t(u) 
%%    \right) \in  \C_0^\vee& \Longleftrightarrow
%%    ( -2u-t\alpha'_t(u))^2 =  4u^{2}-4t\alpha_t(u)
%%    \\& \Longleftrightarrow 4u\alpha'_t(u) +4\alpha_t(u)+
%%    t\alpha'^2_t(u) =0 \quad (E).
%% \end{align*}
%% If $\alpha_0(0)\ne 0$, then 
%% $0$ is not a solution of the equation
%% $4u\alpha'_0(u)+4\alpha_0(u)$. Hence

\noindent {\bf Case 2: $n_0=2$.}

By assumption $(ii)$ above, we have $\beta_t(0)=\beta'_t(0)=0$. Hence
$\beta_t(u)=O_\H(u)$ and $\beta'_t(u)=O_\H(u)$. Hence one has
\begin{align*}
  \theta_{\nu,t}^\vee(u)&=
   \left[
    2u+O_\H(tu) : -4u^{3}-t\alpha'_t(u):  2 u^{5}-2tu\alpha_t(u) +O_\H(tu^2) +O_\H(t^2u)    \right].
\end{align*}
In the chart $a=1$ of  $(\CP^2)^\vee$, one obtains
\begin{align*}
  \theta_{\nu,t}^\vee(u)&=\left(
  \frac{ -4u^{3}-t\alpha'_t(u)}{ 2u+O_\H(tu)  },
  \frac{2 u^{5}-2tu\alpha_t(0) +O_\H(tu^2)  +O_\H(t^2u)  }{2u+O_\H(tu) }
  \right)
  \\
  \\
  &= \left(
  \frac{ -4u^{3}-t\alpha'_t(0)+O_\H(tu^3)+O_\H(t^2)}{ 2u },
   u^{4}-t\alpha_t(0) +O_\H(tu) +O_\H(t^2)
  \right)
\end{align*}
Hence in a neighborhood of $0$ in $(\CC^*)^2$, the map
$\theta_{\nu,t}^\vee(u)$ is a small perturbation of the map
\begin{align*}
  \widetilde \theta_{\nu,t}^\vee(u)&=\left(
  \frac{ -4u^{3}-t\alpha'_t(0)}{ 2u },
   u^{4}-t\alpha_t(0)
  \right)
\end{align*}
Note that when $t$ goes to $0$, the image of the map
$\theta_{\nu,t}^\vee(u)$ converges to the union of the line $c=0$ with a
neighborhood of $[1:0:0]$ in $\C_0^\vee$ covered twice.

Alltogether, this proves the following: the limit of $\C_t^\vee$ as
$t$ goes to $0$ is the divisor
\[
 \frac{d}{2} \C_0^\vee + \widehat L_1+\ldots +  \widehat L_{d(d-2)}
\]
in $\big|\O_{(\CP^2)^\vee}(d(d-1))\big|$,
where $ \widehat L_i$ is the tangent line of $\C_0^\vee$ at
 $L_i^\vee$. Furthermore if $i\ne j$, the intersection point of
 $ \widehat L_i$ and $ \widehat L_j$ deforms to a node of $\C_t^\vee$.
 Since the point $ \widehat L_i\cap \widehat L_j$ in $(\CP^2)^\vee$
 corresponds to the line $L_{i,j}$ in $\CP^2$, this proves part
 $(i)$ of the theorem.

 \bigskip
To prove part $(ii)$ of the theorem, choose
 $\mu\neq\nu_0 $ another point of $C$ on the same fiber than
$\nu$. By assumption on $C$, the point  $\mu$ is not a critical point
of $\pi_{0|C}$, and 
we have
\begin{align*}
  \theta_{\mu,t}^\vee(u)&=\left(
  -2u-t\widehat\alpha'_t(0) +O_\H(tu) +O_\H(t^2) ,
   u^{2}-t\widehat\alpha_t(0) +O_\H(tu)  +O_\H(t^2) 
   \right) .
\end{align*}
with $\widetilde\alpha$ a holomorphic function in $(t,u)$.
We now show the following reformulation of
the first part of statement $(ii)$ of the theorem:
the equation
\[
\theta_{\nu,t}^\vee(u)=\theta_{\mu,t}^\vee(v)
\]
has exactly two solutions that
converge to $0$ as $t$ goes to $0$.
To that aim, it is enough
%\footnote{{\bf More details are probably needed here (Weierstrass preparation theorem?).}} 
  to count the
number of small but no-null solutions of
the equation
\begin{equation}\label{eq:pt2}
  \widetilde\theta_{\nu,t}^\vee(u)=\widetilde\theta_{\mu,t}^\vee(v),
\end{equation}
where
\begin{align*}
  \widetilde\theta_{\mu,t}^\vee(v)&=\left(
  -2v-t\widehat\alpha'_t(0),
   v^{2}-t\widehat\alpha_t(0)
   \right) .
\end{align*}
The first coordinate gives
\[
v=\frac{ 4u^{3}+t\alpha'_t(0)-2tu\widehat\alpha'_t(0)}{4u},
\]
which plugged into the second coordinate gives after simplification
\[
\frac {tR_t(u)}{16u^6}=0,
\]
with
\[
R_t(u)= -16\widehat\alpha'_t(0)u^4  + 8\alpha'_t(0)u^3
+(4t\widehat\alpha'^2_t(0) -16\widehat\alpha_t(0)
  +16\alpha_t(0))u^2
  -4t\alpha'_t(0)\widehat\alpha'_t(0)u+
t\alpha'^2_t(0)  
  \]
  Since $0$ is a double root of
  \[
  R_0(u)=-16\widehat\alpha'_0(0)u^4  + 8\alpha'_0(0)u^3
+(-16\widehat\alpha_0(0)
  +16\alpha_0(0))u^2,
 \]
 the polynomial $R_t(u)$ has two roots in an arbitrary small
 (depending on $t$)
 neighborhood of the origin, which proves the claim.

 Furthermore note that, since the coefficients of $R_t(u)$ of degree 1
 and 0 are multiples of $t$ and that other coefficients have order 0
 in $t$ at 0, the discriminant of $R_t(u)$ has order 1
 in $t$ at 0. As a consequence  if $C$ is a real algebraic curve,
 then 
 the two small roots of polynomial $R_t(u)$ for $t$ real and small
 enough are either real or complex conjugated depending on the sign of
 $t$. For topological reasons, when real
 the line $L_i$ cannot deform to a real bitangent of $\C_t$ if
 \begin{enumerate}
\item $t<0$ and $q_i$ is above $q_{i,l}$;
\item   $t>0$ and $q_i$ is below $q_{i,l}$.
\end{enumerate}
 This proves the second part of part $(ii)$ of the theorem.
 To get a feeling of what is going on, we depicted $\C_t^\vee$
 on Figure \ref{fig:tgt} in a neighborhood of $[1:0:0]$.
    \begin{figure}[h!]
\centering
\begin{tabular}{c}
  \includegraphics[width=6cm, angle=0]{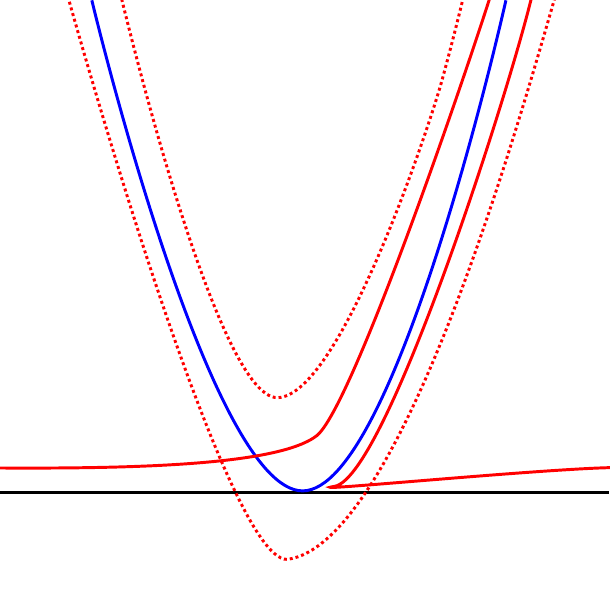}
\end{tabular}
\caption{The dual curve $\C_t^\vee$, in red,
  in a neighborhood of $L_i^\vee$;  the continuous part
is the branch parameterized by $\theta_{p_i,t}^\vee$, while the dotted
parts
are branches parameterized by $\theta_{q_{i,j},t}^\vee$; the blue
part is $\C_0^\vee$.}
\label{fig:tgt}
    \end{figure}

 \bigskip
 To end the proof of the theorem, it remains to check that $\C_t$ has
 no other complex bitangents. 
This is done by counting how many bitangents we already identified:
    \begin{align*}
    \binom{d(d-2)}{2} + 2(\frac{d}{2}-2)d(d-2)&=
 \frac{d(d-2)(d(d-2)-1)}{2}+ (d-4)d(d-2)
      \\ &= \frac12 d(d-2)(d-3)(d+3),
    \end{align*}
which is the total number of bitangents of  $\C_t$.
\end{proof}

\begin{example}
   Let us consider again the 3-section $C_3$ from Example
   \ref{ex:3sec}. Applying Theorem \ref{thm:bit} to $C_3$, one obtains
   two maximal real sextics: the Harnack sextic (Figure \ref{fig:3sec}b) and
   the Hilbert sextic  (Figure \ref{fig:3sec}c).
Thanks to Theorem \ref{thm:bit}, one easily sees that the Harnack
sextic has exactly $316$ real bitangents distributed as follows:
\begin{itemize}
\item 1 real bitangent to  each empty oval (10);
  \item 10 real bitangents to the non-empty oval (10);
 \item  4 real bitangents to each pair of empty oval has (180);
\item 8 real bitangents  to the non-empty oval and the  odd oval (8);
\item 12 real bitangents  to the non-empty oval and each empty even oval (108);
\end{itemize}
and that the Hilbert
  sextic has exactly $284$ real bitangents distributed as follows:
 \begin{itemize}
\item 1 real bitangent to  each empty oval (10);
  \item 10 real bitangents to the non-empty oval (10);
 \item  4 real bitangents to each pair of empty oval has (180);
\item 8 real bitangents  to the non-empty oval and each  odd oval (72);
\item 12 real bitangents  to the non-empty oval and the empty even oval (12).
\end{itemize}
\end{example}

\section{Constructions}\label{sec:constr}

In this section, we apply Theorem \ref{thm:bit} to perform the constructions
announced in Section \ref{sec:intro constr}, and to prove Proposition
\ref{prop:optimality}.

\subsection{Constructions in degree 6}
The proof of Theorem \ref{thm:sextics} relies on the following
cut-and-glue procedure: start with the
 $\mathcal L$-scheme $\mathcal L_0$ depicted in Figure \ref{fig:viro}a, and apply
recursively the  operation described below to construct a
$\mathcal L$-scheme $\mathcal L_{i+1}$ out of $\mathcal L_i$:
\begin{figure}[h!]
\centering
\begin{tabular}{ccccc}
  \includegraphics[width=3cm, angle=0]{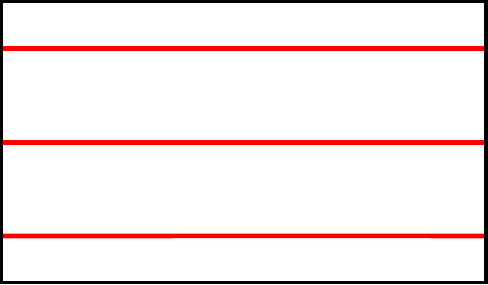}
  &\hspace{5ex}&
 \includegraphics[width=3cm, angle=0]{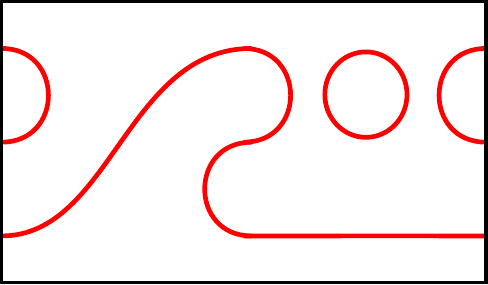}
 &\hspace{5ex}&
 \includegraphics[width=3cm, angle=0]{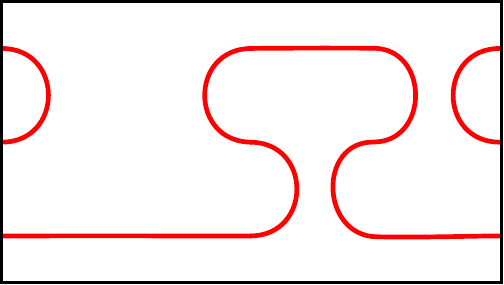}
 \\
  \\a) && b) && c)
\end{tabular}
\caption{Building blocks to construct 3-sections.}
\label{fig:viro}
    \end{figure}
 \begin{enumerate}
 \item choose a vertical line $l$ on $\mathcal L_i$
   intersecting  $\mathcal L_i$ in three distinct points;
 \item cut    $\mathcal L_i$ along the line $l$, and
 flip the left hand side by the orthogonal symmetry with respect to
 the
 abscissa axis;
\item the  $\mathcal L$-scheme $\mathcal L_{i+1}$ is obtained by
  the gluing the left (respectively right) hand side 
  of this cut-and-flipped $\mathcal L_{i}$
to the left
  (respectively right) side of one of the $\mathcal L$-scheme depicted
in Figure  \ref{fig:viro}b, Figure
  \ref{fig:viro}c, or their image under the orthogonal symmetry with respect to
the origin or any coordinate axis.
 \end{enumerate}

 \begin{example}\label{exa:3sec gud}
 We depict an example of such recursive
construction in Figure \ref{fig:gudkov}. At each step, the chosen line $l$ is
the dotted line, and the grey zone corresponds to the
cut-and-flipped $\mathcal L_{i-1}$.
 \begin{figure}[h!]
\centering
\begin{tabular}{ccccc}
 \includegraphics[width=2.3cm, angle=0]{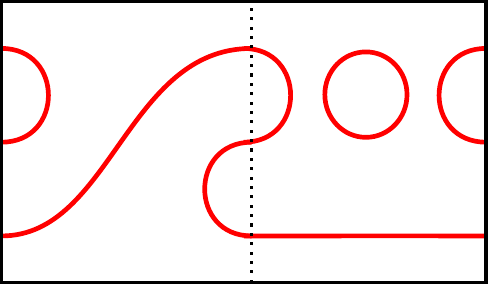}
 & &
 \includegraphics[width=4.6cm, angle=0]{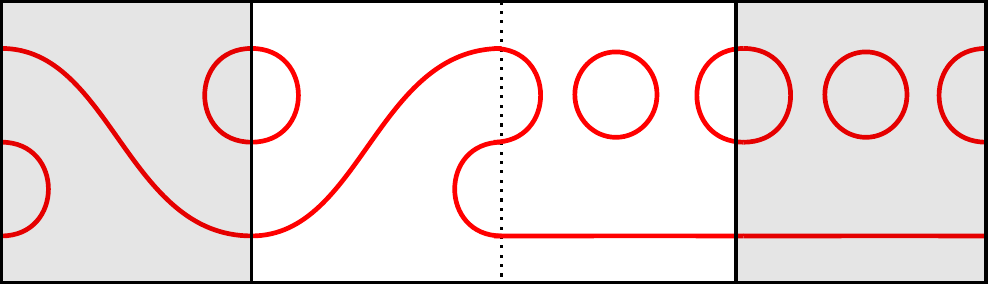}
  & &
 \includegraphics[width=7.2cm, angle=0]{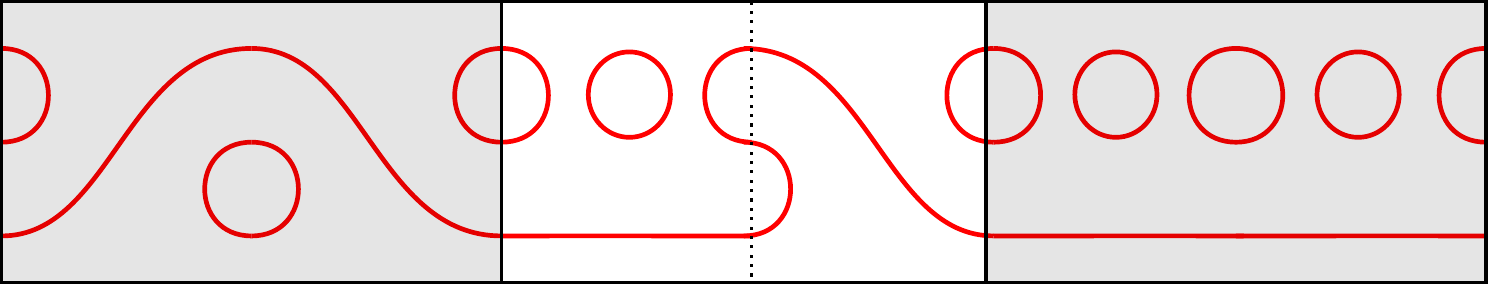}
\\ $\mathcal L_1$ &&$\mathcal L_2$  && $\mathcal L_3$
\\
\end{tabular}
\begin{tabular}{c}
  \\ 
 \includegraphics[width=10cm, angle=0]{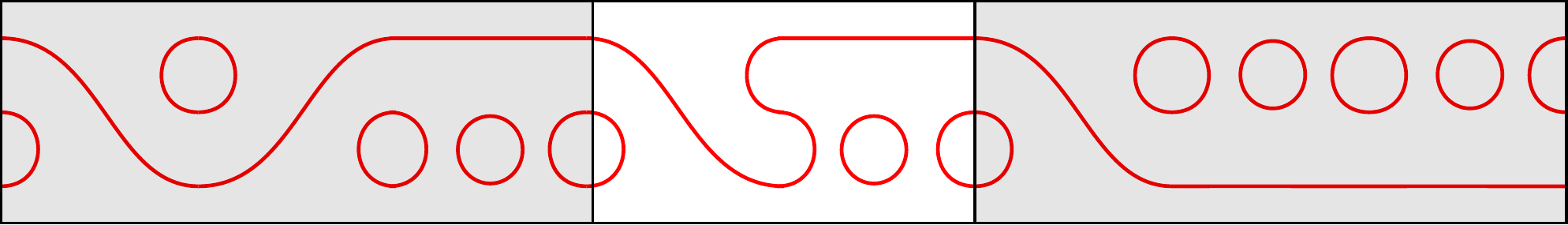}
 
  \\ $\mathcal L_4$
\end{tabular}
\caption{A 3-section deforming to the Gudkov sextic  $\langle 5 \sqcup 1  \langle 5\rangle\rangle$.}
\label{fig:gudkov}
    \end{figure}
\end{example}

The proof of next lemma is an
elementary application 
of Viro Patchworking Theorem (see for example \cite{V1}).
 \begin{lem}\label{lem:viro}
   Any $\mathcal L$-scheme $\mathcal L_{4}$ constructed by the above
   cut-and-glue procedure is realized by a real algebraic 3-section.
\hfill $\square$
 \end{lem}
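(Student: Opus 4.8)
The plan is to realize $\mathcal L_4$ as the $\mathcal L$-scheme of a real polynomial with Newton polygon $\Delta_3$ produced by Viro's Patchworking Theorem, in the incarnation adapted to the vertical pencil of fibers. Concretely, I would use the following statement: if $\Delta_3$ is subdivided by vertical lines $\{i=c_1\},\dots,\{i=c_{m}\}$ into lattice subpolygons $\Delta^{(0)},\dots,\Delta^{(m)}$, and if one is given for each $s$ a real polynomial $P^{(s)}$ with Newton polygon $\Delta^{(s)}$ whose truncation to the edge common to $\Delta^{(s)}$ and $\Delta^{(s+1)}$ agrees with that of $P^{(s+1)}$, then for a suitable choice of coefficients there is a non-singular real polynomial $P$ with Newton polygon $\Delta_3$ whose chart in $R_M$ is the concatenation, along vertical fibers, of the charts of the $P^{(s)}$, each glued to the next after the reflection $v\mapsto -v$ dictated by the patchworking recipe (see \cite{V1}). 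This is exactly Viro's theorem for a regular subdivision of $\Delta_3$ by parallel lines, read off in the chart in $R_M$ rather than in the toric picture.

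Second, I would check that the initial block of Figure \ref{fig:viro}a, the two building blocks of Figures \ref{fig:viro}b and \ref{fig:viro}c, and all of their images under the coordinate symmetries, are each the chart of an explicit real polynomial supported on a suitable lattice subpolygon of $\Delta_3$, meeting each of its two vertical boundary edges in exactly three real branches at prescribed heights. This is elementary: each such block is a curve of very small complexity, realizable either by writing down an equation in the spirit of Examples \ref{ex:2sec} and \ref{ex:3sec}, or by a further primitive patchworking inside the subpolygon. The role of the ``three branches at three prescribed heights'' normalization is precisely that it is preserved by the procedure and makes two consecutive blocks gluable along a vertical fiber.

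Third, I would prove by induction on $i$ that every $\mathcal L_i$ arising from the cut-and-glue procedure is the chart of a non-singular real polynomial whose Newton polygon is a lattice subpolygon of $\Delta_3$ and which meets each of its vertical boundary edges in three real branches. For the inductive step one reads the third step of the procedure as a single patchworking: the vertical line $l$ of the first step, meeting $\mathcal L_i$ in three points, becomes the edge common to the subpolygon carrying (the cut-and-flipped) $\mathcal L_i$ and the subpolygon carrying the chosen building block; the flip of the second step is the reflection $v\mapsto -v$ prescribed by the theorem; and the two truncations to $l$ coincide because both sides present three branches at the same three heights. After the fourth step, choosing the two extreme blocks so that the supports fill out $\Delta_3$, one obtains a non-singular real polynomial with Newton polygon exactly $\Delta_3$, which settles condition (\ref{cond1}) of Definition \ref{def:ksection}, while condition (\ref{cond2}) (no multiple factor in the truncation to the slanted edge $i+4j=12$) is an open condition on the patchworking data, automatically met since that truncation is the product of the corresponding truncations of the two extreme blocks, which we take to have distinct roots. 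Hence there is a real $3$-section with $\mathcal L$-scheme $\mathcal L_4$.

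The step I expect to be the main obstacle is the last layer of this dictionary between picture and combinatorics: verifying that iterating the gluing step only ever yields \emph{regular} subdivisions of $\Delta_3$, so that Viro's theorem genuinely applies, and that at every gluing the boundary data of the two pieces match, including the orientation-reversing effect of the flip. Both facts are forced by the design of the procedure -- the cutting line always meets the curve in three points, and every block carries three matching boundary heights -- but turning this from a picture into a precise argument is where the real work lies.
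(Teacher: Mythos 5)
Your proposal is correct and is essentially the argument the paper has in mind: the paper dispenses with the proof entirely, stating only that the lemma is ``an elementary application of Viro Patchworking Theorem,'' and your write-up (patchworking along a vertical, automatically regular subdivision of $\Delta_3$, with each building block realized as a chart meeting its vertical edges in three real branches at matching heights) is a faithful elaboration of exactly that. One small correction: the truncation to the slanted edge $i+4j=12$ is the product of the truncations of \emph{all} the strips meeting that edge, not just the two extreme blocks, but squarefreeness there is still a generic condition easily arranged.
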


 \begin{remark}
 Lemma \ref{lem:viro} is also an immediate consequence of Orevkov's
 method to construct real trigonal curves via Dessins d'Enfants
 \cite{O3}. However Viro's Patchworking Theorem is constructive,
 whereas Orevkov's
 method, based on Riemann Existence Theorem, is not.
 \end{remark}
 
%%  \begin{example}\label{exa:gud}
%% A 3-section realizing the $\mathcal L$-scheme from Example
%% \ref{exa:3sec gud} deforms to a 
%% Gudkov sextic in $\CP^2$ (i.e. 6 even ovals and 5 odd ovals).
%%  \end{example}

 Recall that the list of real schemes realized by a 
 real algebraic plane
 sextics is given in Figure \ref{fig:rc sextic}.

\begin{coro}\label{cor:max tang 3-sec}
  Let  $\mathcal R\ne \emptyset, h$ be a real scheme realized by a 
 real algebraic plane
 sextic.  Then there exists a $3$-section $C$ such that
 \begin{itemize}
 \item  one of the two sextics obtained by 
   applying Proposition \ref{prop:conic2} to $C$ has real scheme
   $\mathcal R$;
   \item all of the $24$  vertical lines in
     $\CC^2$ that are tangent to $C$ are real.
 \end{itemize}
\end{coro}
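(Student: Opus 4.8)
The plan is to realize $\mathcal R$ by a real $3$-section $C$ drawn with all of its vertical tangencies real, and then to observe that the reality and general-position requirements come essentially for free. Indeed, a generic real $3$-section is a smooth curve with Newton polygon $\Delta_3$, hence of genus $10$, and Riemann--Hurwitz applied to the degree-$3$ projection $\pi|_C\colon C\to\CP^1$ gives $2\cdot 10-2=3(2\cdot 0-2)+R$, i.e. $R=24=d(d-2)$ simple ramification points; so once the $24$ vertical tangencies visible in a real model of $C$ are all real, simple, and with distinct abscissae, there is no room left for further, complex ones.

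\emph{Topological step.} Going through Gudkov's list (Figure~\ref{fig:rc sextic}), I would exhibit, for each admissible $\mathcal R\ne\emptyset,h$, a cut-and-glue sequence $\mathcal L_0\leadsto\mathcal L_1\leadsto\mathcal L_2\leadsto\mathcal L_3\leadsto\mathcal L_4$ of $\mathcal L$-schemes of the type described before Lemma~\ref{lem:viro}, chosen so that one of the two embeddings of $\mathcal L_4$ into $\RP^2$ has real scheme $\mathcal R$. Here block~(b) of Figure~\ref{fig:viro} serves to create an additional unnested empty oval and block~(c) to create an empty oval inside the nest, so that a suitable number of insertions of each controls the number $p$ of unnested ovals (resp. the parameters $a$ and $b$) within the allowed ranges; the congruences modulo $8$ that appear when $a+b\in\{9,10\}$ are satisfied automatically, since the procedure outputs bona fide real algebraic curves and hence only Gudkov-admissible schemes. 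The schemes $\langle p\rangle$ with $1\le p\le 10$ form one family of such sequences and the nested schemes $\langle a\sqcup 1\langle b\rangle\rangle$ another; Example~\ref{exa:3sec gud} already carries this out for $\langle 5\sqcup 1\langle 5\rangle\rangle$. Moreover, each building block and $\mathcal L_0$ can be drawn with all vertical tangent points real and simple, and the cutting is performed along a vertical line meeting the curve in three transverse points, so no tangency is lost; thus $\mathcal L_4$ can be drawn with exactly $24$ simple real vertical tangent points at distinct abscissae.

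\emph{Algebraic step.} By Lemma~\ref{lem:viro} (an application of Viro's patchworking theorem), the $\mathcal L$-scheme $\mathcal L_4$ is realized by a real algebraic $3$-section $C$. Since the $\mathcal L$-scheme records the isotopy type of $\RR C$ together with its position relative to the vertical pencil, $C$ inherits the $24$ simple real vertical tangencies displayed by $\mathcal L_4$; as $24$ is the total number of vertical tangent fibers of any generic $3$-section, $C$ is in general position with respect to the vertical pencil and all $24$ of its tangent fibers are real (a small real perturbation inside the $\mathcal L$-scheme class arranges the critical values to be pairwise distinct if needed). Applying Proposition~\ref{prop:conic2} to $C$ yields two sextics, of respective real schemes $\gamma_{C,-}$ and $\gamma_{C,+}$, one of which is $\mathcal R$ by the choice made in the topological step. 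This establishes both assertions, and provides the hypothesis needed to later apply Theorem~\ref{thm:bit} to the bitangent count.

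\emph{Main obstacle.} The delicate part is the topological step: checking that the limited repertoire of cut-and-glue moves actually reaches \emph{every} Gudkov scheme distinct from $\emptyset$ and $h$ — especially the extremal ones with $a+b\in\{9,10\}$, where realizability is the most constrained — while keeping track of the fact that it is one of the two $\RP^2$-embeddings of $\mathcal L_4$, not necessarily both, that must match $\mathcal R$. This is a finite but somewhat lengthy verification, best organized through a handful of explicit families of diagrams rather than one case at a time.
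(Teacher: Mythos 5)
Your overall strategy coincides with the paper's: realize each real scheme by an $\mathcal L$-scheme produced by the cut-and-glue procedure, invoke Lemma~\ref{lem:viro} (Viro patchworking) to get an actual real $3$-section, and note that since a smooth curve with Newton polygon $\Delta_3$ has genus $10$ and hence exactly $24$ simple vertical tangencies by Riemann--Hurwitz, exhibiting $24$ real ones forces all of them to be real. That framing, and the reality count, are fine.

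The gap is in what you call the ``topological step'', which is in fact the entire content of the corollary: you assert that for every Gudkov scheme $\mathcal R\ne\emptyset,h$ there is a suitable sequence $\mathcal L_0\leadsto\cdots\leadsto\mathcal L_4$, but you do not produce it, and your heuristic (block~(b) adds an unnested oval, block~(c) adds a nested one, so ``a suitable number of insertions'' does the job) is too coarse. The procedure allows only four gluing steps, and the paper's proof shows that the family of $\mathcal L_4$'s obtained from $\mathcal L_0$ alone only reaches $\langle a\rangle$ with $a\le 9$, $\langle a\sqcup 1\langle b\rangle\rangle$ with $a+b\le 5$, and $\langle 5\sqcup 1\langle 1\rangle\rangle$, $\langle 1\sqcup 1\langle 5\rangle\rangle$; to reach the remaining schemes (in particular those with $10$ or $11$ ovals, the schemes $\langle 8-b\sqcup 1\langle b\rangle\rangle$ with $b=1,3$, and $\langle 5\sqcup 1\langle b\rangle\rangle$ with $b=3,4$) the paper has to start from several other explicit diagrams (Figure~\ref{fig:sextic}a--c) and from a modification of the Gudkov construction of Example~\ref{exa:3sec gud}. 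So the claim ``always start from $\mathcal L_0$'' as written would not go through, and the remark that the mod~$8$ congruences are ``satisfied automatically'' addresses only the easy direction (the output is always admissible), not the needed one (every admissible scheme, especially the extremal $a+b\in\{9,10\}$ cases, is attained). To complete the proof you must carry out this finite enumeration with explicit diagrams, as the paper does; deferring it as a ``lengthy verification'' leaves the statement unproved. A minor additional point: you should also justify, as part of that enumeration, that each chosen $\mathcal L_4$ indeed displays all $24$ vertical tangencies as real and simple (this is true for the blocks used, but it is part of the bookkeeping, not automatic).
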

\begin{proof}
  The proof  for real schemes
  $\langle a \sqcup 1  \langle b \rangle\rangle$ with
  $a+b=10$ is provided by Examples
   \ref{ex:3sec} and \ref{exa:3sec gud}.
   Any $3$-section realizing an $\mathcal L$-scheme $\mathcal L_4$
   obtained by the above cut-and-glue procedure has $24$ real tangent
   vertical lines. Hence 
  we are left to realize all the remaining real schemes using Proposition
  \ref{prop:conic2} and Lemma \ref{lem:viro}.
  All possible $\mathcal L$-schemes $\mathcal L_4$
obtained from 
  $\mathcal L_0$ and  four distinct vertical lines in $\mathcal L_0$
 realize the following real schemes:
  \begin{itemize}
  \item $\langle a\rangle$ with $a\le 9$;
  \item  $\langle a \sqcup 1  \langle b \rangle\rangle$ with $a+b\le 5$;
  \item  $\langle 5 \sqcup 1  \langle 1 \rangle\rangle$ and
    $\langle 1 \sqcup 1  \langle 5 \rangle\rangle$.
  \end{itemize}
 All possible $\mathcal L$-schemes $\mathcal L_4$
obtained from  the  $\mathcal L$-scheme
  depicted in Figure \ref{fig:sextic}a
 \begin{figure}[h!]
\centering
\begin{tabular}{ccccc}
  \includegraphics[width=3cm, angle=0]{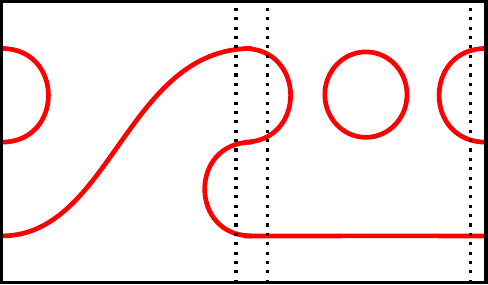}
  &\hspace{5ex}&
 \includegraphics[width=3cm, angle=0]{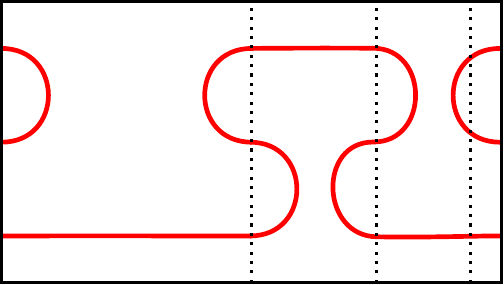}
 &\hspace{5ex}&
 \includegraphics[width=6cm, angle=0]{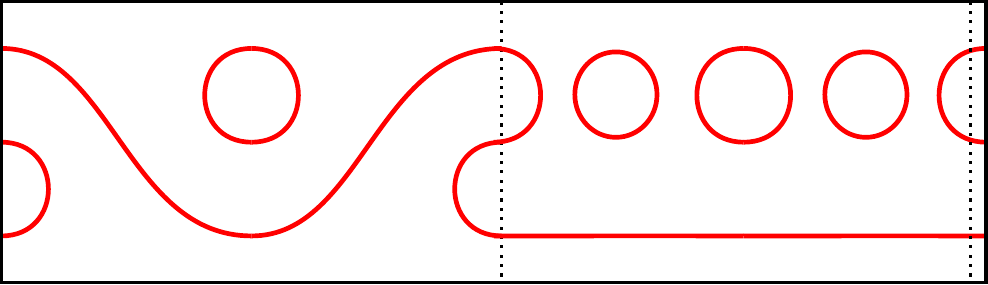}
 \\
  \\a) && b) && c)
\end{tabular}
\caption{Construction of 3-sections.}
\label{fig:sextic}
    \end{figure}
 and  the three dotted vertical lines  realize 
   all the remaining real schemes except the following ones:
  \begin{itemize}
  \item   real schemes with 10 ovals;
  \item  $\langle 8-b \sqcup 1  \langle b \rangle\rangle$ and
    $\langle b \sqcup 1  \langle 8-b \rangle\rangle$ with $b=1,3$.    
  \end{itemize}
One constructs  an $\mathcal L$-scheme $\mathcal L_4$
 from  the  $\mathcal L$-scheme
  depicted in Figure   \ref{fig:sextic}b
  and  the three dotted vertical lines, that realizes
  the  real schemes $\langle 10\rangle$
  and $\langle  1  \langle 9 \rangle\rangle$.
  All possible $\mathcal L$-schemes $\mathcal L_4$
obtained from  the  $\mathcal L$-scheme
depicted in Figure \ref{fig:sextic}c
and  the two dotted vertical lines realize
 the additional real schemes 
  and $\langle 1 \sqcup   1  \langle b \rangle\rangle$ and
  $\langle b \sqcup   1  \langle 1 \rangle\rangle$ with
  $b=7,8$. By an easy modification of the
  construction from Example \ref{exa:3sec gud}, we realize the last
  real schemes $\langle 5 \sqcup   1  \langle b \rangle\rangle$ and
  $\langle b\sqcup   1  \langle 5 \rangle\rangle$ with
  $b=3,4$.  
\end{proof}

\begin{proof}[Proof of Theorem \ref{thm:sextics}]
  This is a consequence of Corollary \ref{cor:max tang 3-sec} and
  Theorem \ref{thm:bit}. Let $\C$ be a sextic obtained by 
  applying Proposition \ref{prop:conic2} to one of the 3-sections $C$
  whose existence is guaranteed by 
  Corollary \ref{cor:max tang 3-sec}. Thanks to  Theorem
  \ref{thm:bit},
 one easily counts the number of  real bitangents
 of $\C$.
 \begin{itemize}
 \item Any of the
   \[
   {{24}\choose{2}}=276
   \]
 real lines $L_{i,j}$  
 deforms  to a real bitangent of $\C$.
   \item Each zig-zag of the non-contractible component of $C$
     corresponds to two lines of type $L_i$.  One of them deforms to 
     two real bitangents of $\CC$, while the
     other one deforms to two complex conjugated
     bitangents.
     The number of such zig-zag is
     \[
     12-(p+n-1).
     \]
  \item    Each oval $\mathcal O$ of $\RR C$  corresponds to two lines of type
    $L_i$, which deform to 2 real (respectively complex
    conjugated) bitangents  of $\C$ if  $\mathcal O$
    deforms to an even (respectively odd) oval of $\RR\CC$.
 \end{itemize}
 Altogether, we see that the curve $\C$ has exactly
 \[
 276+ 2\times(12-(p+n-1)) +4(p-1)= 2(p-n)+298
 \]
 real bitangents.
\end{proof}

\subsection{Asymptotical constructions}
Here we prove  Theorem \ref{thm:asym1}(ii), and
Theorem \ref{thm:asym2}.
Both are consequences of an immediate combination of
Theorem \ref{thm:bit} and the next two propositions.
Recall that we choose a coordinate system $(u,v)$ on $\CC^2$.
\begin{prop}\label{prop:asymconstr1}
  There exists a family $(C_{k})_{k\ge 2}$ of non-singular real
  $k$-sections  in $\CC^2$ whose union with the line $v=0$ realizes the
  $\mathcal L$-scheme
   depicted in Figure \ref{fig:asymfam1}.
       \begin{figure}[h!]
\centering
\begin{tabular}{c}
  \includegraphics[width=15cm, angle=0]{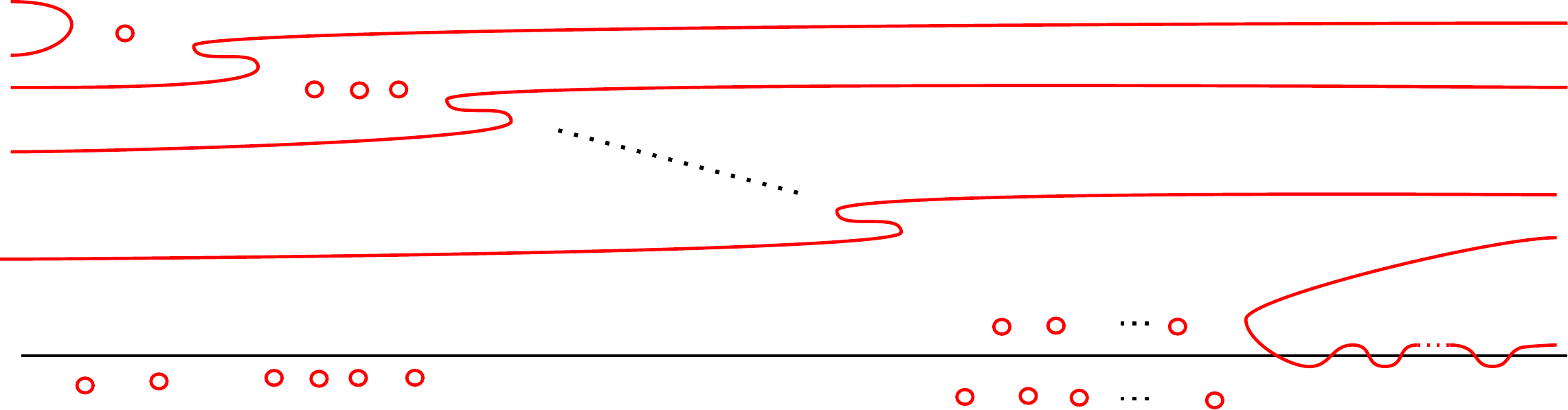}
   \put(-170,-5){$\underbrace{\qquad\qquad\qquad\quad}_{2k-2\mbox{ ovals}}$}
     \put(-80,-5){$\underbrace{\qquad\qquad\qquad}_{\substack{4k\mbox{ intersection}\\ \mbox{points}}}$}
  \put(-160,30){$\overbrace{\qquad\quad\quad\quad\ }^{2k-3\mbox{ ovals}}$}
%  \put(-170,122){$L^2_{j,1,t}$}
 %% &\hspace{5ex}&
 %%  \includegraphics[width=6cm, angle=0]{Figures/3sectionFibresf.pdf}
 %%    \put(-167,145){$L^2_{i,1,t}$}
 %%   \put(-128,145){$L^1_{i,1,t}$}
  \\
  \\ 
\end{tabular}
\caption{The curve $\RR C_k$ (in red) and the line $v=0$ (in black).}
\label{fig:asymfam1}
    \end{figure}
\end{prop}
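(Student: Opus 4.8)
The plan is to construct the curves $C_k$ explicitly by Viro's combinatorial patchworking (see e.g. \cite{V1}), working uniformly in $k$ --- or, equivalently, by an induction whose inductive step is a single patchworking gluing, in the spirit of the cut-and-glue procedure of Section~\ref{sec:constr}. For $k=2$ one simply exhibits a polynomial with Newton triangle $\Delta_2$ realizing the required $\mathcal L$-scheme; the curve of Example~\ref{ex:2sec}, or a small variant of it, does the job. For general $k$ I would fix a \emph{primitive} regular triangulation $\tau_k$ of $\Delta_k$ together with a Harnack-type sign distribution $\sigma_k$ on $\Delta_k\cap\ZZ^2$ (in the sense of \cite{IV2}), chosen so that the associated $T$-curve has the combinatorial type drawn in Figure~\ref{fig:asymfam1}. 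Since $\Delta_k\subset\Delta_{k+1}$ and $\Delta_{k+1}\setminus\Delta_k$ is the thin region $\{4k<i+4j\le 4k+4,\ i,j\ge 0\}$ adjacent to the slanted edge of $\Delta_{k+1}$, one can choose a convexity function making $\tau_{k+1}=\tau_k\cup(\text{standard staircase triangulation of the strip})$ again regular and primitive, and extend $\sigma_k$ across the strip by the alternating Harnack pattern; Viro's theorem then yields a real polynomial $P_{k+1}(u,v)$ with $\Delta(P_{k+1})=\Delta_{k+1}$ whose real locus has the prescribed type.

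Once such a pair $(\tau_k,\sigma_k)$ is in hand, the properties required of $C_k$ are checked as follows. Non-singularity in $\CC^2$ is automatic: as $\tau_k$ is primitive, the patchworked curve is smooth in the whole toric surface $X_{\Delta_k}$, hence in $\CC^2$. Condition~(\ref{cond2}) of Definition~\ref{def:ksection}, that the edge polynomial $\sum_{i+4j=4k}a_{i,j}u^iv^j$ along the slanted edge of $\Delta_k$ has no multiple factors, follows because $\tau_k$ is primitive along that edge, so the induced $1$-dimensional patchwork has only simple roots and the edge polynomial has $k$ distinct roots; if desired this can also be arranged afterwards by a small perturbation of the free coefficients. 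That the projection $(u,v)\mapsto u$ has degree $k$ is immediate from $\Delta(P_k)=\Delta_k$. Finally, the restriction $P_k(u,0)=\sum_i a_{i,0}u^i$ has Newton segment $[0,4k]$, and with the Harnack-type signs along the bottom edge all $4k$ of its roots are real and simple; these are exactly the $4k$ transverse intersection points of $\RR C_k$ with the line $v=0$ appearing in Figure~\ref{fig:asymfam1}, and one checks similarly that all vertical tangencies of $\RR C_k$ are real.

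The real content is the combinatorial identification of the $\mathcal L$-scheme: one must produce a triangulation together with a sign distribution whose Viro picture in the chart $\{v\ne 0\}$ is, after the standard homeomorphism onto $R_M$ and the vertical-edge identification of Section~\ref{sec:oldconstr}, precisely the configuration of Figure~\ref{fig:asymfam1} --- the long component meeting $v=0$ transversally in $4k$ points in the right zig-zag pattern, with $2k-2$ ovals on one side of $v=0$ and $2k-3$ on the other. I would track, along the induction, that the strip $\Delta_{k+1}\setminus\Delta_k$ contributes exactly the expected new ovals and new crossings, and that they are inserted so as to preserve the global isotopy type of the $\mathcal L$-scheme. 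This bookkeeping is elementary but is where all the work lies, and I expect it to be the only genuine obstacle, the algebro-geometric input being routine Viro patchworking. (Alternatively one may invoke the existence of simple Harnack curves with arbitrary Newton polygon \cite{Mik11,KenOko06,Bru14b}, specialise to $\Delta_k$, and then only read off the $\mathcal L$-scheme and verify condition~(\ref{cond2}); the patchworking route is however constructive and keeps better control of the picture.)
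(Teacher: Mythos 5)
Your proposal is correct in spirit but follows a genuinely different route from the paper. The paper's proof is Harnack's classical small-perturbation method, made completely explicit by the recursion $P_{k+1}(u,v)=vP_k(u,v)-\varepsilon_k\prod_i(u-u_{k,i})$ with $0<\varepsilon_k\ll\varepsilon_{k-1}\ll\cdots\ll 1$ and with the real roots $u_{k,i}$ placed to the right of all previously used ones: at each step $C_{k+1}$ is a perturbation of the visibly drawn reducible curve $\{v=0\}\cup C_k$, and the smoothing of each of its real nodes is dictated by the sign of $\varepsilon_k$ and of $P_k$ near that node, so the $\mathcal L$-scheme of $C_{k+1}\cup\{v=0\}$ (including the position of all intersection points and vertical tangencies) is read off immediately from the inductive picture (Figure \ref{fig:1sec}). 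You instead propose combinatorial patchworking with a primitive regular triangulation of $\Delta_k$ and a Harnack sign distribution, extended across the strip $\Delta_{k+1}\setminus\Delta_k$. This is a legitimate alternative --- the paper itself lists it, alongside Harnack's method, as a way to build the $3$-section of Example \ref{ex:3sec} --- and it buys smoothness and the non-degeneracy of the edge polynomial for free from primitivity of the triangulation. However, you correctly identify that the entire content then sits in the combinatorial identification of the $\mathcal L$-scheme, and you do not exhibit the triangulation, the sign distribution, or the bookkeeping showing that the $T$-curve has the announced number and arrangement of ovals, the $4k$ real transverse intersections with $v=0$, and only real vertical tangencies; this last point in particular is not automatic for an arbitrary patchworked curve and needs the Harnack pattern. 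So as written your argument is a credible plan rather than a complete proof, whereas the paper's recursion makes precisely this verification trivial --- which is the main advantage of its approach.
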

\begin{proof}
  The construction, by induction, is elementary and
  follows Harnack's method \cite{Har}.
  Let
  \[
  u_{1,1}<u_{1,2}<u_{1,3}<u_{1,4}<u_{2,1}<\cdots
  <u_{2,8}<u_{3,1}<\cdots<u_{3,12}<\cdots
  <u_{k,1}<\cdots<u_{k,4k}<\cdots
  \]
  be a sequence of real numbers, and let $(P_0)_{k\ge 1}$ be the
  sequence of polynomials defined by  induction as follows:
  \[
  P_0(u,v)=1
  \qquad\mbox{and}\qquad
   P_{k+1}(u,v)=vP_{k}(u,v)- \varepsilon_k \prod_{i=1}^{4k+4}(u-u_{k,i}).
   \]
   The curve $C_k$ whose existence is attested by the proposition is
   defined by the polynomial $P_k(u,v)$ for
   \[
   0<\varepsilon_k\ll \epsilon_{k-1}\ll\ldots\ll \varepsilon_1\ll 1.
   \]
   We show in Figure 
   \ref{fig:1sec} the construction for the steps $k=1$ and $k=2$.
\begin{figure}[h!]
\centering
\begin{tabular}{ccc}
  \includegraphics[width=6cm, angle=0]{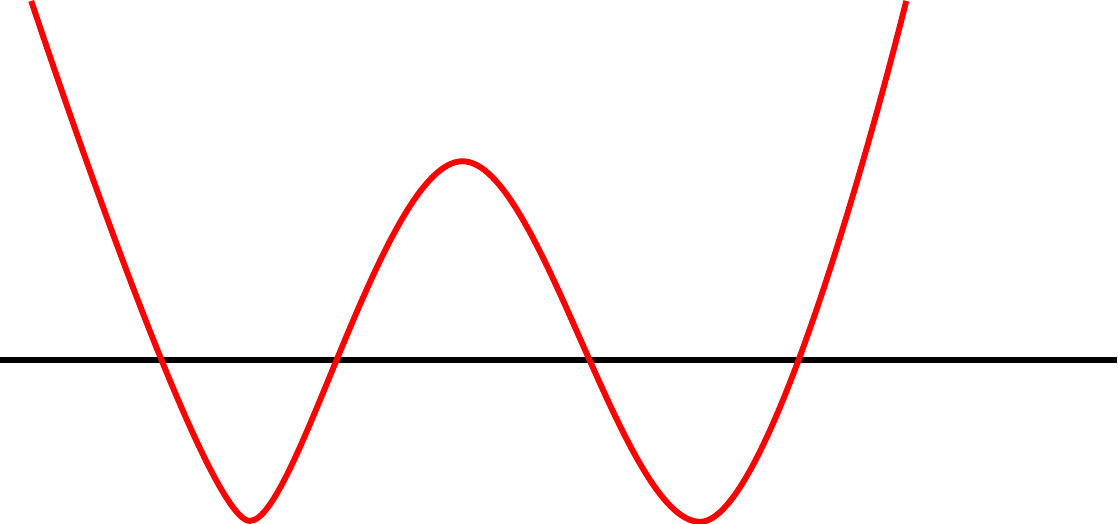}
  &\hspace{5ex}&
   \includegraphics[width=6cm, angle=0]{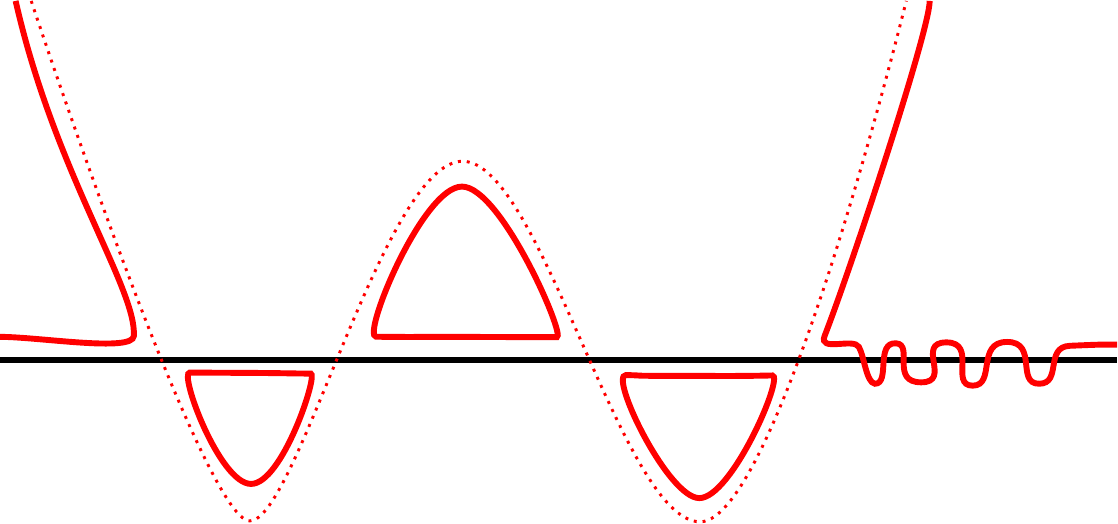}
  \\
  \\ $\RR C_1$ && $\RR C_2$
\end{tabular}
\caption{The curves  $\RR C_1$ and $\RR C_2$ from the proof of
  Proposition \ref{prop:asymconstr1}.}
\label{fig:1sec}
    \end{figure}
\end{proof}
\begin{remark}
  The $3$-section $C_3$ is used in Example \ref{ex:3sec}.  
\end{remark}

\begin{proof}[Proof of Theorem \ref{thm:asym1}(ii)]
  Denote by $\C_{k}$ the real algebraic cuvre of degree $2k$ in $\RP^2$ obtained by
  applying Proposition \ref{prop:conic2}, with $t<0$, to the $k$-section $C_k$ from
  Proposition \ref{prop:asymconstr1}. It is a Harnack curve whose real
  part in $\RP^2$, up to isotopy, is depicted in Figure
  \ref{fig:harnack}.
  Thanks to Theorem \ref{thm:bit}, the exact numbers of
  real and non-real bitangents of $\C_k$ are easy to compute.
  Since all  vertical tangent lines to
   $C_k$ are real,
  pairs of complex conjugated
  bitangents to $\C_k$ correspond to  pairs $(p,q)\in \RR\C_k^2$ where
  \begin{enumerate}
  \item $p$ is a tangency point of $C_k$ with a vertical line;
    \item $q$ has the same $u$-coordinate than $p$, but a lower $v$-coordinate.
  \end{enumerate}
  Hence the number of such pairs of complex conjugated
  bitangents is exactly
  \begin{align*}
    \sum_{i=1}^{k-2}4i(k-1-i)&=
  %  4(k-1)\sum_{i=1}^{k-2}i +(k-1)(k-2) -2\sum_{i=1}^{k-2}i^2
  %  \\&=
    2(k-1)^2(k-2)  -\frac{4(k-1)(k-2)(2k-3)}6
    \\ &= \frac{2k(k-1)(k-2)}3.
  \end{align*}
  Multiplying by 2, we obtain the announced number of non real
  bitangents of $\C_k$.
\end{proof}

\begin{propo}\label{prop:asymconstr2}
  There exists a family $(C_{k})_{k\ge 1}$ of non-singular real
  $k$-sections in $\CC^2$ such that 
  \begin{itemize}
  \item the closure $\overline{\mathbb{R} C_{k}}$ of $\mathbb{R} C_{k}$ in
    $\mathcal O_{\CP^1}(4)$ is connected for any $k\ge 1$;
    
    \item the number $r_k$ of real vertical lines tangent to $C_k$  satisfies
      \[
      r_k=k^2+O(k).
      \]
  \end{itemize}
\end{propo}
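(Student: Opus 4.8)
The plan is to construct $C_k$ by induction on $k$, adding one ``branch'' at each step in the spirit of Harnack's method (as in the proof of Proposition \ref{prop:asymconstr1}), but choosing the perturbation data so that the real part stays connected while a constant proportion of the vertical tangencies remain real. For $k=1$ one takes $C_1$ to be the graph of a generic real polynomial of degree $4$: its closure in $\mathcal O_{\CP^1}(4)$ is a single section, hence a connected circle, and a graph has no vertical tangent line, so $r_1=0$, consistent with $k^2+O(k)$.

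For the inductive step, assume $C_k$ is a non-singular real $k$-section, in general position with respect to the vertical pencil, with $\overline{\RR C_k}$ connected. First I would choose a real polynomial $q$ of degree $\le 4$ such that the graph $L=\{v=q(u)\}$ meets $C_k$ transversally; the intersection of their closures has $4k$ points, and since $\overline{\RR C_k}$ is a wiggly but compact circle (it already has of order $k^2$ vertical tangencies by induction), a nearly horizontal $L$ can be slid through $\RR C_k$ so that all $4k$ intersection points become real. I would then smooth the reducible curve $C_k\cup L$ by a one-parameter Viro-type perturbation $P_{C_k}P_L+tR=0$ with $R$ in the linear system of degree-$4(k+1)$ forms, so that the resulting curve is a genuine $(k+1)$-section. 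As recalled in Section \ref{sec:conic}, this resolves each node $\{xy=0\}$ of $C_k\cup L$ into $\{xy=\epsilon\}$, and, because $R$ ranges over a large linear system, one can prescribe the sign of $\epsilon$ at each node independently; a $R$ generic among those compatible with the chosen signs also makes the perturbed curve non-singular and in general position with respect to the vertical pencil. In the branch coordinates $x,y$ at a node, the direction of the vertical fibre is $(1,1)$, so $\{xy=\epsilon\}$ carries a real vertical tangency near that node precisely when $\epsilon<0$, and then exactly two of them. Thus one resolution of each node -- the ``cap'', which joins the two local branches on the same side of the vertical fibre -- produces two real vertical tangent lines, while the other -- the ``pass-through'' -- produces none.

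It remains to choose the resolutions. Ordering the $4k$ nodes along $L$, I would take the ``cap'' resolution on a suitably spread-out subset $A$ and the ``pass-through'' resolution elsewhere, arranged so that: no arc of $\RR(C_k\cup L)$ bounded by two consecutive elements of $A$ is capped off at both ends (which would create a spurious oval), and the pass-through resolutions splice $L$ into the unique component of $\RR C_k$. One checks that $A$ can be taken of size of order $k$, so that the step adds $2|A|=O(k)$ real vertical tangent lines while $\overline{\RR C_{k+1}}$ stays connected. Since a generic $(k+1)$-section has $2(k+1)\cdot 2k$ vertical tangent lines in all (by Riemann--Hurwitz, equivalently by the count of nodes of the dual curve used in the proof of Theorem \ref{thm:bit}), summing the increments over the $k$ steps gives $r_k=k^2+O(k)$. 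The step I expect to be the main obstacle is exactly this last one: controlling the global reconnection pattern of the $4k$ branches so that the prescribed local resolutions produce a single circle with the asserted number of real vertical tangencies. I would organise the bookkeeping by encoding the resolution choices as a chord diagram on the circle $\RR C_k$ and reading off the number of components combinatorially, verifying along the way that the ``cap'' nodes neither disconnect $\RR C_{k+1}$ nor bound new ovals.
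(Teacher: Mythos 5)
Your overall strategy is close in spirit to the paper's: both arguments perturb a union of $1$-sections via a Brusotti-type smoothing whose direction can be prescribed independently at each node, and your local computation (each node contributes either $2$ real vertical tangencies or a complex-conjugate pair, according to the sign of the smoothing) is correct and is exactly the mechanism used in the paper. But the decisive step of your argument is missing. Everything hinges on the claim that at each inductive stage one can choose a subset $A$ of $\Omega(k)$ of the $4k$ new nodes to receive the ``cap'' smoothing while the resulting curve stays connected; you assert ``one checks that $A$ can be taken of size of order $k$'' and yourself identify this as the main obstacle, but you never exhibit such a choice or prove it exists. This is not a routine verification: whether a given cap preserves connectivity depends on the interleaving of the $4k$ crossings along the two circles $\RR C_k$ and $\RR L$, i.e.\ on the global geometry of the arrangement, and a greedy choice of caps can easily be forced into splitting off ovals. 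The paper resolves exactly this tension not by an inductive selection but by fixing, once and for all, a specific arrangement of $k$ graphs (Figure~\ref{fig:red}) for which capping \emph{every} node keeps the real part connected when $k$ is odd — achieving the maximal value $r_k=4k(k-1)$ — and then treats even $k$ by adjoining one further $1$-section without demanding any new real tangencies from it. Without an explicit configuration (or a proved combinatorial lemma about chord diagrams), your induction does not go through.

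Two secondary points. First, your justification that $L$ can be chosen to meet $\RR C_k$ in $4k$ real points is not valid as stated: the number of real intersections of a nearly horizontal section with $\RR C_k$ is governed by how many real branches $\RR C_k$ has over each base point (equivalently by its oscillation in the fibre direction), not by its number of \emph{vertical} tangencies; to make this work you must strengthen the inductive hypothesis, e.g.\ by keeping $C_k$ a small perturbation of $k$ graphs sweeping a common range of heights. Second, be aware that the bound you are aiming for, $r_k=k^2+O(k)$, is weaker than what the paper's construction delivers ($4k^2+O(k)$, the maximum), and it is the stronger bound that is actually needed to deduce Theorem~\ref{thm:asym2} via $\binom{r_k}{2}=t_\CC(d)+O(d^3)$; so even a successful execution of your plan with $|A|\sim k$ caps per step would prove the proposition as literally stated but not suffice for its intended application.
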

\begin{proof}
  The proof uses again perturbations of  suitable reducible
  nodal curves, and  consists in two steps: first we treat the case of odd
  $k$'s, for which we construct a curve $\C_k$ with $r_k=4k(k-1)$
  (which is the maximal possible value);
  next we construct the curve $\C_k$ with $k$ even out
  of $\C_{k-1}$.

  \noindent {\bf Step 1: $k$ odd.} Consider the union $C_{k,red}$ of
  $k$ distinct 1-sections,
  with $\mathcal   L$-scheme
  depicted in Figure \ref{fig:red}.
       \begin{figure}[h!]
\centering
\begin{tabular}{c}
  \includegraphics[width=15cm, angle=0]{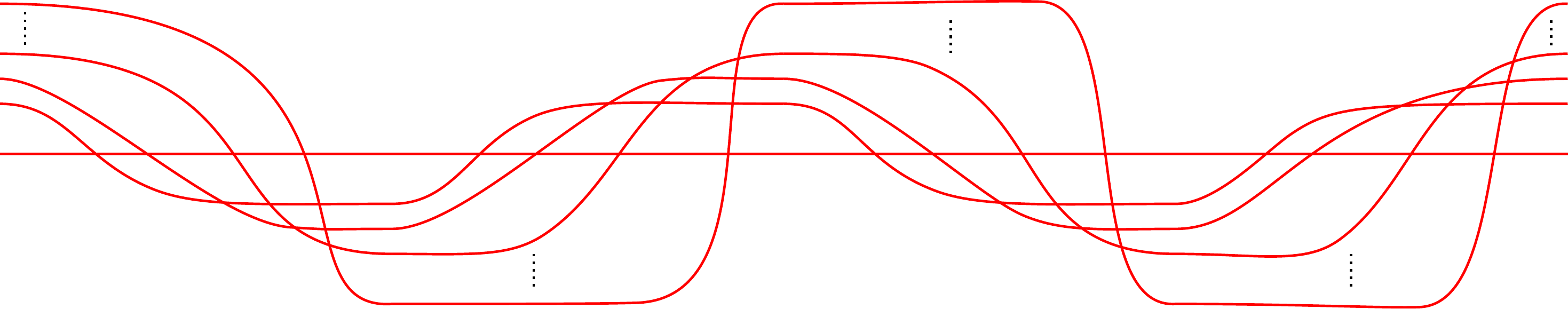}
  \\
  \\ 
\end{tabular}
\caption{A union of 1-sections.}
\label{fig:red}
    \end{figure}
       By Brusotti's Theorem \cite{DK,Mang17},
       there exists a non-singular $k$-section 
  $C_k$ obtained by perturbing $C_{k,red}$ in such a
  way that each double point  gives rise to two real  tangencies with
  a vertical line. One sees easily, for example by induction on
      $k$, that  $\overline{\RR \C_k}$
     is connected.

     \noindent {\bf Step 2: $k$ even.}
     Suppose that $k\ge 2$ is even, and consider the curve $C_{k-1}$
     constructed in Step 1. Choose any 1-section $\widetilde C_1$
     intersecting transversally 
  $C_{k-1}$ and such that
     $\RR C_{k-1}\cap \RR \widetilde C_1\ne \emptyset$. By Brusotti's
     Theorem again, one can perturb the reducible curve
     $ C_{k-1}\cup\widetilde C_1$ into a non-singular $k$-section
     $C_k$ such
     that  $\overline{\RR \C_k}$
     is connected. Such perturbation can be obtained by choosing the
     perturbation of the real nodes of $C_{k-1}\cup\widetilde C_1$
     one after the other, ensuring each time
     that the real part remains connected.
     By construction, the number of real vertical tangent to  $C_k$
     is at least the number of such real vertical tangents to the
      curve $C_{k-1}$.
\end{proof}

\begin{proof}[Proof of Theorem \ref{thm:asym2}]
     Denote by $\C_{k}$ the real algebraic curve of degree $2k$ in
   $\RP^2$ obtained by 
  applying Proposition \ref{prop:conic2}  to the $k$-section $C_k$ from
  Proposition \ref{prop:asymconstr2} (the sign of $t$ doesn't
  matter here). By Theorem \ref{thm:bit}, the curve $\C_k$ has at
  least
  ${r_k}\choose{2}$ real bitangents, which proves the theorem.
\end{proof}

\subsection{On the optimality of $t_s$}
The proof of Proposition \ref{prop:optimality} follows easily from
Theorem \ref{thm:bit} and next lemma.

\begin{lem}\label{lem:harn 4k+2}
  For any integer $k\ge 1$, there exists a $k$-section with the
  $\mathcal L$-scheme depicted in Figure \ref{fig:optimality}a or b
  depending on whether $k$ is odd or even, respectively.
 \begin{figure}[h!]
\centering
\begin{tabular}{ccc}
  \includegraphics[height=0.85cm, angle=0]{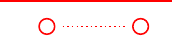}
   \put(-92,-5){$\smash{\underbrace{\qquad\qquad\qquad\quad
      }_{a\mbox{ ovals}}}$}
 &\hspace{5ex}&
 \includegraphics[height=0.85cm, angle=0]{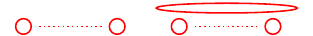}
   \put(-95,-5){$\smash{\underbrace{\qquad\qquad\qquad\quad
      }_{  b\mbox{ ovals}}}$}
   \put(-205,-5){$\smash{\underbrace{\qquad\qquad\qquad\quad
      }_{  a-b\mbox{ ovals}}}$}
 \\ \\
  \\a) $k$ odd && b) $k$ even, $b\le \frac{k(k-2)}{2}$
\end{tabular}
\caption{$0\le a\le \frac{3k(k-1)}{2}$.}
\label{fig:optimality}
    \end{figure}
\end{lem}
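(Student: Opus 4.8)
The plan is to construct these $k$-sections by induction on $k$, following Harnack's perturbation method exactly as in the proof of Proposition~\ref{prop:asymconstr1}, but now carrying along two extra parameters: the number $a$ of ovals and, when $k$ is even, the way they split into the two groups of sizes $a-b$ and $b$ of Figure~\ref{fig:optimality}b. The base case $k=1$ is immediate: a $1$-section is the graph $\{v=Q(u)\}$ of a real polynomial $Q$ of degree $4$ with $\Delta(Q)=\Delta_1$, and taking $Q$ with four distinct simple real critical points realizes Figure~\ref{fig:optimality}a for the unique admissible value $a=0$.

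For the inductive step, assume the statement for $k$, pick $P_k$ realizing the scheme for some admissible value of the parameters, and set
\[
P_{k+1}(u,v)=v\,P_k(u,v)-\varepsilon\,R(u),
\]
where $R$ is a real univariate polynomial of degree $4(k+1)$ and $\varepsilon$ is a small real number. One checks that the Newton polygon of $P_{k+1}$ is $\Delta_{k+1}$ and that, for generic small $\varepsilon$, conditions (2) and (3) of Definition~\ref{def:ksection} hold, so $C_{k+1}=\{P_{k+1}=0\}$ is a $(k+1)$-section. For $\varepsilon=0$ it degenerates to $C_k\cup\{v=0\}$, and near the $4k$ points of $C_k\cap\{v=0\}$ the topology of $C_{k+1}$ is governed, as in Harnack's construction, by the sign of $\varepsilon$ and by the positions of the roots of $R$ relative to those $4k$ points. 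Two observations close the induction. First, letting some of the $4(k+1)$ roots of $R$ be non-real (hence in complex conjugate pairs) one can make the perturbation create any prescribed number of new ovals between $0$ and $3k$, the maximum $3k$ being attained with all roots real and suitably interlaced; since $\sum_{m=1}^{k-1}3m=\tfrac{3k(k-1)}{2}$, deciding step by step how many ovals to produce realizes every admissible value of $a$. Second, these new ovals can be produced on either side of $\{v=0\}$, and whether $\gamma_{C_{k+1}}$ has a component winding once around the annulus $N$ is dictated solely by the parity of $k+1$ (the line $\{v=0\}$ being itself a $1$-section, hence winding once around $N$); this accounts for the two shapes in Figure~\ref{fig:optimality} and, when $k+1$ is even, for the parameter $b$ and its bound as recorded in the statement.

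The step I expect to be the main obstacle is not the existence of the perturbation — this is standard and can be made rigorous either by the implicit function theorem near the nodes of $C_k\cup\{v=0\}$, as in \cite{Har}, or by Viro's Patchworking Theorem, as in Lemma~\ref{lem:viro} — but rather the bookkeeping needed to check that throughout the induction the new ovals organize into the single chain (resp.\ the two chains) of Figure~\ref{fig:optimality}a (resp.\ \ref{fig:optimality}b), without parasitically nesting inside one another or inside the ovals inherited from $C_k$. I would handle this by isolating once and for all the effect on the $\mathcal L$-scheme of one perturbation step $v\,P-\varepsilon\,R(u)$ as a function of the number of real roots of $R$, and then running the induction on that local statement, tracking the real picture near $\{v=0\}$ step by step exactly as in Figure~\ref{fig:1sec}.
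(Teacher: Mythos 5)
Your route is genuinely different from the paper's: you run Harnack's inductive perturbation $P_{k+1}=vP_k-\varepsilon R(u)$ and try to tune the number and position of the new ovals at each step by letting roots of $R$ become non-real. The paper instead degenerates in one shot to the rational simple Harnack curve $\theta\mapsto(\theta^k,(\theta-1)^{4k})$, quotes from \cite{Bru14b} the exact count and location of its $(2k-1)(k-1)$ real hyperbolic nodes (namely $\frac{3k(k-1)}{2}$ of them in $\overline{\{P>0\}}$, of which exactly $\frac{k(k-2)}{2}$ have positive abscissa when $k$ is even, and $\frac{(k-1)(k-2)}{2}$ in $\overline{\{P<0\}}$), and then invokes Brusotti's theorem to smooth each node \emph{independently} into either an oval or the empty set. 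That independence is precisely what lets $a$ and $b$ sweep the full stated ranges with no bookkeeping.

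The gap in your proposal is exactly the step you flag but do not carry out, and it is not a formality. First, at step $m\to m+1$ the full Harnack perturbation creates $4m-1$ new components, of which $3m$ lie on the side that must form the chain of Figure \ref{fig:optimality} and $m-1$ lie on the other side of the degenerated line; the latter, once created, persist through all subsequent steps, so you must show that the roots of $R$ can be chosen so that \emph{none} of these $m-1$ ovals appears while \emph{any} prescribed number in $\{0,\dots,3m\}$ of the former does, with the survivors aligned in a single unnested chain. Making a pair of roots of $R$ complex conjugate does not simply delete one oval: depending on the local sign configuration of $R/P_m(\cdot,0)$ it can merge two ovals or absorb an oval into the long branch, so this requires a case analysis you have not supplied. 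Second, your account of the parameter $b$ ("new ovals can be produced on either side of $\{v=0\}$") does not yield the bound $b\le\frac{k(k-2)}{2}$: the above/below dichotomy is the one separating the $\frac{3k(k-1)}{2}$ admissible ovals from the $\frac{(k-1)(k-2)}{2}$ forbidden ones, whereas the split $a-b$ versus $b$ in Figure \ref{fig:optimality}b corresponds in the paper to the sign of the abscissa of the corresponding nodes, a count your induction gives no access to. To salvage the inductive approach you would need to state and prove the one-step lemma you allude to, including these two points; as written the argument is incomplete.
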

\begin{proof}
 Consider the rational  curve $C_{k,rat}$ with parameterization
       \[
       \begin{array}{cccc}
      \phi_k: &  \CC&\longrightarrow & \CC^2
         \\ &\theta &\longmapsto & (\theta^k,(\theta-1)^{4k})
       \end{array}.
       \]
       Let $P(X,Y)\in\RR[X,Y]$ be a real polynomial defining 
       $C_{k,rat}$. Without loss of generality, we may assume that
       $P(x,y)>0$ when $y$ is negative and large  enough in absolute
       value with respect to $x$.
       By\footnote{The parameterization of $C_{k,rat}$  is a rather
         simple  case of the general situation covered in
         \cite{Bru14b}, and
         one may
         prove directly  by elementary considerations
     the properties of  $C_{k,rat}$ that are needed here.}
       \cite[Theorem 10]{Bru14b}, the   curve $C_{k,rat}$ is a rational Harnack curve,
       and  has the following properties:
       \begin{itemize}
         \item  $C_{k,rat}$ has  $(2k-1)(k-1)$ nodes as its only singularities, which
           are all hyperbolic real nodes (i.e. image of non-real
           points of $\CC$);
       \item $\RR C_{k,rat}$ has $\frac{(k-1)(k-2)}{2}$ real nodes  located in the
         closure in $\RR^2$ of 
         $\{(x,y)\in\RR^2\ | \ P(x,y)<0\}$, and
          $\frac{3k(k-1)}{2}$ real nodes located in the
        closure of
        $\{(x,y)\in\RR^2\ | \ P(x,y)>0\}$; exactly
        $\frac{k(k-2)}2$ of these latter  have a positive
        $x$-coordinate if $k$ is even. 
       \end{itemize}
       One  checks easily that
       distinct  nodes of  $C_{k,rat}$ have distinct abscissa.
       Let 
       \[
       \begin{array}{cccc}
      \widetilde \phi_k: &  \CC\setminus R_2^{-1}(0)&\longrightarrow & \CC^2
         \\ &\theta &\longmapsto & (\frac{R_1(\theta)}{R_2(\theta)},\frac{(\theta-1)^{4k}}{R_2^4(\theta)})
       \end{array},
       \]
       be a perturbation of the map $\phi_k$ 
       with $R_1(\theta)$ and  $R_2(\theta)$ two  small perturbations
       of degree $k$ of the polynomials  $\theta^k$ and $1$, respectively, with only
       simple roots and $\floor{\frac k2}$ non-real roots.
       Then the real algebraic curve $\widetilde C_{k,rat}$ has the
       same above properties than the curve $C_{k,rat}$, and the
       additional property that
       the $1$-dimensional part of
       $\RR \widetilde C_{k,rat}$ has 0 or 2  vertical tangent lines
       depending on whether $k$
       is odd or even, respectively.

       Let $a\le \frac{3k(k-1)}2$.
By Brusotti's Theorem, 
there exists a non-singular real
$k$-section obtained as a perturbation of  $ \widetilde C_{k,rat}$
and such that
\begin{itemize}
\item exactly $a$ nodes of $\RR \widetilde C_{k,rat}$ located in the
        closure of
        $\{(x,y)\in\RR^2\ | \ P(x,y)>0\}$ deforms to an oval;
        \item all the other nodes of  $\RR \widetilde C_{k,rat}$ deforms to the
        empty set.
\end{itemize}
Hence the lemma is proved.
\end{proof}

%% \begin{remark}
%%   In the case $a=\frac{3k(k-1)}{2}$, one can give 
%%        Let $\widetilde P(X,Y)$ be the perturbation of $P(X,Y)$ in
%%        $\RR[X,Y]$
%%        that defines the curve $\widetilde \phi_k(\CC)$.  Then
%%        the real algebraic curves in $\CC^2$ with equation
%%        \[
%%        \widetilde P(X,Y)-\varepsilon =0
%%        \]
%% with $\varepsilon$ a small enough positive real number,
%%        has the $\mathcal L$-scheme  depicted in Figure
%%        \ref{fig:optimality}
%%        with $a=\frac{3k(k-1)}{2}$.
%% \end{remark}

\begin{proof}[Proof of Proposition \ref{prop:optimality}]
  It follows from Proposition \ref{prop:conic2} and Theorem
  \ref{thm:bit} that the real algebraic curves of degree $2k$
  constructed from the $k$-sections from Lemma \ref{lem:harn 4k+2},
  and $t>0$,
  has no negative real bitangents.
\end{proof}

\begin{remark}
  As noticed by Ilia Itenberg,
  one can prove Proposition \ref{prop:optimality}$(i)$
 without using Proposition
  \ref{prop:conic2} and Theorem 
  \ref{thm:bit}. Itenberg's proof has the same flavor than our, and we
  leave to  the reader  a few details to check.
  By \cite{KenOko06}, the rational  curve  in $\CP^2$ with parameterization
       \[
       \begin{array}{ccc}
        \CC P^1&\longrightarrow & \CC P^2
         \\ \left[\theta:\kappa\right] & \longmapsto &
            \left[\theta^{2k}:(\theta-\kappa)^{2k}: \kappa^{2k}\right]
       \end{array}
       \]
       is isotopic to the Figure \ref{fig:harnack} where all the empty
       ovals are contracted to points.
       Perturb the three  polynomials $\theta^{2k}$,
       $(\theta-\kappa)^{2k}$, and $\kappa^{2k}$ into three real
       polynomials in $\CC[\theta,\kappa]$
       of degree $2k$ with no real roots, and perturb the obtained
       rational curve into a non-singular real algebraic
       curve having exactly $p$
       empty ovals which are all convex.
       It is a well-known fact that
       two disjoint convex ovals in $\RP^2$ lying one outside the
       other have exactly $4$ real bitangents, see for example
       \cite[Theorem 3]{C94}.
       Combining this with Klein Formula \eqref{eq:klein} gives the result.
\end{remark}

\section{Perspectives}\label{sec:persp}
The results and techniques discussed in this article can be further
developed. We conclude by outlining some possible further research directions.

\begin{enumerate}
\item[(1)] Is the invariant $t_s$ sharp for the isotopy types depicted
  in Figure \ref{fig:sharp}?

\item[(2)] Determine $t_{max}(6)$.
  
\item[(3)] The value $t_{max}(6)\ge 318$ indicates that the lower
  bound on $t_{max}(d)$
  from Theorem \ref{thm:asym1}$(ii)$ can probably be improved. 

\item[(4)] What about bitangents of curves of odd degree in $\CP^2$? Does there
  exist any signed enumeration in this case? Can one locate  bitangents of
  a perturbation of a line and a multiple conic?
In the same vein, it could be interesting to generalize this work
  to
  singular curves, which together with their dual curve have cusps and nodes as their only singularities.

\item[(5)] One could also try to locate bitangents of a perturbation
  of a multiple curve of degree $l\ge 3$.
  For example, let $\C_0$ be a non-singular cubic in $\CP^2$, and $C$
   a general $k$-section of the normal bundle $N_{\C_0/\CP^2}$ that deforms
  to an algebraic curve $\C$ of degree $3k$ in $\CP^2$. Denote by
  $p_1,\ldots,p_{9k(k-1)}$ the points on $\C_0$ that correspond to
  fibers of $N_{\C_0/\CP^2}$ that are tangent to $C$.
Then, over $\CC$,  one can prove that:
  \begin{itemize}
  \item a line passing through two distinct $p_i$ deform to one
    bitangent of $\C$;
    \item a line tangent to $\C_0$ at $p_i$ deforms to $2(k-2)$
      bitangents of $\C$;
    \item
      a line tangent to $\C_0$ and passing through $p_i$ deforms to
      $k$ bitangents of $\C$ (there exist 4 of such lines for each
      $p_i$);
      \item a tangent line of $\C_0$ at an inflection point deforms to
        $3{{k}\choose{2}}$ bitangents of $\C$ (there exist 9 such
        inflexion points).
  \end{itemize}
  
\item[(6)] Using such degenerations to the normal cone of a cubic
  curve $\C_0$, one could also locate (at least partially) conics that are
  $5$-tangents to the perturbation of a multiple of $\C_0$. For
  example, any conic passing through a
  $5$-uple of distinct points $p_i$ as in item (5)
  deforms to a unique 5-tangent conic to $\C$. From this, one deduce
  easily the following.
  \begin{prop}\label{prop:5conic}
  There exists a sequence $(C_{k})_{d\ge 1}$ of real plane algebraic curves
of   degree $d=3k$  with
$\frac{d^{10}}5! +O(d^9)$ real 5-tangent conics
  (i.e. all 5-tangent conics of $C_k$ are asymptotically real).
  \end{prop}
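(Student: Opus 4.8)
The plan is to run the proof of Theorem~\ref{thm:asym2} with a cubic in place of a conic: degenerate to the normal cone of a non-singular real cubic $\C_0\subset\CP^2$, and replace bitangent lines by $5$-tangent conics. Let $C$ be a general real $k$-section of $N_{\C_0/\CP^2}$ deforming to a curve $\C=\C_k$ of degree $d=3k$ in $\CP^2$, and let $p_1,\dots,p_{9k(k-1)}\in\C_0$ be the points lying on the fibres of $N_{\C_0/\CP^2}$ tangent to $C$. By the degeneration analysis of item~(6) above, which is the analogue in this setting of Theorem~\ref{thm:bit}, any conic through $5$ of the $p_i$ deforms to a unique $5$-tangent conic of $\C$ whose five tangency points lie near the chosen $p_i$; in particular this conic is real as soon as the quintuple is globally invariant under complex conjugation, and distinct quintuples yield distinct $5$-tangent conics of $\C$ (for $t$ small the five tangency points recover the quintuple, provided no six of the $p_i$ lie on a conic). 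Hence the number of real $5$-tangent conics of $\C_k$ is at least the number of globally real $5$-element subsets of $\{p_1,\dots,p_{9k(k-1)}\}$, and it suffices to produce, for each $k$, a real $k$-section $C_k$ all of whose $9k(k-1)$ tangent fibres are real.

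Such a $C_k$ is built by a Brusotti smoothing, exactly as in Proposition~\ref{prop:asymconstr2}. Since $\RR\C_0\neq\emptyset$ and $N_{\C_0/\CP^2}$ has degree $9$, one chooses real $1$-sections $s_1,\dots,s_k$ of $N_{\C_0/\CP^2}$ in general position and pairwise meeting in $9$ real points: the zero locus of the real section $s_i-s_j\in H^0(\C_0,N_{\C_0/\CP^2})$ can be taken to consist of $9$ distinct points of $\RR\C_0$. The reducible $k$-section $s_1\cup\dots\cup s_k$ then has $9\binom{k}{2}$ real nodes, none of them on a fibre and none of its branches tangent to a fibre. By Brusotti's Theorem it smooths into a non-singular real $k$-section $C_k$, and one chooses at each node the smoothing that reconnects the two branches ``across the fibre direction'': as in the proof of Proposition~\ref{prop:asymconstr1}, this local picture $(v-m_1u)(v-m_2u)=\varepsilon$ with $\varepsilon<0$ produces two real tangencies with a fibre near each node, so that all $2\cdot 9\binom{k}{2}=9k(k-1)$ tangent fibres of $C_k$ are real. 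For generic data one may further assume that $C_k$ is in general position with respect to $N_{\C_0/\CP^2}\to\C_0$, that no three of the $p_i$ are collinear, and that no six of them lie on a conic; then each of the $\binom{9k(k-1)}{5}$ quintuples spans a unique smooth conic and these deform to $\binom{9k(k-1)}{5}$ pairwise distinct real $5$-tangent conics of $\C_k$.

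It remains to read off the asymptotics. One computes
\[
\binom{9k(k-1)}{5}=\frac{(9k^2)^5}{5!}+O(k^9)=\frac{(3k)^{10}}{5!}+O(k^9)=\frac{d^{10}}{5!}+O(d^9),
\]
using $(3k)^{10}=3^{10}k^{10}=9^5k^{10}$, which already gives the announced lower bound on the number of real $5$-tangent conics of $C_k$. For the parenthetical assertion that asymptotically all $5$-tangent conics of $C_k$ are real, one invokes that the number $N(d)$ of conics $5$-tangent to a generic plane curve of degree $d$ is a polynomial in $d$ with leading term $\tfrac{d^{10}}{5!}$ --- classical, and also recoverable from the present degeneration via conservation of number, since every limiting configuration other than a conic through $5$ of the $p_i$ involves at most $4$ of the $p_i$ together with bounded data attached to $\C_0$ alone (tangent lines of $\C_0$ through the $p_i$, tangent lines at the nine inflection points of $\C_0$, etc., in the spirit of items~(5)--(6)), hence contributes $O(k^{8})=O(d^{9})$. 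Combining with the lower bound shows that $C_k$ has $\tfrac{d^{10}}{5!}+O(d^9)$ real $5$-tangent conics and that all but $O(d^9)$ of its $5$-tangent conics are real.

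The main obstacle is the enumerative input carried over from item~(6): proving, in the style of the proof of Theorem~\ref{thm:bit}, that the multiple cubic $k\C_0$ deforms so that each quintuple of the $p_i$ contributes exactly one $5$-tangent conic of $\C_t$. Unlike the bitangent case, this requires tracking, as $t\to 0$, the subvariety of the $\PP^5$ of conics consisting of conics $5$-tangent to $\C_t$ --- rather than merely a dual curve in $(\CP^2)^\vee$ --- and identifying its limit and the deformation of each of its points; and, for the ``asymptotically real'' clause, enumerating all degenerate limit configurations precisely enough to see they are $O(d^9)$. By comparison the Brusotti smoothing-sign bookkeeping guaranteeing that each node of $s_1\cup\dots\cup s_k$ contributes two real tangent fibres is routine, being word for word the argument used in Propositions~\ref{prop:asymconstr1} and~\ref{prop:asymconstr2}.
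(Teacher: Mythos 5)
Your proposal is correct and follows essentially the same route as the paper: the paper deduces Proposition~\ref{prop:5conic} directly from the degeneration statement of item~(6) (conics through $5$ of the $9k(k-1)$ points $p_i$ deform to distinct $5$-tangent conics), combined with a $k$-section of $N_{\C_0/\CP^2}$ all of whose tangent fibres are real, and the count $\binom{9k(k-1)}{5}=\frac{(3k)^{10}}{5!}+O(k^9)$. You have in fact supplied more detail than the paper does (the Brusotti smoothing making all tangent fibres real, and the comparison with Gathmann's total count), and you correctly identify that the only non-routine input --- the item~(6) analogue of Theorem~\ref{thm:bit} for $5$-tangent conics --- is asserted rather than proved in the paper as well.
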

  The number of $5$-tangent conics to a generic algebraic curve of
  degree $d$ in $\CP^2$ has been computed by Gathmann in \cite{Gat05}, to
  which we refer for more details about this problem.

\item[(7)] What about other fields? Is there a generalization of
  Theorem \ref{thm:signed} over any field $\mathbb K$ of
  characteristic 0, for
  compactly supported $\mathbb A^1$-Euler characteristic?
Furthermore analogously to Proposition \ref{prop:5conic}, 
  Theorem \ref{thm:bit} has an immediate weak version valid for any
  field $\mathbb K$ of characteristic not 2 (we use notations from Theorem
  \ref{thm:bit}): if $C$ is defined over $\mathbb K$,
  any line $L_{i,j}$ that is defined over $\mathbb K$ deforms to a bitangent
defined over $\mathbb K$ of $\C_t$, for $t\in\mathbb K$.
This provides a tool  to estimate the maximal number $t_{max,\mathbb K}(d)$ 
 of $\mathbb K$-bitangents that an algebraic curve
in $\mathbb KP^2$ may have.

\item[(8)] What about higher dimensions?
  Is there any generalization of Theorem \ref{thm:signed} to
   tritangent planes of real algebraic surfaces in $\CP^3$? To
  tritangent hyperplane sections of real algebraic curves on a real quadric in $\CP^3$?

\end{enumerate}

\bibliographystyle{plain}
\bibliography{biblio}

\begin{thebibliography}{10}

\bibitem{AMBOWZ22}
N.~Arcila-Maya, C.~Bethea, M.~Opie, K.~Wickelgren, and I.~Zakharevich.
\newblock Compactly supported {$\mathbb A^1$-E}uler characteristic and the
  {H}ochschild complex.
\newblock {\em Topology and Its Applications}, 316, 2022.

\bibitem{BLMPR16}
M.~Baker, Y.~Len, R.~Morrison, N.~Pflueger, and Q.~Ren.
\newblock Bitangents of tropical plane quartic curves.
\newblock {\em Math. Z.}, 282(2-3):1017--1031, 2016.

\bibitem{Br3}
B.~Bertrand and E.~Brugallé.
\newblock A viro theorem without convexity hypothesis for trigonal curves.
\newblock {\em International Mathematics Research Notices}, 2006:Article ID
  87604, 33 pages, 2006.
\newblock doi:10.1155/IMRN/2006/87604.

\bibitem{BouCimTil23}
C.~Boutillier, D.~Cimasoni, and B.~de~Tilière.
\newblock Elliptic dimer models on minimal graphs and genus 1 {H}arnack curves.
\newblock {\em Communications in Mathematical Physics}, 400(2):1071--1136,
  2023.

\bibitem{BouCimTil23b}
C.~Boutillier, D.~Cimasoni, and B.~de~Tilière.
\newblock Minimal bipartite dimers and higher genus {H}arnack curves.
\newblock {\em Probability and Mathematical Physics}, 4(1):151--208, 2023.

\bibitem{Bru11}
E.~Brugall\'e.
\newblock Construction of {$7 \sqcup 1<1<11>>$} in degree 8, 2011.
\newblock
  \url{http://erwan.brugalle.perso.math.cnrs.fr/articles/7_1_11/M-2_71111.pdf}.

\bibitem{Bru14b}
E.~Brugall\'e.
\newblock Pseudoholomorphic simple {H}arnack curves.
\newblock {\em L'Enseignement Mathématique}, 61(3-4):483--498, 2015.

\bibitem{Br28}
E.~Brugall\'e and O.~Labs.
\newblock Surfaces with many solitary points.
\newblock {\em Geom. Dedicata}, 141:123--134, 2009.

\bibitem{CapSer03}
L.~Caporaso and E.~Sernesi.
\newblock Recovering plane curves from their bitangents.
\newblock {\em J. Algebr. Geom.}, 12(2):225--244, 2003.

\bibitem{C94}
S.~E. Cappell, J.~E. Goodman, J.~Pach, R.~Pollack, and Et. Al.
\newblock Common tangents and common transversals.
\newblock {\em Adv. Math.}, 106(2):198--215, 1994.

\bibitem{ChaJir17}
M.~Chan and P.~Jiradilok.
\newblock Theta characteristics of tropical {$K_4$}-curves.
\newblock In {\em Combinatorial algebraic geometry}, volume~80 of {\em Fields
  Inst. Commun.} Fields Inst. Res. Math. Sci., Toronto, ON, 2017.

\bibitem{Che}
B.~Chevallier.
\newblock Four {$M$}-curves of degree 8.
\newblock {\em Funct. Anal. Appl.}, 36(1):76--78, 2002.

\bibitem{Coo31}
J.L. Coolidge.
\newblock {\em Algebraic plane curves}.
\newblock Oxford University Press, Oxford, 1931.

\bibitem{CueMar23}
M.~A. Cueto and H.~Markwig.
\newblock Combinatorics and real lifting of bitangents to tropical quartic
  curves.
\newblock {\em Discrete Comput, Geom.}, 96(3):597--658, 2023.

\bibitem{DK}
A.~I. Degtyarev and V.~M. Kharlamov.
\newblock Topological properties of real algebraic varieties: {R}okhlin's way.
\newblock {\em Russian Math. Surveys}, 55(4):735--814, 2000.

\bibitem{Edge94}
W.~L. Edge.
\newblock 28 real bitangents.
\newblock {\em Proc. R. Soc. Edinb.}, Sect. A 124(4):729--736, 1994.

\bibitem{FinKha15}
S.~Finashin and V.~Kharlamov.
\newblock Apparent contours of nonsingular real cubic surfaces.
\newblock {\em Transactions of the AMS}, 367(10):17221--7289, 2010.

\bibitem{Fog68}
J.~Fogarty.
\newblock Algebraic families on an algebraic surface.
\newblock {\em Amer. J. Math.}, 90:511--521, 1968.

\bibitem{Ful}
W.~Fulton.
\newblock {\em Introduction to toric varieties}, volume 131 of {\em Ann. Math.
  Studies}.
\newblock Princeton Univ. Press., 1993.

\bibitem{Gat05}
A.~Gathmann.
\newblock The number of plane conics 5-fold tangent to a given curve.
\newblock {\em Compositio Mathematica}, 141:487--501, 2005.

\bibitem{GeiPan21}
A.~Geiger and M.~Panizzut.
\newblock A tropical count of real bitangents to plane quartic curves.
\newblock arXiv:2112.04433, 2021.

\bibitem{GrHa83}
B.~H. Gross and J.~Harris.
\newblock Real algebraic curves.
\newblock {\em Ann. Sci. \'{E}cole Norm. Sup. (4)}, 14(2):157--182, 1981.

\bibitem{Gud}
D.~A. Gudkov.
\newblock Complete topological classification of the disposition of ovals of a
  sixth order curve in the projective plane.
\newblock {\em Gor'kov. Gos. Univ. U\v cen. Zap. Vyp.}, 87:118--153, 1969.

\bibitem{Har}
A.~Harnack.
\newblock {\"U}ber vieltheiligkeit der ebenen algebraischen curven.
\newblock {\em Math. Ann.}, 10:189--199, 1876.
\newblock (German).

\bibitem{IS}
I.~Itenberg and E.~Shustin.
\newblock Combinatorial patchworking of real pseudo-holomorphic curves.
\newblock {\em Turkish J. Math.}, 26(1):27--51, 2002.

\bibitem{IV2}
I.~Itenberg and O.~Ya. Viro.
\newblock Patchworking algebraic curves disproves the {R}agsdale conjecture.
\newblock {\em Math. Intelligencer}, 18(4):19--28, 1996.

\bibitem{KenOko06}
R.~Kenyon and A.~Okounkov.
\newblock Planar dimers and {H}arnack curves.
\newblock {\em Duke Math. J.}, 131(3):499--524, 2006.

\bibitem{KhaRas13}
V.~Kharlamov and R.~R{\u{a}}sdeaconu.
\newblock Counting real rational curves on {$K$}3 surfaces.
\newblock {\em Int. Math. Res. Not.}, (14):5436--5455, 2015.

\bibitem{Kle2}
F~Klein.
\newblock Eine neue {R}elation zwischen den {S}ingularit\"aten einer
  algebraischen {C}urve.
\newblock {\em Math. Ann.}, 10(2):199--209, 1876.

\bibitem{kool2011short}
M.~Kool, V.~Shende, and R.P. Thomas.
\newblock A short proof of the {G}{\"o}ttsche conjecture.
\newblock {\em Geometry \& Topology}, 15(1):397--406, 2011.

\bibitem{LarVog21}
H.~Larson and I.~Vogt.
\newblock An enriched count of the bitangents to a smooth plane quartic curve.
\newblock {\em Res. Math. Sci.}, 8(26), 2021.

\bibitem{lee2012note}
H.~Lee.
\newblock A note on the hilbert scheme of points on a cusp curve.
\newblock {\em International Journal of Algebra and Computation},
  22(03):1250025, 2012.

\bibitem{LeeLen18}
H.~Lee and Y.~Len.
\newblock Bitangents of non-smooth tropical quartics.
\newblock {\em Port. Math.}, 75(1):67--78, 2018.

\bibitem{LenMar20}
Y.~Len and H.~Markwig.
\newblock Lifting tropical bitangents.
\newblock {\em J. Symbolic Comput.}, (96-122), 2020.

\bibitem{Lop}
L.~Lopez~de Medrano.
\newblock Courbure totale des variétés algébriques réelles projectives.
\newblock Th\`ese doctorale (French), 2006.

\bibitem{McPher74}
R.~MacPherson.
\newblock Chern classes for singular algebraic varieties.
\newblock {\em Ann. Math.}, 100:423--432, 1974.

\bibitem{Mang17}
F.~Mangolte.
\newblock {\em Vari\'et\'es alg\'ebriques r\'eelles}, volume~24 of {\em Cours
  Sp\'ecialis\'es}.
\newblock Soci\'et\'e Math\'ematique de France, Paris, 2017.

\bibitem{MPS23b}
H.~Markwig, S.~Payne, and K.~Shaw.
\newblock Avoidance loci and tropicalizations of real bitangents to plane
  quartics.
\newblock {\em Proceedings of the Royal Society of Edinburgh Section A
  Mathematics}, 2023.

\bibitem{MPS23}
H.~Markwig, S.~Payne, and K.~Shaw.
\newblock Bitangents to plane quartics via tropical geometry: rationality,
  {$ \mathbb A^1$}-enumeration and real signed count.
\newblock {\em Res. Math. Sci.}, 10(2), 2023.

\bibitem{Mik11}
G.~Mikhalkin.
\newblock Real algebraic curves, the moment map and amoebas.
\newblock {\em Ann. of Math. (2)}, 151(1):309--326, 2000.

\bibitem{MikRul01}
G.~Mikhalkin and H.~Rullg{\aa}rd.
\newblock Amoebas of maximal area.
\newblock {\em Internat. Math. Res. Notices}, (9):441--451, 2001.

\bibitem{NKPNS19}
K.~Nidhi, M.~Kummer, D.~Plaumann, M.~S. Namin, and B.~Sturmfels.
\newblock Sixty-four curves of degree six.
\newblock {\em Exp. Math.}, 28(2):132--150, 2019.

\bibitem{Nik}
V.~V. Nikulin.
\newblock Integer symmetric bilinear forms and some of their geometric
  applications.
\newblock {\em Izv. Akad. Nauk SSSR Ser. Mat.}, 43(1):111--177, 238, 1979.

\bibitem{O1}
S.~Yu. Orevkov.
\newblock Link theory and oval arrangements of real algebraic curves.
\newblock {\em Topology}, 38(4):779--810, 1999.

\bibitem{O2}
S.~Yu. Orevkov.
\newblock Classification of flexible {$M$}-curves of degree 8 up to isotopy.
\newblock {\em Geom. Funct. Anal.}, 12(4):723--755, 2002.

\bibitem{Ore6}
S.~Yu. Orevkov.
\newblock A new {$M$}-curve of degree 8.
\newblock {\em Funct. Anal. Appl.}, 36(3):247--249, 2002.

\bibitem{O3}
S.~Yu. Orevkov.
\newblock Riemann existence theorem and construction of real algebraic curves.
\newblock {\em Annales de la Facult\'e des Sciences de Toulouse},
  12(4):517--531, 2003.

\bibitem{Ore2}
S.~Yu. Orevkov.
\newblock Quasipositivity problem for 3-braids.
\newblock {\em Turkish J. Math.}, 28(1):89--93, 2004.

\bibitem{Peters10}
C.~Peters.
\newblock {\em {Motivic aspects of Hodge theory}}, volume~92.
\newblock New Delhi: Narosa Publishing House/Published for the Tata Institute
  of Fundamental Research, 2010.

\bibitem{PSV11}
D.~Plaumann, B.~Sturmfels, and C.~Vinzant.
\newblock Quartic curves and their bitangents.
\newblock {\em J. Symbolic Comput.}, 46(6):712--733, 2011.

\bibitem{Plu34}
J.~Pl{ü}cker.
\newblock Solution d’une question fondamentale concernant la théorie
  générale des courbes.
\newblock {\em Journal für die reine und angewandte Mathematik}, 12:111--114,
  1834.

\bibitem{Plu39}
J.~Pl{ü}cker.
\newblock {\em Theorie der algebraischen Curven: gegründet auf eine neue
  Behandlungsweise der analytischen Geometrie}.
\newblock Bonn: Adolph Marcus, 1839.

\bibitem{Rag}
V.~Ragsdale.
\newblock On the arrangement of the real branches of plane algebraic curves.
\newblock {\em Am. J. Math.}, 28:377--404, 1906.

\bibitem{ran2005note}
Z.~Ran.
\newblock A note on hilbert schemes of nodal curves.
\newblock {\em Journal of Algebra}, 292(2):429--446, 2005.

\bibitem{Ron1}
F.~Ronga.
\newblock Klein's paper on real flexes vindicated.
\newblock In {W}.~{P}awlucki B.~{J}akubczyk and {J}. {S}tasica, editors, {\em
  Singularities Symposium - {L}ojasiewicz 70}, volume~44. Banach Center
  Publications, 1998.

\bibitem{Schuh}
F.~Schuh.
\newblock An equation of reality for real and imaginary plane curves with
  higher singularities.
\newblock {\em Proc. section of sciences of the {Royal Academy of Amsterdam}},
  6:764--773, 1903-1904.

\bibitem{V1}
O.~Ya. Viro.
\newblock Gluing of plane real algebraic curves and constructions of curves of
  degrees {$6$} and {$7$}.
\newblock In {\em Topology (Leningrad, 1982)}, volume 1060 of {\em Lecture
  Notes in Math.}, pages 187--200. Springer, Berlin, 1984.

\bibitem{Viro-Euler}
O.~Ya. Viro.
\newblock Some integral calculus based on {E}uler characteristic.
\newblock In {\em Lecture Notes in Math.}, volume 1346, pages 127--138.
  Springer Verlag, 1988.

\bibitem{Wal96}
C.T.C. Wall.
\newblock Duality of real projective plane curves: {K}lein’s equation.
\newblock {\em Topology}, 35(2):355--362, 1996.

\bibitem{Wel1}
J.~Y. Welschinger.
\newblock Invariants of real symplectic 4-manifolds and lower bounds in real
  enumerative geometry.
\newblock {\em Invent. Math.}, 162(1):195--234, 2005.

\bibitem{Zeu74}
H.G. Zeuthen.
\newblock Sur les différentes formes des courbes planes du quatrième ordre.
\newblock {\em Math. Ann.}, 7:410--432, 1874.

\end{thebibliography}

\end{document}